\newcommand{\Z}{\ensuremath{\mathbb{Z}}}
\newcommand{\Q}{\ensuremath{\mathbb{Q}}}
\newcommand{\R}{\ensuremath{\mathbb{R}}}
\newcommand{\CC}{\ensuremath{\mathbb{C}}}
\newcommand{\Tr}{\operatorname{tr}}
\newcommand{\topwedge}{\ensuremath{\bigwedge^{\mathrm{max}}}}
\newcommand{\Gal}{\operatorname{Gal}}
\newcommand{\dd}{\mathop{}\!\mathrm{d}}
\newcommand{\Schw}{\ensuremath{\mathcal{S}}}	
\newcommand{\relg}[1]{\ensuremath{\underset{#1}{>}}}	
\newcommand{\relgeq}[1]{\ensuremath{\underset{#1}{\geq}}}	
\newcommand{\relgg}[1]{\ensuremath{\underset{#1}{\gg}}}		
\newcommand{\lrangle}[1]{\ensuremath{\langle #1 \rangle}}
\newcommand{\identity}{\ensuremath{\mathrm{id}}}
\newcommand{\Hom}{\operatorname{Hom}}
\newcommand{\End}{\operatorname{End}}
\newcommand{\rightiso}{\ensuremath{\stackrel{\sim}{\rightarrow}}}
\newcommand{\Ker}{\operatorname{ker}}
\newcommand{\dotimes}[1]{\ensuremath{\underset{#1}{\otimes}}}
\newcommand{\Lie}{\operatorname{Lie}}
\newcommand{\Spec}{\operatorname{Spec}}
\newcommand{\Gm}{\ensuremath{\mathbb{G}_\mathrm{m}}}
\newcommand{\Res}{\operatorname{Res}}
\newcommand{\utimes}[1]{\ensuremath{\overset{#1}{\times}}}
\newcommand{\dtimes}[1]{\ensuremath{\underset{#1}{\times}}}
\newcommand{\Supp}{\operatorname{Supp}}
\newcommand{\GL}{\operatorname{GL}}
\theoremstyle{plain}
\newtheorem{proposition}{Proposition}
\newtheorem{lemma}[proposition]{Lemma}
\newtheorem{theorem}[proposition]{Theorem}
\newtheorem{corollary}[proposition]{Corollary}
\theoremstyle{definition}
\newtheorem{definition}[proposition]{Definition}
\newtheorem{definition-theorem}[proposition]{Definition-Theorem}
\newtheorem{definition-proposition}[proposition]{Definition-Proposition}
\newtheorem{remark}[proposition]{Remark}
\newtheorem{hypothesis}[proposition]{Hypothesis}
\newtheorem{example}[proposition]{Example}
\theoremstyle{definition}
\theoremstyle{plain}
\numberwithin{equation}{section}
\numberwithin{proposition}{section}
\numberwithin{conj}{section}	
\renewcommand{\Re}{\operatorname{Re}}	
\renewcommand{\emptyset}{\ensuremath{\varnothing}}	
\title{Generalized zeta integrals on certain real prehomogeneous vector spaces}
\author{Wen-Wei Li}
\date{}
\begin{document}

\maketitle

\begin{abstract}
	Let $X$ be a real prehomogeneous vector space under a reductive group $G$, such that $X$ is an absolutely spherical $G$-variety with affine open orbit. We define local zeta integrals that involve the integration of Schwartz--Bruhat functions on $X$ against generalized matrix coefficients of admissible representations of $G(\mathbb{R})$, twisted by complex powers of relative invariants. We establish the convergence of these integrals in some range, the meromorphic continuation as well as a functional equation in terms of abstract $\gamma$-factors. This subsumes the Archimedean zeta integrals of Godement--Jacquet, those of Sato--Shintani (in the spherical case), and the previous works of Bopp--Rubenthaler. The proof of functional equations is based on Knop's results on Capelli operators.
\end{abstract}


{\scriptsize
\begin{tabular}{ll}
	\textbf{MSC (2010)} & 11S40; 11S90 43A85 \\
	\textbf{Keywords} & Zeta integrals, prehomogeneous vector spaces, Capelli operators
\end{tabular}}

\tableofcontents

\section{Introduction}\label{sec:intro}

\subsection{Main results}
Prehomogeneous vector spaces are a rich source of zeta integrals with meromorphic continuation and functional equation. The aim of this article is to extend the scope of this construction by incorporating generalized matrix coefficients of admissible representations of a connected reductive group over $\R$. Let us begin by summarizing the main Theorems of this article. To put things into context, we will discuss their relation to existing theories in the next subsection.

A \emph{reductive prehomogeneous vector space} over $\R$ is a triplet $(G, \rho, X)$ where $G$ is a connected reductive $\R$-group, $X \neq \{0\}$ is a finite-dimensional $\R$-vector space, and $\rho: G \to \GL(X)$ is a homomorphism of algebraic groups such that $X$ has a Zariski-dense open $G$-orbit $X^+$. By convention, $\GL(X)$ acts on the right of $X$, thus acts on the left of various spaces of functions on $X$. Suppose furthermore that $\partial X := X \smallsetminus X^+$ is a hypersurface. Then $\partial X$ is defined by $f_1 \cdots f_r = 0$ where $f_1, \ldots, f_r \in \R[X]$ are irreducible polynomials, unique up to $\R^\times$, called the \emph{basic relative invariants} under the $G$-action. We refer to \S\ref{sec:relative-invariants} or \cite{Sa89,Ki03} for generalities about prehomogeneous vector spaces.

Recall that a homogeneous $G$-space $X^+$ is called \emph{spherical}, also known as \emph{absolutely spherical}, if there is an open Borel orbit in $X^+_{\CC} := X^+ \times_{\R} \CC$ under $G_{\CC}$-action. Let $\mathbf{X}^*(G) := \Hom_{\text{alg.\ grp.} / \R}(G, \Gm)$. Our assumptions (Hypothesis \ref{hyp:PVS}) are:
\begin{compactenum}[(i)]
	\item $X^+$ is a spherical homogeneous $G$-space;
	\item $\partial X$ is a hypersurface, defined by $f_1 \cdots f_r = 0$ where $f_1, \ldots, f_r$ are basic relative invariants, with eigencharacters $\omega_1, \ldots, \omega_r \in \mathbf{X}^*(G)$.
\end{compactenum}

The reductive prehomogeneous vector spaces satisfying only (i) are called \emph{multiplicity-free spaces}. For irreducible $\rho$, they have been classified by V.\ Kac \cite{Kac80}. The general classification is done independently in \cite{BR96, Le98}. See also \cite{Kn98}.

Let $\mathbf{X}^*_\rho(G) \subset \mathbf{X}^*(G)$ denote the subgroup eigencharacters of rational relative invariants in $\R(X)$. It is known that $\mathbf{X}^*_\rho(G) = \bigoplus_{i=1}^r \Z \omega_i$. For any commutative ring $A$, set $\Lambda_A := \mathbf{X}^*_\rho(G) \otimes_{\Z} A$. For $\lambda = \sum_{i=1}^r \omega_i \otimes \lambda_i \in \Lambda_{\R}$, we write $\Re(\lambda) \relgg{X} 0$ to indicate $\lambda_i \gg 0$ for all $i$. We also write
\[ |f|^\lambda(x) := \prod_{i=1}^r |f_i(x)|^{\lambda_i}, \quad x \in X(\R). \]

It is convenient to employ the language of \emph{half-densities} on real manifolds; see \S\ref{sec:densities}. They are $C^\infty$-sections of a canonical line bundle $\mathcal{L}^{1/2}$ over the manifold, and can be thought as square roots of measures. Locally they can be represented as $f |\omega|^{1/2}$ where $f$ is a $C^\infty$-function and $\omega$ is a differential form of top degree. For example, given $\Omega \in \topwedge \check{X} \smallsetminus \{0\}$, we have the half-density $|\Omega|^{1/2}$ on $X(\R)$; it is translation-invariant, and varies by $|\det\rho|^{1/2}$ under $G(\R)$-action. The product of two half-densities is a density, whose integration makes sense.

Let $C^\infty(X^+)$ denote the Fréchet space of $C^\infty$-smooth densities. Likewise, we have the Fréchet space of Schwartz--Bruhat half-densities $\Schw(X)$, which equals $\Schw_0(X)|\Omega|^{1/2}$ by choosing $\Omega$, where $\Schw_0(X)$ is the scalar-valued Schwartz--Bruhat space. They are both smooth $G(\R)$-representations.

It turns out that our assumptions on $(G, \rho, X)$ passes to its dual (i.e.\ contragredient) $(G, \check{\rho}, \check{X})$, and $\mathbf{X}^*_\rho(G) = \mathbf{X}^*_{\check{\rho}}(G)$. Upon choosing an additive character $\psi$ of $\R$, one can define the Fourier transform $\mathcal{F}: \Schw(X) \rightiso \Schw(\check{X})$ of half-densities, in such a way that $\mathcal{F}$ is $G(\R)$-equivariant.

Our zeta integrals are associated with admissible representations of $G(\R)$. The natural formalism is that of SAF representations (smooth, admissible of moderate growth, Fréchet --- see \cite{BK14}), also known as Casselman--Wallach representations. The category of SAF representations is equivalent to that of Harish-Chandra $(\mathfrak{g}, K)$-modules by taking $K$-finite parts. For each SAF representation $\pi$ realized on a Fréchet space $V_\pi$, the $\CC$-vector space
\[ \mathcal{N}_\pi(X^+) := \Hom_{G(\R)}\left( \pi, C^\infty(X^+) \right) \]
is known to be finite-dimensional where we take the continuous and $G(\R)$-equivariant $\Hom$-space. For each vector $v \in V_\pi$ and $\eta \in \mathcal{N}_\pi(X^+)$, we call $\eta(v) \in \mathcal{N}_\pi(X^+)$ a \emph{generalized matrix coefficient} of $\pi$. It can be reduced to the usual scalar-valued generalized matrix coefficients by trivializing $\mathcal{L}^{1/2}$ on $X^+(\R)$ equivariantly (Lemma \ref{prop:avoid-density}).

The \emph{generalized zeta integrals} in question are
\[ Z_\lambda\left( \eta, v, \xi \right) := \int_{X^+(\R)} \eta(v) |f|^\lambda \xi, \]
where $\eta \in \mathcal{N}_\pi(X^+)$, $v \in V_\pi$, and $\xi \in \Schw(X)$. The goal of this article is to prove three basic properties of these integrals, in increasing level of difficulty:

\begin{description}
	\item[Convergence (Theorem \ref{prop:convergence})] The integral $Z_\lambda(\eta, v, \xi)$ converges for $\Re(\lambda) \relgeq{X} \kappa$ for some $\kappa \in \Lambda_{\R}$ depending only on $\pi$ and $(G, \rho, X)$, and it is jointly continuous in $(v, \xi)$ in that range.
	\item[Meromorphic continuation (Theorem \ref{prop:meromorphy})] $Z_\lambda(\eta, v, \xi)$ admits a meromorphic continuation to all $\lambda \in \Lambda_{\CC}$. To be precise, there exists a holomorphic function $L(\eta, \lambda)$ on $\Lambda_{\CC}$ for any given $\eta$, not identically zero, such that $LZ_\lambda(\eta, v, \xi) := L(\eta, \lambda) Z_\lambda(\eta, v, \xi)$ extends holomorphically to all $\lambda \in \Lambda_{\CC}$.
	\item[Functional equation (Theorem \ref{prop:LFE})] Fix an additive character $\psi$ and denote the integral for $(G, \check{\rho}, \check{X})$ as $\check{Z}_\lambda$. There is then a unique meromorphic family of $\CC$-linear maps $\gamma(\pi, \lambda): \mathcal{N}_\pi(\check{X}^+) \to \mathcal{N}_\pi(X^+)$, called the $\gamma$-factor, such that
	\[ \check{Z}_\lambda \left( \check{\eta}, v, \mathcal{F}\xi \right) = Z_\lambda\left( \gamma(\lambda, \pi)(\check{\eta}), v, \xi \right) \]
	for all $\check{\eta} \in \mathcal{N}_\pi(\check{X}^+)$, $v \in V_\pi$ and $\xi \in \Schw(X)$, where both sides are viewed as meromorphic families in $\lambda \in \Lambda_{\CC}$.
\end{description}

Moreover, one can obtain slightly more information on the ``denominator'' $L(\eta, \lambda)$, and describe the dependence of $\gamma(\lambda, \pi)$ on $\psi$; it turns out that the $\gamma$-factor, which is actually a linear transform, is generically invertible (Proposition \ref{prop:gamma-symmetry}). We refer to the cited Theorems for the precise statements.

Note that our formalism is non-trivial only when $\mathcal{N}_\pi(X^+) \neq \{0\}$; in other words, $\pi$ must be \emph{distinguished} by $X^+(\R)$. Distinguished representations and their generalized matrix coefficients are the main concerns of harmonic analysis on spherical varieties.

The same result hold for prehomogeneous vector spaces over $\CC$; see \S\ref{sec:cplx-case}.

\subsection{Background}
The prototype of zeta integrals in representation theory is Tate's thesis. His idea is to study the $L$-factors by integrating Schwartz--Bruhat functions against characters by embedding $F^\times$ in $F$, and then interpret the functional equation in terms of Fourier transform. There are at least two well-known extensions of Tate's theory, both fitting into our general scenario. We discuss only the local case $F = \R$.

\begin{enumerate}
	\item \emph{Godement--Jacquet theory} (Example \ref{eg:GJ}). Let $D$ be a central simple $\R$-algebra with $\dim D = n^2$, and let $D^\times \times D^\times$ act on the right of $X := D$ by $x\rho(g,h) = h^{-1}xg$. This gives a reductive prehomogeneous vector space $(D^\times \times D^\times, \rho, D)$ with open orbit $X^+ := D^\times$, which is spherical. The irreducible SAF representations $\pi$ of $D^\times(\R) \times D^\times(\R)$ with $\mathcal{N}_\pi(D^\times) \neq \{0\}$ are of the form $\sigma \boxtimes \check{\sigma}$, where $\check{\sigma}$ is the contragredient representation of $\sigma$. In this case, $\mathcal{N}_\pi(D^\times)$ is spanned by the \emph{matrix coefficient} map
	\[ v \otimes \check{v} \mapsto \lrangle{\check{v}, \sigma(\cdot) v} \cdot |\mathrm{Nrd}|^{-n/2} |\Omega|^{1/2} \]
	where $\mathrm{Nrd}$ is the reduced norm on $D$, and $\Omega$ is a volume form as before. Note that $\mathrm{Nrd} \in \R[D]$ is the basic relative invariant.

	Let $v \otimes \check{v} \in V_\sigma \otimes V_{\check{\sigma}}$. The Godement--Jacquet zeta integral in this setting is
	\[ Z^{\mathrm{GJ}}\left( \lambda, v \otimes \check{v}, \xi_0 \right) := \int_{D^\times(\R)} \lrangle{\check{v}, \pi(x)v} \left| \mathrm{Nrd}(x) \right|^{\lambda + \frac{n-1}{2}} \xi_0(x) \dd^\times x \]
	where $\dd^\times x := |\mathrm{Nrd}|^{-n/2} |\Omega|^{1/2}$ is a Haar measure on $D^\times(\R)$, and $\xi_0$ is any Schwartz--Bruhat function on $D(\R) \simeq \R^{n^2}$. It is routine to check that $Z^{\mathrm{GJ}}(\lambda + \frac{1}{2}, v \otimes \check{v}, \xi_0 )$ equals the generalized zeta integral introduced previously. Moreover, by relating $\check{X}^+$ to $X^+$ appropriately, we recover the Godement--Jacquet functional equation
	\[ Z^{\mathrm{GJ}}\left( 1-\lambda, \check{v} \otimes v, \mathcal{F}\xi_0 \right) = \gamma^{\mathrm{GJ}}(\lambda, \pi) Z^{\mathrm{GJ}}\left( \lambda, v \otimes \check{v}, \xi_0 \right), \]
	where the left hand side is defined with respect to $\check{\pi}$, and the self-dual Haar measure on $D(\R)$ is used. These integrals and their global avatar give rise to the standard $L$-factor $L(\lambda, \pi, \mathrm{Std})$, by taking the greatest common divisors over all $\xi_0$.
	
	\item \emph{Sato--Shintani theory} (Example \ref{eg:SS}). Consider a triplet $(G, \rho, X)$ as in our generalized setting, but take $\pi$ to be the trivial representation of $G(\R)$. Then $\mathcal{N}_\pi(X^+)$ is in bijection with the $G(\R)$-orbits $O_1, \ldots, O_m$ in $X^+(\R)$. The $G(\R)$-orbits on $\check{X}(\R)$ turn out to be in bijection with $O_1, \ldots, O_m$. The resulting zeta integral is, up to a shift in $\lambda$, the one considered by Sato--Shintani \cite{SS74} and completed by F.\ Sato \cite{Sa82} for the case in several variables, following the pioneering works of M.\ Sato on prehomogeneous vector spaces. They name the functional equation as the \emph{Fundamental Theorem}. The condition on sphericity of $X^+$ can be removed in this setting.
	
	Specifically, Sato and Shintani worked only in the global case; the local zeta integrals are introduced later by Igusa \textit{et al.} We refer to \cite{Sa89} for a more detailed survey for the local integrals, and to \cite{Sa94, Sai03} for the relation between local and global integrals. Note that the functional equation in the non-Archimedean case is known only under some assumptions on $(G, \rho, X)$; see \cite{Sa89}.
\end{enumerate}

In both theories the functional equation is the hardest part. We can make a further comparison as follows.
\begin{itemize}
	\item The Godement--Jacquet integrals are directly related to Langlands program since they yield standard $L$-factors; however the corresponding functional equation is proved by an \textit{ad hoc} argument, namely by reduction to Tate's thesis (see \cite{GH11-2}).

	\item On the other hand, Sato--Shintani functional equations are proved in \cite{SS74, Sa82} by a general, geometric reasoning. The corresponding $L$-factors, whenever they are identified, are highly degenerate; this is not surprising since the zeta integrals involve only twists of the trivial representation of $G(F)$. Their applicability to Langlands program is therefore limited, despite the flexibility of choosing $(G, \rho, X)$.
\end{itemize}

In \cite{LiLNM}, the author proposed a general framework to define zeta integrals whenever one has a spherical homogeneous $G$-space $X^+$, an equivariant embedding $X^+ \hookrightarrow X$ together with a reasonable notion of Schwartz space and Fourier transform. That project is largely speculative, the only accessible case being the setting of prehomogeneous vector spaces mentioned above. The belief behind \cite{LiLNM} is that the three basic properties of such zeta integrals over a local field $F$, namely: convergence, meromorphic continuation and functional equation, should have a uniform proof based on general principles. Moreover, we expect some global applications to the study of periods or sums (possibly infinite) of $L$-values, although this is surely a long-term goal. In this connection, we remark that Sakellaridis \cite{Sak12} made unramified computations for non-exceptional groups in Kac's classification and concluded that they give only ``known'' $L$-factors.

Generalized zeta integrals over $\R$ have also been studied in \cite{BR05} for a specific class of triplets $(G, \rho, X)$ and representations $\pi$. In particular, they obtained the functional equation via explicit computations, and obtained a more precise description of $\mathcal{N}_\pi(X^+)$ and $\gamma(\lambda, \pi)$. Another generalization in this direction is due to F.\ Sato \cite{Sa94, Sa06}, which puts more emphasis on the global picture involving periods of automorphic forms and allows some non-spherical cases. We hope to explore the possible extensions of our theory to his cases in the future.

When the local field $F$ is $p$-adic, $G$ is split and $X^+$ satisfies the \emph{wavefront} condition, some positive results about generalized zeta integrals have been obtained in \cite[Chapter 6]{LiLNM}, including a functional equation under extra assumptions on $\partial X$.

For the Archimedean case, say $F=\R$, the convergence and meromorphic continuation have been obtained in \cite{Li18} when $X^+$ is a finite cover of an algebraic symmetric space under $G$. Many of the arguments therein are general, requiring only some expected properties of generalized matrix coefficients as input. This article completes the Archimedean case in full generality.

\subsection{About the proofs}
Fix a maximal compact subgroup $K \subset G(\R)$ and a $G(\R)$-equivariant trivialization of the line bundle $\mathcal{L}^{1/2}$ on $X^+(\R)$, as in Lemma \ref{prop:avoid-density}. For every $\eta \in \mathcal{N}_\pi(X^+)$, we denote by $\eta_0$ the corresponding morphism from $\pi$ to $C^\infty(X^+(\R); \CC)$, the space of scalar-valued $C^\infty$ functions.

The convergence for $\Re(\lambda) \relgg{X} 0$ is the first and the simplest step. \textit{Grosso modo}, it suffices to show that $\eta_0(v)$ is of \emph{moderate growth} on $X^+(\R)$, uniformly in $v$; see Proposition \ref{prop:soft-bound}. The argument has been sketched in \cite[\S 6.6]{Li18}, but the proof of moderate growth therein for essentially symmetric spaces is unnecessarily complicated. By using available estimates for generalized matrix coefficients, for example those in \cite{KKS17} or \cite{Li19}, we are able to prove the general case here. Furthermore, we show that $(v, \xi) \mapsto Z_\lambda(\eta, v, \xi)$ is jointly continuous and bounded in vertical strips in the range of convergence.

The meromorphic continuation of $Z_\lambda$ is achieved by the machinery of Bernstein--Sato $b$-functions. The idea based on differential operators is explained in \cite[\S 6.8]{Li18} which is in turn modeled on \cite{BD92}; the technique has also been employed for Sato--Shintani zeta integrals. The main input is the fact that for each $v \in V_\pi^{K\text{-fini}}$, the $\mathscr{D}_{X^+_{\CC}}$-module generated by $\eta_0(v)$ is \emph{holonomic}, where $\mathscr{D}_{X^+_{\CC}}$ is the sheaf of algebraic differential operators on $X^+_{\CC}$. In fact, we will show that there is a holomorphic function $L(\eta, \lambda)$ in $\lambda \in \Lambda_{\CC}$, which can be taken to be a product of inverses of $\Gamma$-functions, such that
\[ LZ_\lambda(\eta, v, \xi) := L(\eta, \lambda) Z_\lambda(\eta, v, \xi) \]
extends holomorphically to all $\lambda \in \Lambda_{\CC}$. Moreover, $LZ_\lambda(\eta, \cdot, \cdot)$ extends to a jointly continuous bilinear form on $V_\pi \times \Schw(X)$.

The required holonomicity is furnished by \cite{Li19}, and one can also deduce it from the arguments in \cite{AGM16}. The remaining arguments are the same as in \cite{Li18}.

The hardcore is the functional equation. We proceed in two steps.
\begin{enumerate}
	\item First, we produce a uniquely determined meromorphic family of linear maps
	\[ \gamma(\pi, \lambda): \mathcal{N}_\pi(\check{X}^+) \to \mathcal{N}_\pi(X^+), \quad \lambda \in \Lambda_{\CC} \]
	verifying the \emph{weak functional equation}
	\[ \check{Z}_\lambda \left( \check{\eta}, v, \mathcal{F}\xi \right) = Z_\lambda\left( \gamma(\lambda, \pi)(\check{\eta}), v, \xi \right), \quad \xi \in C^\infty_c(X^+), \; v \in V_\pi. \]
	Surely, here $C^\infty_c(X^+)$ is valued in half-densities. The idea is simple: given $v \in V_\pi^{K\text{-fini}}$, regard $\xi \mapsto L\check{Z}_\lambda(\check{\eta}, v, \mathcal{F}\xi)$ as a tempered distribution $T_\lambda(v)$ on $X(\R)$. One shows that it is $\mathcal{Z}(\mathfrak{g})$-finite and $K$-finite as $v$ is, and deduce it is $C^\infty$ on $X^+(\R)$ by the elliptic regularity theorem. Next, one shows that $v \mapsto T_\lambda(v)|f|^{-\lambda}$ extends to a holomorphic family in $\mathcal{N}_\pi(X^+)$; this yields the $\gamma$-factor after dividing by the ``denominator'' $L(\check{\eta}, \lambda)$.
	
	This step involves some finiteness properties, as well as an automatic continuity property for $v \mapsto T_\lambda(v)|f|^{-\lambda}$. For this purpose, we invoke some results from \cite{Li19}, although these ingredients have probably been known elsewhere. Another ingredient is the decomposition of $X^+(\R)$ in Proposition \ref{prop:rho-decomposition} and the accompanying Proposition \ref{prop:N-restriction}, which enter in the proof of holomorphy of $T_\lambda(\cdot)|f|^{-\lambda}$.
	
	\item Secondly, let $\lambda$ vary in a bounded open subset $U \subset \Lambda_{\CC}$. Observe that $\Delta_\lambda(\check{\eta}, v, \xi) := \check{Z}_\lambda \left( \check{\eta}, v, \mathcal{F}\xi \right) - Z_\lambda\left( \gamma(\lambda, \pi)(\check{\eta}), v, \xi \right)$ satisfies
	\[ \Delta_\lambda(\check{\eta}, v, h^M \xi) = 0, \quad \xi \in \Schw(X), \]
	where $h \in \R[X]$ is an appropriate relative invariant with zero locus $\partial X$ and $M \gg 0$. Using the uniqueness of $\gamma$-factors in the weak functional equation, we transform this equality into $\Delta_{\lambda - M\theta}\left( {}^\lambda \check{\eta}, v, \xi \right) = 0$ for all $\xi \in \Schw(X)$, where
	\begin{compactitem}
		\item $\theta$ is the eigencharacter of $h$,
		\item $\check{\eta} \mapsto {}^\lambda \check{\eta}$ is a holomorphic family of endomorphisms of $\mathcal{N}_\pi(\check{X}^+)$, given by the action of some (analytic) twists of a $G$-invariant algebraic differential operator on $\check{X}$, called the \emph{Capelli operator}.
	\end{compactitem}

	The functional equation will follow once $\check{\eta} \mapsto {}^\lambda \check{\eta}$ is shown to be generically invertible. We do this by first decomposing $\mathcal{N}_\pi(\check{X}^+)$ into generalized eigenspaces under $\mathcal{D}(\check{X}^+_{\CC})^{G_{\CC}}$, the algebra of invariant algebraic differential operators on $\check{X}^+_{\CC}$. Then we analyze the eigenvalues of the twists of Capelli operator via Knop's Harish-Chandra isomorphism \cite{Kn94, Kn98}. Eventually, the generic invertibility results from Knop's formula in \cite{Kn98} for the leading term. Here we make crucial use of the existence of non-degenerate relative invariants of our prehomogeneous vector spaces.
\end{enumerate}

The arguments above are completely disjoint from the Godement--Jacquet case. When $\pi$ is the trivial representation, it reduces to the proof of Sato--Shintani and F.\ Sato, in which case the effect of Capelli operator can be made explicit.

We also remark that the sphericity of $X^+_{\CC}$ is necessary for both the proofs of meromorphy and functional equation. In contrast, the convergence holds when $X^+$ is just \emph{real spherical}, i.e.\ when there is an open $P_0$-orbit in $X^+$ where $P_0 \subset G$ is a minimal parabolic subgroup.

\subsection{Organization of this article}
The general conventions are presented in \S\ref{sec:conventions}.

In \S\ref{sec:PVS}, we introduce the basic notions about prehomogeneous vector spaces, density bundles, the action of differential operators on half-densities, and the Fourier transform for both the scalar and half-density cases. In particular, we enunciate the Hypothesis \ref{hyp:PVS} about the prehomogeneous vector spaces.

In \S\ref{sec:desiderata}, we define the generalized matrix coefficients of an SAF representation of $G(\R)$, define the generalized zeta integrals $Z_\lambda(\eta, v, \xi)$ and state the main Theorems \ref{prop:convergence}, \ref{prop:meromorphy}, \ref{prop:LFE}. Granting these results, we also describe the inverse of $\gamma$-factor and its dependence on $\psi$ in Proposition \ref{prop:gamma-symmetry}. Note that there is a self-dual version of Fourier transform and $\gamma$-factors, which are more natural in some circumstances, and will be discussed in Remark \ref{rem:sd-version}.

In \S\ref{sec:examples}, the zeta integrals of Godement--Jacquet and Sato--Shintani (in the local, spherical case), together with their functional equations, are shown to be special cases of our formalism. In \S\ref{sec:cplx-case}, we state the complex case and reduce it to the real case by restriction of scalars.

The convergence of $Z_\lambda$ for $\Re(\lambda) \relgg{X} 0$ and the meromorphic continuation are proved in \S\ref{sec:conv-mero}. The section also records some auxiliary results for later use.

The intermezzo \S\ref{sec:diff-op} is mainly a recap of Knop's Harish-Chandra isomorphism for multiplicity-free spaces over $\CC$. In that section, we will also define the relevant Capelli operators and their twists, in both the algebraic and analytic setting. The upshot is the crucial computation of leading terms in Propositions \ref{prop:HC-twist}, \ref{prop:top-nonvanishing}.

The functional equation is established in \S\ref{sec:FE} by proving first a weak functional equation in \S\ref{sec:gamma}, which also determines the $\gamma$-factor. Then we deduce the full version by using Capelli operators and their twists.

\subsection{Acknowledgements}
The author is grateful to Miyu Suzuki and Satoshi Wakatsuki for discussions on the works \cite{Sa94, Sa06} of F.\ Sato. This work is supported by NSFC-11922101.

\subsection{Conventions}\label{sec:conventions}
\paragraph*{Fields}
Field extensions are written in the form $E|F$. The Galois group of a Galois extension $E|F$ will be denoted by $\Gal(E|F)$.

The additive characters of $\R$ are nontrivial continuous homomorphisms $\psi: \R \to \{z \in \CC^\times: |z|=1 \}$. The additive characters form an $\R^\times$-torsor under the action $\psi \xmapsto{a} [\psi_a: t \mapsto \psi(at)]$ where $a \in \R^\times$.

\paragraph*{Varieties and groups}
Let $F$ be a field. By an $F$-variety we mean an integral separated scheme of finite type over $\Spec F$. If $X$ is an $F$-variety and $E|F$ is a field extension, we write $X_E := X \dtimes{F} E$. The set of $E$-points is denoted by $X(E)$, which carries a topology when $E$ is a local field. The $F$-algebra (resp.\ field) of regular functions (resp.\ rational functions) on $X$ is denoted by $F[X]$ (resp.\ $F(X)$).

Unless otherwise specified, algebraic groups act on varieties on the right, and act on the left of function spaces by $\varphi \mapsto [g\varphi: x \mapsto \varphi(xg)]$. In particular, for any finite-dimensional $F$-vector space, we let $\GL(X)$ act on the right of $X$, although the scalar multiplication by $F$ is still on the left of $X$. The dual of a finite-dimensional vector space $X$ is denoted by $\check{X}$. If $\rho: G \to \GL(X)$ is a representation on $X$, its contragredient $\check{\rho}: G \to \GL(\check{X})$ is defined to render the canonical pairing $\lrangle{\cdot, \cdot}: \check{X} \times X \to F$ invariant.

Let $\Gm$ be the multiplicative $F$-group scheme. Let $G$ be a linear algebraic $F$-group where $F$ is any field. We set $\mathbf{X}^*(G) := \Hom_{\text{alg.grp} / F}(G, \Gm)$, which is an additive group. The derived subgroup of an algebraic group $G$ is denoted by $G_{\mathrm{der}}$. The center of $G$ is denoted by $Z_G$.

Suppose that $G$ is connected reductive and the variety $X$ is endowed with a $G$-action; we say $X$ is a \emph{$G$-variety}. We say a normal $G$-variety $X$ is \emph{spherical} if $X_{\overline{F}}$ has an open orbit under any Borel subgroup of $G_{\overline{F}}$; this is also known as \emph{absolutely sphericity} since we work over $\overline{F}$, an algebraic closure of $F$.

\paragraph*{Algebraic differential operators}
For a smooth variety $X$ over a field $F$ with characteristic zero, $\mathscr{D}_X$ will denote the Zariski sheaf of algebraic differential operators on $X$. The formation of $\mathscr{D}_X$ commutes with arbitrary field extensions $E|F$. Since we will mainly work with affine $X$, it is customary to consider the algebra $\mathcal{D}(X) = \Gamma(X, \mathscr{D}_X)$ of algebraic differential operators on $X$.

For more backgrounds about algebraic differential operators, we refer to \cite[\S 1.1]{BB93}.

If $X$ is a $G$-variety where $G$ is an algebraic group, then $G$ acts on $\mathcal{D}(X)$ by transport of structure, written as $D \xmapsto{g} {}^g D = gDg^{-1}$.

\paragraph*{Analysis}
The topological vector spaces are always over $\CC$ and locally convex. For a topological vector space $V$, we denote by $V^\vee := \Hom_{\text{cont}}(V, \CC)$ its continuous dual.

The space of jointly continuous bilinear forms on $V_1 \times V_2$ is denoted by $\mathrm{Bil}(V, W)$ where $V, W$ are topological vector spaces; see \cite[\S 41]{Tr67}.

Let $\Omega$ be a connected complex manifold and $V$ be a topological vector space. For a map of the form $Z: \Omega \to V^\vee$, written as $\lambda \mapsto Z_\lambda$, we say $Z$ is holomorphic if so is $\lambda \mapsto Z_\lambda(v)$ for each $v \in V$. Now suppose that $T$ is only defined off a nowhere-dense subset of $\Omega$. We say that $T$ is meromorphic if locally on $\Omega$ there exists a holomorphic function $L$, not identically zero, such that $\lambda \mapsto L(\lambda)T_\lambda$ is holomorphic. Two meromorphic families on $\Omega$ are identified if they agree off a nowhere-dense subset.

Let $R$ be an open subset of $\R^n$ for some $n$. A holomorphic function $f: R \times i\R \to \CC$ is said to be bounded on vertical strips if for each compact $C \subset R$, the restriction of $f$ to $C \times i\R$ is bounded.

The space of scalar-valued Schwartz--Bruhat functions on $X$ is denoted by $\Schw_0(X)$. Our conventions on Fourier transforms will be explained in \S\ref{sec:Fourier}; the version for half-densities will also be introduced.

\paragraph*{Representations}
When a group $H$ acts on some space $V$, we denote by $V^H$ the subspace of $G$-invariants in $V$.

Let $G$ be a connected reductive $\R$-group. Unless otherwise specified, the representations of $G(\R)$ are taken over $\CC$ and are tacitly assumed to be continuous. The representations under consideration in this article are mainly the SAF representations, also known as Casselman--Wallach representations; see \cite[p.46]{BK14}. We will also consider the smooth representations of $G(\R)$, for which we refer to \cite[\S 1]{Ca89} for the basic definitions.

Suppose that $\pi$ is such a representation of $G(\R)$. The central character of $\pi$, if it exists, will be denoted as $\omega_\pi: Z_G(\R) \to \CC^\times$. The underlying topological $\CC$-vector space of $\pi$ will be denoted as $V_\pi$.

For $G$ as above, we let $\mathfrak{g} := \Lie G$ and write $\mathcal{Z}(\mathfrak{g})$ for the center of the enveloping algebra $U(\mathfrak{g})$. Therefore $U(\mathfrak{g})$ acts on $V_\pi$ for any smooth $G(\R)$-representation $\pi$.

Assume $\pi$ is an SAF representation. For any maximal compact subgroup $K \subset G(\R)$, the space $V_\pi^{K\text{-fini}}$ of $K$-finite vectors in $V_\pi$ form a $(\mathfrak{g}, K)$-module. 

\section{Prehomogeneous vector spaces}\label{sec:PVS}
\subsection{Relative invariants and regularity}\label{sec:relative-invariants}
We begin by reviewing the basic set-up about prehomogeneous vector spaces from \cite{Li18}; see also \cite[Chapter 6]{LiLNM} or \cite{Sa89, Ki03}. The following assumptions will remain in force throughout this article.

\begin{hypothesis}\label{hyp:PVS}
	Fix an additive character $\psi$ of $\R$. Let $G$ be a connected reductive $\R$-group, $X \neq \{0\}$ be a finite-dimensional $\R$-vector space and $\rho: G \to \GL(X)$ an algebraic homomorphism, through which $G$ acts on the right of $X$. Assume that
	\begin{itemize}
		\item there is a Zariski-open dense $G$-orbit in $X$, denoted hereafter as $X^+$;
		\item $\partial X := X \smallsetminus X^+$ is a hypersurface in $X$ (equivalently, $X^+$ is affine by \cite[Theorem 2.28]{Ki03});
		\item $X^+$ is a spherical homogeneous $G$-space, i.e.\ absolutely spherical by convention.
	\end{itemize}
\end{hypothesis}

Then $X^+(\R)$ is a union of finitely many $G(\R)$-orbits. The triplet $(G, \rho, X)$ forms a \emph{reductive prehomogeneous vector space} over $\R$. We say that a nonzero $f \in \R(X)$ is a \emph{relative invariant} if there exists $\omega \in \mathbf{X}^*(G)$ such that $f(xg) = \omega(g) f(x)$ for all $(x,g) \in X \times G$; the character $\omega$ is unique, called the eigencharacter of $f$.

Relative invariants on an arbitrary prehomogeneous vector space are automatically homogeneous, according to \cite[Corollary 2.7]{Ki03}.

If $f \in \R(X)$ is a relative invariant, then the logarithmic derivative $f^{-1} \dd f$ defines a $G$-equivariant morphism $X^+ \to \check{X}$. We say $f$ is \emph{non-degenerate} if $f^{-1} \dd f$ is dominant; see \cite[Definition 2.14]{Ki03}.

The general theory of prehomogeneous vector spaces affords the \emph{basic relative invariants} $f_1, \ldots, f_r \in \R[X]$, say with eigencharacters $\omega_1, \ldots, \omega_r \in \mathbf{X}^*(G)$ under $G$-action, which define irreducible codimension-one components of $\partial X$. Moreover,
\begin{align*}
	\mathbf{X}^*_\rho(G) & := \left\{ \omega \in \mathbf{X}^*(G) : \text{eigencharacter of some relative invariant}\right\} \\
	& = \bigoplus_{i=1}^r \Z \omega_i .
\end{align*}
It is known that $\{\omega_1, \ldots, \omega_r\}$ is uniquely determined, whereas the $f_i$ corresponding to $\omega_i$ is unique up to $\R^\times$. Call $\omega_1, \ldots, \omega_r$ the \emph{basic eigencharacters}. Every relative invariant is proportional to $f_1^{a_1} \cdots f_r^{a_r}$ for a unique $(a_1, \ldots, a_r) \in \Z^r$. The zero loci of basic relative invariants correspond to the irreducible components of $\partial X$. Therefore, $f_1^{a_1} \cdots f_r^{a_r} \in \R[X]$ if and only if $a_1, \ldots, a_r \geq 0$.

When $G$ is split, the facts above have been reviewed in \cite[\S 6.2]{LiLNM}. The general case follows by Galois descent from $\CC$ to $\R$; specifically, $\Gal(\CC|\R)$ permutes the irreducible components of $\partial X_{\CC}$ and
\[ \mathbf{X}^*_\rho(G) = \mathbf{X}^*_{\rho \otimes \CC}(G_{\CC})^{\Gal(\CC|\R)}. \]
Indeed, the only non-trivial part is that \textit{a priori}, to each $\omega \in \mathbf{X}^*_{\rho \otimes \CC}(G_{\CC})^{\Gal(\CC|\R)} \subset \mathbf{X}^*(G)$ corresponds only a relative invariant $f \in \CC(X)$ that is unique up to $\CC^\times$, but we may take $f \in \R(X)$ by the following technique: for every $\sigma \in \Gal(\CC|\R)$ let $c_\sigma \in \CC^\times$ be such that ${}^\sigma f = c_\sigma f$, so that $\sigma \mapsto c_\sigma$ is a $1$-cocycle; Hilbert's Theorem 90 then implies that there exists $c \in \CC^\times$ such that $cf$ is $\Gal(\CC|\R)$-invariant as desired. Hence $\omega \in \mathbf{X}^*_\rho(G)$.

According to \cite[Theorem 2.28]{Ki03} and our assumptions, $(G, \rho, X)$ is regular. Specifically,
\begin{itemize}
	\item $(\det \rho)^2 \in \mathbf{X}^*_\rho(G)$ and it corresponds to a non-degenerate relative invariant in $\R(X)$;
	\item the dual triplet $(G, \check{\rho}, \check{X})$ is regular prehomogeneous as well;
	\item $\mathbf{X}^*_{\rho}(G) = \mathbf{X}^*_{\check{\rho}}(G)$;
	\item every non-degenerate relative invariant $f \in \R(X)$ induces an isomorphism $f^{-1} \dd f: X^+ \rightiso \check{X}^+$ of homogeneous $G$-spaces (see \cite[Theorem 2.16]{Ki03}).
\end{itemize}
Again, for split $G$ these properties are reviewed in \cite[Theorem 6.2.4]{LiLNM}, and the general case follows by Galois descent. We summarize below.

\begin{proposition}\label{prop:dual-triplet}
	The dual triplet $(G, \check{\rho}, \check{X})$ also satisfies Hypothesis \ref{hyp:PVS}. We have $\mathbf{X}^*_{\rho}(G) = \mathbf{X}^*_{\check{\rho}}(G)$, and $X^+ \simeq \check{X}^+$ as homogeneous $G$-spaces.
\end{proposition}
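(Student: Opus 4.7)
The plan is to assemble Proposition \ref{prop:dual-triplet} from three ingredients, all sourced from the regularity theory of prehomogeneous vector spaces combined with Galois descent from $\CC$ to $\R$, along the lines sketched in the paragraphs immediately preceding the statement.

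First, I would invoke \cite[Theorem 2.28]{Ki03}: since $(G, \rho, X)$ is prehomogeneous with $\partial X$ a hypersurface, the triplet is automatically regular. Regularity produces two things I need in parallel: the dual triplet $(G, \check{\rho}, \check{X})$ is regular prehomogeneous (so in particular it has a Zariski-dense open $G$-orbit $\check{X}^+$, and $\partial \check{X}$ is a hypersurface), and there is a non-degenerate relative invariant $f \in \R(X)$ whose logarithmic derivative $f^{-1}\dd f$ defines a $G$-equivariant isomorphism $X^+ \rightiso \check{X}^+$ of smooth $G$-varieties. Strictly speaking, \cite{Ki03} is stated over $\CC$; the existence of such $f$ already over $\R$ is handled by the Hilbert 90 argument recalled in the excerpt, which descends relative invariants from $\CC(X)$ to $\R(X)$.

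Second, I would deduce sphericity of $\check{X}^+$ from the isomorphism. Since $f^{-1}\dd f: X^+_{\CC} \rightiso \check{X}^+_{\CC}$ is $G_{\CC}$-equivariant, any open Borel orbit in $X^+_{\CC}$ is transported to an open Borel orbit in $\check{X}^+_{\CC}$. Hence the sphericity hypothesis on $X^+$ passes to $\check{X}^+$. Combined with the two bullets obtained from regularity, this verifies Hypothesis \ref{hyp:PVS} for the dual triplet.

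Third, for the equality $\mathbf{X}^*_\rho(G) = \mathbf{X}^*_{\check{\rho}}(G)$, I would observe that pullback along $f^{-1}\dd f$ gives a bijection between the relative invariants of $(G, \check{\rho}, \check{X})$ and those of $(G, \rho, X)$ living on the open orbit, with eigencharacters preserved up to a sign that is absorbed into $\bigoplus_i \Z \omega_i$. Alternatively, one may simply observe that after extending scalars to $\CC$, the bullet-point decomposition $\mathbf{X}^*_\rho(G) = \bigoplus_{i=1}^r \Z \omega_i$ is intrinsic to the regular triplet and symmetric in $(\rho, \check{\rho})$, and then descend via $\Gal(\CC|\R)$ as already done in the text. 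I do not foresee a genuine obstacle here; the only point requiring care is to avoid circularity by taking the sphericity of the dual as a corollary of the isomorphism of homogeneous spaces rather than trying to verify it directly from a Borel-orbit analysis on $\check{X}^+$.
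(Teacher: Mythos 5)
Your proposal is correct and follows essentially the same route as the paper: both derive all three assertions from Kimura's regularity theorem \cite[Theorem 2.28]{Ki03}, the $G$-equivariant isomorphism $f^{-1}\dd f: X^+ \rightiso \check{X}^+$ induced by a non-degenerate relative invariant, and Galois descent from $\CC$ to $\R$ (the paper folds this into the bullet-list paragraph immediately preceding the proposition rather than a formal proof environment). One small clarification: pullback along the equivariant isomorphism preserves eigencharacters exactly, with no sign to absorb, so $\mathbf{X}^*_{\check\rho}(G)\subset\mathbf{X}^*_\rho(G)$ directly and equality follows by symmetry; the fact that the \emph{basic} eigencharacters of $\check{X}$ are $\omega_i^{-1}$ is a separate statement about polynomiality, recorded in Proposition \ref{prop:rel-invariant-opposite}, not about the lattice itself.
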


\begin{proposition}\label{prop:rel-invariant-opposite}
	The basic eigencharacters for $(G, \check{\rho}, \check{X})$ are $\omega_1^{-1}, \ldots, \omega_r^{-1}$.
\end{proposition}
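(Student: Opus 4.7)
The plan is to perform a local valuation analysis at each boundary divisor $D_k := \{f_k = 0\}$, combined with the identification $\mathbf{X}^*_{\rho}(G) = \mathbf{X}^*_{\check{\rho}}(G)$ already established in Proposition \ref{prop:dual-triplet}. Let $A = (A_{j,i}) \in \GL_r(\Z)$ encode the change of $\Z$-basis $\check{\omega}_j = \sum_i A_{j,i} \omega_i$, and $B := A^{-1}$ the inverse change $\omega_i = \sum_j B_{i,j} \check{\omega}_j$. The key claim I aim for is $A_{j,i} \leq 0$ for all $i,j$, and symmetrically $B_{i,j} \leq 0$. Granting this, $-A$ and $-B$ are non-negative integer matrices in $\GL_r(\Z)$ that are mutually inverse, hence permutation matrices. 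It follows that $\{\check{\omega}_j\}_j = \{-\omega_i\}_i = \{\omega_i^{-1}\}_i$, giving the statement.

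To establish $A_{j,i} \leq 0$, I would consider the $G$-equivariant rational map $\phi := f^{-1} df: X^+ \dashrightarrow \check{X}$ for $f := \prod_i f_i$. The pullback $\phi^* \check{f}_j \in \R(X)$ is a relative invariant of eigencharacter $\check{\omega}_j$, hence of the form $c_j \prod_i f_i^{A_{j,i}}$ (the exponents being forced by the matching of eigencharacters and the linear independence of $\{\omega_i\}$). A local expansion of $\phi$ at a generic point $x_0$ of $D_k$, using that only the factor $f_k$ vanishes there, yields $\phi(x) \sim \nabla f_k(x_0)/f_k(x)$ as $x \to x_0$: that is, $\phi$ has a simple pole along $D_k$ with leading direction $v_k := \nabla f_k(x_0) \in \check{X}$, which is nonzero since $D_k$ is smooth at $x_0$ generically. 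By homogeneity of $\check{f}_j$ of degree $\check{d}_j$, the order of vanishing of $\phi^* \check{f}_j$ along $D_k$ equals $-\check{d}_j + e_{j,k}$, where $e_{j,k}$ is the order of vanishing of $\check{f}_j$ at $v_k$. The elementary bound $e_{j,k} \leq \check{d}_j$ (the multiplicity of a polynomial of degree $d$ at any point is at most $d$) delivers $A_{j,k} \leq 0$. The symmetric construction $\check{\phi} := \check{f}^{-1} d\check{f}$ with $\check{f} := \prod_j \check{f}_j$ gives $B_{i,j} \leq 0$ by the same reasoning applied to the dual triplet.

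The chief obstacle is the local computation: one must carefully verify the claimed simple-pole behavior of $\phi$ along $D_k$, identify $v_k$ as a nonzero vector of $\check{X}$, and extract the leading-order valuation. An alternative route, parallel to the Galois descent already used to derive $\mathbf{X}^*_{\rho}(G) = \mathbf{X}^*_{\check{\rho}}(G)$ in the discussion preceding Proposition \ref{prop:dual-triplet}, is to invoke the split case as documented in \cite[Theorem 6.2.4]{LiLNM} over $\CC$ and descend to $\R$, using that the inversion $\omega \mapsto \omega^{-1}$ commutes with the $\Gal(\CC|\R)$-action on $\mathbf{X}^*(G_{\CC})$; the basic eigencharacters over $\R$ then correspond to Galois orbits on each side and are matched through inversion.
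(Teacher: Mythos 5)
Your overall reduction is clean and in spirit parallels the paper's: you reduce to showing $A_{j,i} \le 0$ and $B_{i,j} \le 0$, where $A = B^{-1}$ is the transition matrix between the two bases of $\mathbf{X}^*_\rho(G) = \mathbf{X}^*_{\check\rho}(G)$, and then observe that mutually inverse non-positive integer matrices in $\GL_r(\Z)$ must be negatives of permutation matrices. The paper phrases the same thing as $-\Lambda^+_\rho = \Lambda^+_{\check\rho}$ followed by an extremal-ray argument, but these are essentially the same bookkeeping. The difference lies entirely in how one establishes the sign condition, and that is where your argument has a gap.

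The paper obtains $-\Lambda^+_\rho \subset \Lambda^+_{\check\rho}$ (equivalently $B_{i,j}\le 0$) and its symmetric counterpart by citing Kimura \cite[Proposition 2.21]{Ki03} (over $\CC$, then descending by Hilbert 90). You instead attempt a direct local valuation computation, but the central step is not justified. Writing $\phi = f^{-1}\dd f$ and $\psi := f_k\phi$, so that $\psi$ is regular near a generic $x_0\in D_k$ with $\psi(x_0) = v_k := \nabla f_k(x_0)$, you assert that $\mathrm{ord}_{D_k}(\phi^*\check f_j) = -\check d_j + e_{j,k}$ with $e_{j,k}$ the multiplicity of $\check f_j$ at $v_k$. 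In fact one only gets $\mathrm{ord}_{D_k}(\check f_j\circ\psi) \ge e_{j,k}$ in general: if the image of $\psi$ is tangent to the zero locus of the degree-$e_{j,k}$ leading form of $\check f_j$ at $v_k$, the order of vanishing along $D_k$ strictly exceeds $e_{j,k}$. Since what you need is an \emph{upper} bound $\mathrm{ord}_{D_k}(\check f_j\circ\psi) \le \check d_j$, the elementary bound $e_{j,k}\le\check d_j$ does not deliver the conclusion; it bounds the wrong quantity. Worse, $v_k$ typically lies in $\partial\check X$ (in the Godement--Jacquet case $v_k$ is essentially an adjugate of a corank-one matrix, hence rank one), so $e_{j,k} > 0$ is the generic situation and the gap is not cosmetic. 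A secondary issue: $f = \prod_i f_i$ need not be non-degenerate, so $\phi$ is not guaranteed to send $X^+$ into $\check X^+$; if $\phi(X^+)\subset\partial\check X$, then $\phi^*\check f_j$ can vanish identically and the eigencharacter matching breaks down. One should use a non-degenerate relative invariant (e.g.\ one with eigencharacter $(\det\rho)^2$) rather than $\prod_i f_i$, though fixing this does not repair the valuation step. Your proposed alternative (descend the split case from the literature) is essentially what the paper does via Kimura's \cite[Proposition 2.21]{Ki03}, and is the route to take.
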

\begin{proof}
	Set $\Lambda^+_\rho := \sum_i \Z_{\geq 0} \omega_i$; a similar construction for $(G, \check{\rho}, \check{X})$ yields $\Lambda^+_{\check{\rho}}$. According to \cite[Proposition 2.21]{Ki03}, if $\omega \in \mathbf{X}^*_\rho(G)$ corresponds to a polynomial relative invariant for $X$, then so is $\omega^{-1}$ for $\check{X}$; the result is stated over $\CC$ in \text{loc.\ cit.}, but the case over $\R$ follows as explained earlier, by Hilbert's Theorem 90. Hence $- \Lambda^+_\rho \subset \Lambda^+_{\check{\rho}}$. By symmetry, $- \Lambda^+_\rho = \Lambda^+_{\check{\rho}}$.

	Let $V$ be the $\R$-vector space generated by the lattice $\mathbf{X}^*_\rho(G) = \mathbf{X}^*_{\check{\rho}}(G)$ of rank $r$. Both $\R_{\geq 0} \Lambda^+_\rho$ and $\R_{\geq 0} \Lambda^+_{\check{\rho}}$ are cones in $V$ generated by $r$ extremal rays (see eg.\ \cite[Proposition 1.20]{BG09}). The foregoing result implies that $\omega_1^{-1}, \ldots, \omega_r^{-1}$ generate the $r$ extremal rays of $\R_{\geq 0} \Lambda^+_{\check{\rho}}$. On the other hand, they are indivisible in $\mathbf{X}^*_\rho(G) = \mathbf{X}^*_{\check{\rho}}(G)$, hence they must be the minimal lattice points in these extremal rays, that is, the basic eigencharacters for $(G, \check{\rho}, \check{X})$.
\end{proof}

\begin{corollary}\label{prop:rel-invariant-pair}
	There exist non-degenerate polynomial relative invariants $f \in \R[X]$ and $\check{f} \in \R[\check{X}]$ such that
	\begin{enumerate}[(i)]
		\item $f, \check{f} \geq 0$ on $\R$-points;
		\item $\partial X = \{x: f(x) = 0 \}$ and $\partial \check{X} = \{ \check{x} : \check{f}(\check{x}) = 0 \}$;
		\item $f$, $\check{f}$ have opposite eigencharacters.
	\end{enumerate}
\end{corollary}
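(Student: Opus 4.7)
The plan is to take $f := (f_1\cdots f_r)^2$ and $\check f := (\check f_1\cdots \check f_r)^2$, where $f_1,\ldots,f_r$ (resp.\ $\check f_1,\ldots,\check f_r$) are the basic relative invariants of $(G,\rho,X)$ (resp.\ of $(G,\check\rho,\check X)$, which exist and enjoy the same properties by Proposition \ref{prop:dual-triplet}). Properties (i) and (ii) would then be immediate: each of $f, \check f$ is the square of a real polynomial, hence non-negative on real points, and its zero locus coincides with $\bigcup_i\{f_i=0\}=\partial X$ (resp.\ $\bigcup_i\{\check f_i=0\}=\partial\check X$).

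Property (iii) would follow at once from Proposition \ref{prop:rel-invariant-opposite}: the eigencharacter of $f$ is $\omega_1^{2}\cdots\omega_r^{2}$, while that of $\check f$ is $(\omega_1^{-1})^{2}\cdots(\omega_r^{-1})^{2}=\omega_1^{-2}\cdots\omega_r^{-2}$, which is manifestly its inverse.

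It remains to verify non-degeneracy. Since $f=(f_1\cdots f_r)^{2}$, one has
\[
  f^{-1}\,\mathrm{d}f \;=\; 2\,(f_1\cdots f_r)^{-1}\,\mathrm{d}(f_1\cdots f_r),
\]
and multiplication by $2$ is an automorphism of $\check X$, so $f$ is non-degenerate iff $f_1\cdots f_r$ is. Here I would invoke the characterization of regularity in \cite[Thm.~2.28 and its proof]{Ki03}: for a prehomogeneous vector space with $\partial X$ a hypersurface, regularity is equivalent to the non-degeneracy of the product of all basic relative invariants. Since $(G,\rho,X)$ has already been shown to be regular under Hypothesis \ref{hyp:PVS} (see the discussion preceding the corollary), this gives the non-degeneracy of $f_1\cdots f_r$, and hence of $f$. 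Exactly the same argument, applied to $(G,\check\rho,\check X)$ in view of Proposition \ref{prop:dual-triplet}, yields the non-degeneracy of $\check f$.

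The main (and essentially only) obstacle is the step invoking the Kimura criterion that $\prod f_i$ is itself non-degenerate in the regular case, rather than merely the existence of \emph{some} non-degenerate rational relative invariant. Should one prefer not to rely on this, an alternative route is: choose a non-degenerate $h\in\R(X)$ with eigencharacter $(\det\rho)^{2}$, write $h=c\prod f_i^{a_i}$ with $a_i\in\Z$, pick integers $N_i$ with $N_i+a_i\geq 1$, and set $g:=\prod f_i^{N_i}\cdot h$ and $f:=g^{2}$; then $f$ is polynomial, non-negative, with zero locus $\partial X$, and non-degeneracy follows from an open-density argument in the lattice $\mathbf X^*_\rho(G)$, using that the subset of dominant $G$-equivariant maps $X^+\to\check X$ of the form $\nabla\log\bigl(\prod f_i^{b_i}\bigr)$ is Zariski-open and non-empty.
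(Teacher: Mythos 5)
Your proposal is correct and matches the paper's construction in essence: the paper also takes polynomial relative invariants with eigencharacters $\prod_i\omega_i^{a_i}$ and $\prod_i\omega_i^{-a_i}$ for some $(a_i)\in\Z_{\geq 1}^r$ (invoking Proposition \ref{prop:rel-invariant-opposite} to see they are polynomials with the correct zero loci), then squares them to guarantee (i). Your particular choice $a_i=1$ is just a special case.

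Where you differ is in the care spent on non-degeneracy. The paper's proof is silent on it, implicitly relying on the regularity discussion preceding Proposition \ref{prop:dual-triplet} and the fact that, for a regular PVS, a polynomial relative invariant vanishing exactly on $\partial X$ is automatically non-degenerate. You are right that this is exactly the point needing a reference, and your caution is well placed: Kimura's Theorem 2.28 itself states that a reductive PVS with hypersurface singular set is regular, but the statement you actually want — that regularity forces non-degeneracy of \emph{any} relative invariant whose zero set is exactly $\partial X$, hence of $f_1\cdots f_r$ — is a separate standard fact in Kimura's \S 2.4 (around Propositions 2.19–2.22), not Theorem 2.28 per se. Your fallback argument (take a non-degenerate $h$ with eigencharacter $(\det\rho)^2$, clear denominators with $\prod f_i^{N_i}$, then use that dominance is a Zariski-open condition on the exponent lattice so a good positive choice exists) is sound and self-contained, and in fact explains what the paper's phrase ``take $(a_1,\ldots,a_r)\in\Z_{\geq 1}^r$'' leaves implicit. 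Either route is acceptable; just tighten the citation if you go with the first.
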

\begin{proof}
	Take $(a_1, \ldots, a_r) \in \Z_{\geq 1}^r$ and relative invariants $f$, $\check{f}$ with eigencharacters $\prod_{i=1}^r \omega_i^{a_i}$ and $\prod_{i=1}^r \omega_i^{-a_i}$. Proposition \ref{prop:rel-invariant-opposite} says that they are polynomials with zero loci $\partial X$ and $\partial \check{X}$, respectively. To ensure (i), one can replace $f$, $\check{f}$ by $f^2$, $\check{f}^2$.
\end{proof}

\subsection{Density bundles}\label{sec:densities}
Below is a review of the formalism of densities, following \cite[\S 3.1]{LiLNM}.

Let $Y$ be any real smooth manifold. Roughly speaking, the densities on $Y$ are objects which can be integrated. More generally, for each $t \in \R$ there is a real line bundle $\mathcal{L}^t$ of \emph{$t$-densities} on $Y$, with $\mathcal{L} := \mathcal{L}^1$. Denote by $C_c(Y, \mathcal{L}^t)$ the space of continuous sections of $\mathcal{L}^t$ over $Y$ of compact support. Likewise, we have the space $C^\infty(Y, \mathcal{L}^t)$ of $C^\infty$-sections of $\mathcal{L}^t$.

\begin{remark}
	Although $\mathcal{L}^t$ are real line bundles, we will mostly work with complex-valued sections and their integrations. We will also write $\mathcal{L}^t_Y$ to indicate the reference to $Y$.
\end{remark}

The line bundles $\mathcal{L}^t$ come with
\begin{compactitem}
	\item canonical pairings
	\[ \mathcal{L}^s \otimes \mathcal{L}^t \to \mathcal{L}^{s+t}, \quad s, t \in \R \]
	and a trivialization of $\mathcal{L}^0$, subject to the unity, associativity and commutativity constraints;
	\item the integration as a linear functional
	\[ \int_Y: C_c(Y, \mathcal{L}) \to \CC, \quad \xi \mapsto \int_Y \xi. \]
\end{compactitem}

These data can be constructed by the following recipe. Denote by $\Omega_Y$ the line bundle of differential $1$-forms on $Y$. To $\topwedge \Omega_Y$ corresponds the $\R^\times$-torsor $\mathcal{G}$ on $Y$, whose local sections are non-vanishing differential forms of top degree. Let $t \in \R$. Using the group homomorphism $|\cdot|^t: \R^\times \to \R^\times_{> 0} \subset \R$, we form the following real line bundle on $Y$:
\[ \mathcal{L}^t := \mathcal{G} \utimes{|\cdot|^t} \R. \]

Specifically, let $U \subset Y$ be an open subset and $\omega$ be a non-vanishing continuous section of $\topwedge \Omega_Y$ over $U$. It yields a section $|\omega|^t$ of $\mathcal{G} \utimes{|\cdot|^t} \R^\times_{> 0}$, whence a section of $\mathcal{L}^t$. In general, sections of $\topwedge \Omega_Y$ can be locally expressed as $f \omega$ where $f$ is a continuous function on $Y$; one defines unambiguously the section
\[ |f\omega|^t := |f| \cdot |\omega|^t  \]
of $\mathcal{L}^t$. We have $|\omega|^{s+t} = |\omega|^s |\omega|^t$, etc. The integration of $\xi \in C_c(Y, \mathcal{L})$ is then performed via local charts and partition of unity, reducing everything to Lebesgue integrals. In particular, every continuous section of $\mathcal{L}$ gives rise to a Radon measure on $Y$.

Consequently, it makes sense to define the $L^p$-space of sections of $\mathcal{L}^{1/p}$ as the completion of $C_c(Y, \mathcal{L}^{1/p})$ with respect to $\|\xi\|_{L^p} := \left( \int_Y |\omega|^p \right)^{1/p}$, where $1 \leq p \leq +\infty$.

Below are some further properties of $\mathcal{L}^t$.
\begin{itemize}
	\item Pull-back: this is compatible with the pull-back of differential forms. Given a morphism $\nu: Y \to Z$ and a section $\xi$ of $\mathcal{L}^t_Z$, we write $\nu^* \xi$ for the resulting section of $\mathcal{L}^t_Y$.
	
	For $t=0$ it is the pull-back of functions, and for any differential form $\omega$ of top degree on $Z$ we have $\nu^* |\omega|^t = |\nu^* \omega|^t$.
	\item For any open subset $U \subset Y$, we have $\mathcal{L}^t_Y |_U \simeq \mathcal{L}^t_U$; for any real analytic manifolds $Y_1, Y_2$, we have $\mathcal{L}^t_{Y_1 \times Y_2} \simeq \mathcal{L}^t_{Y_1} \boxtimes \mathcal{L}^t_{Y_2}$. Both isomorphisms are canonical.
	\item Integration of densities satisfies the formula of change of variables
	\[ \int_Y \nu^* \xi = \int_X \xi, \quad \xi \in C_c(Y, \mathcal{L}). \]
	\item If a Lie group $H$ acts on the right of $Y$, then the bundles $\mathcal{L}^t$ have canonical $H$-equivariant structures.
\end{itemize}

Now let $(G, \rho, X)$ as in Hypothesis \ref{hyp:PVS}. Every $\Omega \in \topwedge \check{X}$ affords a translation-invariant $t$-density $|\Omega|^t$. For every $g \in \GL(X)$ we have $g^* |\Omega|^t = |g^* \Omega|^t = |\det g|^t |\Omega|^t$. Most often, we will encounter the case $t = \frac{1}{2}$, i.e.\ the half-densities. If necessary, one can get rid of half-densities by the following observation.

\begin{lemma}\label{prop:avoid-density}
	Let $\phi \in \R(X)$ be a relative invariant with eigencharacter $(\det\rho)^2$, and let $\Omega \in \topwedge \check{X} \smallsetminus \{0\}$. Then $|\phi|^{-1/4} |\Omega|^{1/2}$ is a $G(\R)$-invariant and nowhere vanishing half-density over $X^+(\R)$. Consequently, $\mathcal{L}^{1/2}$ can be equivariantly trivialized over $X^+(\R)$.
\end{lemma}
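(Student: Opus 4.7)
The plan is simply to verify directly that $|\phi|^{-1/4}|\Omega|^{1/2}$ is well-defined and nowhere-vanishing on $X^+(\R)$, and then to compute its transformation law under $G(\R)$.

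First, I would observe that on $X^+$ every basic relative invariant $f_i$ is nowhere zero, because the zero locus of $f_i$ is an irreducible component of $\partial X = X\smallsetminus X^+$. By the description of relative invariants recalled in \S\ref{sec:relative-invariants}, $\phi$ is proportional to some product $f_1^{a_1}\cdots f_r^{a_r}$ with $(a_1,\ldots,a_r)\in\Z^r$, hence $\phi$ extends to a nowhere-vanishing rational function on $X^+$, and so does $|\phi|^{-1/4}$. (The existence of such a $\phi$ is guaranteed by the regularity fact $(\det\rho)^2 \in \mathbf{X}^*_\rho(G)$ recalled before Proposition \ref{prop:dual-triplet}.)

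Next, I would compute the two transformation laws separately. On one hand, since $|\Omega|^{1/2}$ is translation-invariant and $g \in \GL(X)$ acts via pull-back by $g^*|\Omega|^{1/2}=|\det g|^{1/2}|\Omega|^{1/2}$, we get
\[ g^*|\Omega|^{1/2} = |\det\rho(g)|^{1/2}\,|\Omega|^{1/2}, \qquad g\in G(\R). \]
On the other hand, the relative invariance $\phi(xg)=(\det\rho(g))^2 \phi(x)$ yields $g^*|\phi|^{-1/4}=|\det\rho(g)|^{-1/2}\,|\phi|^{-1/4}$. Multiplying the two, the factors $|\det\rho(g)|^{\pm 1/2}$ cancel, so $|\phi|^{-1/4}|\Omega|^{1/2}$ is indeed $G(\R)$-invariant.

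Finally, since this section of $\mathcal{L}^{1/2}$ over $X^+(\R)$ is nowhere vanishing, multiplication by it provides an isomorphism $C^\infty(X^+(\R);\CC)\rightiso C^\infty(X^+(\R),\mathcal{L}^{1/2})$, which is $G(\R)$-equivariant by the invariance just established; this is the claimed equivariant trivialization. There is essentially no obstacle here: once one trusts the formalism of \S\ref{sec:densities} and the existence of $\phi$ from regularity, the computation is just a matching of eigencharacters.
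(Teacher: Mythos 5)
Your verification is correct and is precisely the expected argument. The paper itself gives no proof here beyond citing \cite[Lemma~6.6.1]{Li18}, so you have simply supplied the direct computation that lies behind that reference: existence and non-vanishing of $\phi$ on $X^+$ from the structure of $\mathbf{X}^*_\rho(G)$ and the regularity fact $(\det\rho)^2\in\mathbf{X}^*_\rho(G)$, then the cancellation of the $|\det\rho(g)|^{\pm 1/2}$ factors between $g^*|\phi|^{-1/4}$ and $g^*|\Omega|^{1/2}$.
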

\begin{proof}
	This is just a restatement of \cite[Lemma 6.6.1]{Li18}.
\end{proof}

We caution the reader that for homogeneous $G$-spaces in general, the density bundles are not necessarily equivariantly trivializable.

\subsection{Fourier transform on Schwartz spaces}\label{sec:Fourier}
Given $(G, \rho, X)$ be as in Hypothesis \ref{hyp:PVS}, we follow the paradigm of \S\ref{sec:densities} to define the following spaces.
\begin{itemize}
	\item $C^\infty(X^+) := C^\infty \left(X^+, \mathcal{L}^{1/2} \right)$: the space of $C^\infty$ half-densities. It is a Fréchet space with respect to the standard topology as prescribed in \cite[\S 4.1]{LiLNM}; see also \cite[\S 10, Example I]{Tr67} for the scalar-valued case.
	
	The semi-norms in question involve a continuous metric on $\mathcal{L}^{1/2}$, whose choice is immaterial since we consider only its supremum over compact subsets. In our case, one can even trivialize $\mathcal{L}^{1/2}$ to get a more canonical choice.
	\item $L^2(X^+)$: the Hilbert space of $L^2$ half-densities on $X^+(\R)$. It is the same as $L^2(X)$.
	\item $\Schw(X)$: the Fréchet space of Schwartz--Bruhat sections in $L^2(X)$. The relation of $\Schw(X)$ to the usual scalar-valued Schwartz--Bruhat space $\Schw_0(X)$ is straightforward: $\Schw(X) = \Schw_0(X) |\Omega|^{1/2}$ for any $\Omega \in \topwedge \check{X} \smallsetminus \{0\}$. In particular, $\Schw(X)$ is a nuclear Fréchet space.
	
	The topology on $\Schw_0(X)$ is described in \cite[\S 10, Example IV]{Tr67}.
\end{itemize}

The spaces $C^\infty(X^+)$ and $\Schw(X)$ are smooth $G(\R)$-representations and $L^2(X^+)$ is a unitary $G(\R)$-representation.

Define the Fourier transform for half-densities $\mathcal{F}_\psi: \Schw(X) \rightiso \Schw(\check{X})$ as in \cite[\S 6.1]{LiLNM}: it is an isomorphism between Fréchet spaces, and extends to an isomorphism $L^2(X) \rightiso L^2(\check{X})$ satisfying $\|\mathcal{F}\xi\| = \sqrt{A(\psi)} \|\xi\|$ for some constant $A(\psi) > 0$; see the Definition \ref{def:A-psi}. Below is a recap of the formulas.

Let $\lrangle{\cdot, \cdot}: \check{X} \times X \to \R$ be the canonical pairing between $\check{X}, X$, and similarly for $\lrangle{\cdot, \cdot}: \topwedge \check{X} \times \topwedge X \to \R$. Given $\Omega \in \topwedge \check{X} \smallsetminus \{0\}$, we take the $\Psi \in \topwedge X$ with $\lrangle{\Omega, \Psi} = 1$ and define the Fourier transforms
\begin{equation}\label{eqn:Fourier-explicit}
	\begin{tikzcd}[row sep=tiny]
		\mathcal{F}_{\psi, |\Omega|}: \Schw_0(X) \arrow[r] & \Schw_0(\check{X}) \\
		\xi_0 \arrow[mapsto, r] \arrow[phantom, u, "\in" sloped] & { \left[ \check{x} \mapsto \displaystyle\int_{x \in X(\R)} \xi_0(x) \psi(\lrangle{\check{x}, x}) |\Omega|  \right] } \arrow[phantom, u, "\in" sloped] , \\
		\mathcal{F}_\psi: \Schw(X) \arrow[r] & \Schw(\check{X}) \\
		\xi = \xi_0 |\Omega|^{1/2} \arrow[mapsto, r] \arrow[phantom, u, "\in" sloped] & \mathcal{F}_{\psi, |\Omega|}(\xi_0) |\Psi|^{1/2} \arrow[phantom, u, "\in" sloped] .
	\end{tikzcd}
\end{equation}
It is readily seen that $\mathcal{F}_\psi$ is independent of the choice of $\Omega$. By working with half-densities, $\mathcal{F}$ becomes $G(\R)$-equivariant (see \cite[Theorem 6.1.5]{LiLNM}) and we do not have to choose Haar measures.

When there is no confusion about additive characters, we shall write $\mathcal{F}$ instead of $\mathcal{F}_\psi$.

Every $f \in \R[X]$ induces a continuous endomorphism $\xi \mapsto f\xi$ on $\Schw(X)$, namely by pointwise multiplication. On the other hand, every $\check{f} \in \R[\check{X}]$ can be viewed as a differential operator of constant coefficients on $X(\R)$, which can act on $\Schw(X)$ as follows: express $\xi \in \Schw(X)$ as $\xi = \xi_0 |\Omega|^{1/2}$ as before. Hence $\check{f} \xi_0$ make sense and we put
\begin{equation}\label{eqn:derivation-Schwartz}\begin{gathered}
	\check{f} \xi := \left( \check{f} \xi_0 \right) |\Omega|^{1/2} \; \in \Schw(X), \\
	\text{and same for } \xi \in C^\infty(X^+), \text{etc.}
\end{gathered}\end{equation}
This is clearly continuous in $\xi$ and independent of the choice of $\Omega$.

A key observation is that the $G$-action on $\R[\check{X}]$ coincides with the $G$-action on differential operators: indeed, it suffices to compare these actions on $\R[\check{X}]^{\deg = 1}$.

The same constructions also apply to the dual side. In particular, $\R[X]$ acts on $\Schw(\check{X})$ via differential operators of constant coefficients. In order to fix notations, we record the following common sense.

\begin{lemma}\label{prop:F-commute}
	There exists a constant $c(\psi) \in \sqrt{-1} \cdot \R^\times$, depending only on $\psi$, such that for every homogeneous $f \in \R[X]$ and every $\xi \in \Schw(X)$, we have
	\[ \mathcal{F}(f\xi) = c(\psi)^{\deg f} \cdot f \mathcal{F}(\xi). \]
\end{lemma}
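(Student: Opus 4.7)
The identity is the standard Fourier duality between multiplication by polynomial functions and the action of their ``dual'' constant-coefficient differential operators, carried out in the half-density formalism of (\ref{eqn:Fourier-explicit}) and (\ref{eqn:derivation-Schwartz}). I would proceed by first reducing to the case of a linear $f$, and then verifying that case by direct computation.

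First, the reduction. Constant-coefficient differential operators on $\check X(\R)$ commute among themselves, so if the claimed identity is known for $f = f_1$ and $f = f_2$ separately (with the same constant $c(\psi)$), one obtains
\[
 \mathcal{F}(f_1 f_2 \xi) = c(\psi)^{\deg f_1}\, f_1 \mathcal{F}(f_2 \xi) = c(\psi)^{\deg f_1 + \deg f_2}\, f_1 f_2 \mathcal{F}(\xi).
\]
Since both sides of the lemma are $\R$-linear in $f$, and every homogeneous $f \in \R[X]^{\deg = d}$ is an $\R$-linear combination of products of $d$ linear forms, an easy induction reduces everything to the case where $f$ is linear.

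Second, take $f \in \R[X]^{\deg = 1}$ and identify $f$ with the element $v_f \in \check X$ defined by $f(x) = \lrangle{v_f, x}$. Writing $\xi = \xi_0 |\Omega|^{1/2}$ with $\xi_0 \in \Schw_0(X)$, the convention (\ref{eqn:derivation-Schwartz}) gives $f\xi = (f\xi_0)|\Omega|^{1/2}$, while the dual convention makes $f$ act on $\mathcal{F}(\xi) = \mathcal{F}_{\psi,|\Omega|}(\xi_0)|\Psi|^{1/2}$ as the constant vector field on $\check X(\R)$ corresponding to $v_f \in T_0\check X = \check X$; this is the content of the identification $\R[X] = \mathrm{Sym}(\check X)$ implicit in the definition of the action of $\R[X]$ on $\Schw(\check X)$. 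Writing $\psi(t) = e^{iat}$ for the unique $a = a(\psi) \in \R^\times$ and differentiating the defining integral under the integral sign (justified by the Schwartz decay of $\xi_0$) yields
\[
 \bigl(f \cdot \mathcal{F}_{\psi,|\Omega|}(\xi_0)\bigr)(\check x) = ia \int_{X(\R)} \xi_0(x) f(x) \psi(\lrangle{\check x, x}) |\Omega| = ia \cdot \mathcal{F}_{\psi,|\Omega|}(f\xi_0)(\check x).
\]
Multiplying through by $|\Psi|^{1/2}$ gives $f\mathcal{F}(\xi) = ia\cdot\mathcal{F}(f\xi)$, so one can set $c(\psi) := (ia)^{-1}$. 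This constant depends only on $\psi$ (not on $f$, $\xi$, or the auxiliary $\Omega$, since both sides of the target identity are manifestly $\Omega$-independent), and it lies in $\sqrt{-1}\cdot\R^\times$ because $\psi:\R \to \{|z|=1\}$ being a nontrivial continuous homomorphism forces $a \in \R^\times$.

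I do not foresee a genuine obstacle: the whole argument is routine calculus. The only care needed is bookkeeping with conventions, namely, matching the action of $\R[X]$ on $\Schw(\check X)$ via constant vector fields with the derivative of the exponential kernel $\psi(\lrangle{\check x, x})$, and confirming that the same constant $c(\psi)$ produced in the linear case propagates correctly through the multiplicative/inductive step.
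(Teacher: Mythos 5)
Your proof is correct, and it is exactly the detailed version of the paper's one-line argument (``choose volume forms and use \eqref{eqn:Fourier-explicit}, \eqref{eqn:derivation-Schwartz} to reduce to classical Fourier analysis''). The reduction to linear $f$ by multiplicativity and linearity, followed by differentiation of the kernel $\psi(\lrangle{\check{x},x})$, is precisely the classical computation being invoked, and your bookkeeping (including $c(\psi) = (ia)^{-1}$ with $\psi(t)=e^{iat}$) is accurate.
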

\begin{proof}
	Choose volume forms and use \eqref{eqn:Fourier-explicit}, \eqref{eqn:derivation-Schwartz} to reduce to classical Fourier analysis.
\end{proof}

The next issue is the dependence on $\psi$. For $a \in \R^\times$, write $\psi_a(t) = \psi(at)$ and let $\nu_a: \check{X} \to \check{X}$ be the map $y \mapsto ay$. We have $\nu_a^* |\Psi|^{1/2} = |\nu_a^* \Psi|^{1/2} = |a|^{\dim X /2} |\Psi|^{1/2}$ for all $\Psi \in \topwedge X$. One infers from \eqref{eqn:Fourier-explicit} that
\begin{equation}\label{eqn:Fourier-a}\begin{aligned}
	(\mathcal{F}_{\psi_a, |\Omega|} \xi_0)(\check{x}) & = (\mathcal{F}_{\psi, |\Omega|} \xi_0)(a\check{x}), \quad \check{x} \in \check{X}, \\
	\mathcal{F}_{\psi_a} \xi & = |a|^{- \dim X /2} \nu_a^* \left( \mathcal{F}_\psi \xi \right).
\end{aligned}\end{equation}

To $\psi$ and $|\Omega|$ is associated the \emph{dual Haar measure} $|\Psi|'$ on $\check{X}$ characterized by
\[ \int_{\check{X}} \left| \mathcal{F}_{\psi, |\Omega|} \xi_0 \right|^2 |\Psi|' = \int_X \left| \xi_0 \right|^2 |\Omega|. \]
\begin{compactitem}
	\item If $|\Omega|$ is replaced by $t|\Omega|$ where $t \in \R_{> 0}$, then $|\Psi|'$ gets multiplied by $t^{-1}$;
	\item If $\psi$ is replaced by $\psi_a$, then $|\Psi|'$ gets multiplied by $|a|^{\dim X}$.
\end{compactitem}

\begin{definition}\label{def:A-psi}
	Let $\Omega \in \topwedge \check{X} \smallsetminus \{0\}$; take the $\Psi \in \topwedge X$ such that $\lrangle{\Psi, \Omega} = 1$. Define the $|\Psi|'$ as before, with respect to $|\Omega|$ and $\psi$. Set
	\[ A(\psi) := \frac{|\Psi|}{|\Psi|'} \; \in \R_{> 0}. \]
\end{definition}

By the foregoing discussions, $A(\psi)$ depends only on $\psi$ and $X$. Also,
\[ A(\psi_a) = |a|^{-\dim X} A(\psi), \quad a \in \R^\times . \]

\begin{example}\label{eg:psi-standard}
	The classical Plancherel's identity says that $\psi(t) = e^{2\pi i t}$ satisfies $A(\psi) = 1$.
\end{example}

Using the dual Haar measures $|\Omega|$ and $|\Psi|'$, the \emph{Fourier inversion formula} reads
\begin{equation*}
	\mathcal{F}_{-\psi, |\Psi|'} \mathcal{F}_{\psi, |\Omega|} = \identity_{\Schw_0(X)} .
\end{equation*}

\begin{proposition}\label{prop:F-inversion}
	For every choice of $\psi$, we have $\mathcal{F}_{-\psi} \mathcal{F}_\psi = A(\psi) \cdot \identity_{\Schw(X)}$.
\end{proposition}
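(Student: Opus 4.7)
The plan is to unwind the definitions and reduce everything to the scalar-valued Fourier inversion that was recalled just above the statement. Fix $\Omega \in \topwedge \check{X} \smallsetminus \{0\}$ and let $\Psi \in \topwedge X$ be dual to it, i.e.\ $\lrangle{\Psi, \Omega} = 1$. Write an arbitrary $\xi \in \Schw(X)$ as $\xi = \xi_0 |\Omega|^{1/2}$ with $\xi_0 \in \Schw_0(X)$; then the half-density Fourier transform \eqref{eqn:Fourier-explicit} reads
\[ \mathcal{F}_\psi \xi = \mathcal{F}_{\psi, |\Omega|}(\xi_0)\, |\Psi|^{1/2} \; \in \Schw(\check{X}). \]

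Next I would apply the same recipe in the other direction. The pair $(\Psi, \Omega)$ is also dual in the sense used on $\check{X}$, so decomposing $\mathcal{F}_\psi \xi = \eta_0 |\Psi|^{1/2}$ with $\eta_0 := \mathcal{F}_{\psi, |\Omega|}(\xi_0)$ gives
\[ \mathcal{F}_{-\psi}\mathcal{F}_\psi \xi = \mathcal{F}_{-\psi, |\Psi|}(\eta_0)\, |\Omega|^{1/2} = \bigl( \mathcal{F}_{-\psi, |\Psi|} \mathcal{F}_{\psi, |\Omega|}\bigr)(\xi_0) \, |\Omega|^{1/2}. \]
All of the remaining work happens at the scalar level.

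The scalar Fourier inversion recalled just before the statement says $\mathcal{F}_{-\psi, |\Psi|'} \mathcal{F}_{\psi, |\Omega|} = \identity_{\Schw_0(X)}$, where $|\Psi|'$ is the Haar measure on $\check{X}$ dual to $(\psi, |\Omega|)$. Because the scalar Fourier transform $\mathcal{F}_{-\psi, (\cdot)}$ is linear in the underlying measure and $|\Psi| = A(\psi) |\Psi|'$ by Definition \ref{def:A-psi}, one has
\[ \mathcal{F}_{-\psi, |\Psi|} = A(\psi) \, \mathcal{F}_{-\psi, |\Psi|'}, \]
so that $\mathcal{F}_{-\psi, |\Psi|} \mathcal{F}_{\psi, |\Omega|} = A(\psi) \cdot \identity_{\Schw_0(X)}$. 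Substituting back yields $\mathcal{F}_{-\psi}\mathcal{F}_\psi \xi = A(\psi) \xi_0 |\Omega|^{1/2} = A(\psi) \xi$, as required.

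There is essentially no obstacle: the only point to verify carefully is the scaling rule $\mathcal{F}_{-\psi, t|\Psi|'} = t \mathcal{F}_{-\psi, |\Psi|'}$ and the compatibility of the decomposition $\mathcal{F}_\psi \xi = \eta_0 |\Psi|^{1/2}$ with the reverse formula in \eqref{eqn:Fourier-explicit}; both are immediate from the explicit formula. Independence from the choice of $\Omega$ follows because $(\Omega, \Psi)$ and $(t\Omega, t^{-1}\Psi)$ give the same $\xi$ and $\mathcal{F}_\psi \xi$, and $A(\psi)$ is itself intrinsic by the discussion preceding Definition \ref{def:A-psi}.
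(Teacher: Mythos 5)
Your proof is correct and follows essentially the same route as the paper: write $\xi = \xi_0|\Omega|^{1/2}$, apply the explicit formula \eqref{eqn:Fourier-explicit} twice, use linearity of the scalar Fourier transform in the underlying Haar measure to replace $|\Psi|$ by $A(\psi)|\Psi|'$, and conclude by the scalar Fourier inversion $\mathcal{F}_{-\psi, |\Psi|'}\mathcal{F}_{\psi,|\Omega|} = \identity$. The only cosmetic difference is that you spell out the intermediate decomposition $\mathcal{F}_\psi \xi = \eta_0|\Psi|^{1/2}$ and comment on independence from $\Omega$, which the paper leaves implicit.
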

\begin{proof}
	Take $\Omega$, $\Psi$, $|\Psi|'$ as above. Let $\xi = \xi_0 |\Omega|^{1/2} \in \Schw(X)$. Apply \eqref{eqn:Fourier-explicit} twice to see
	\begin{multline*}
		\mathcal{F}_{-\psi} \mathcal{F}_\psi \xi = \mathcal{F}_{-\psi} \left( \mathcal{F}_{\psi, |\Omega|} \xi_0 \cdot |\Psi|^{1/2} \right) = \left( \mathcal{F}_{-\psi, |\Psi|} \mathcal{F}_{\psi, |\Omega|} \xi_0 \right) \cdot |\Omega|^{1/2} \\
		=  \frac{|\Psi|}{|\Psi|'} \cdot \left( \mathcal{F}_{-\psi, |\Psi|'} \mathcal{F}_{\psi, |\Omega|} \xi_0 \right) \cdot |\Omega|^{1/2} = A(\psi) \xi ,
	\end{multline*}
	as asserted.
\end{proof}

\begin{remark}\label{rem:selfdual-F}
	These properties motivate us to define the self-dual version of $\mathcal{F}_\psi$, namely
	\[ \mathcal{F}^{\mathrm{sd}}_\psi := A(\psi)^{-1/2} \mathcal{F}_\psi . \]
	It satisfies $\mathcal{F}^{\mathrm{sd}}_{-\psi} \mathcal{F}^{\mathrm{sd}}_\psi = \identity_{\Schw(X)}$ and extends to a $G(\R)$-equivariant isometry $L^2(X) \rightiso L^2(\check{X})$.
\end{remark}

\section{Desiderata}\label{sec:desiderata}
Throughout this section, $(G, \rho, X)$ will be as in Hypothesis \ref{hyp:PVS}.

\subsection{Coefficients of representations}\label{sec:coeff-rep}
Let $\pi$ be an SAF representation in the sense of \cite{BK14}, also known as Casselman--Wallach representation; note that $V_\pi$ is nuclear. Following \cite[\S 4.1]{LiLNM}, we set
\[ \mathcal{N}_\pi(X^+) := \Hom_{G(\R)}(\pi, C^\infty(X^+)) \]
where the $\Hom_{G(\R)}$ is the continuous and $G(\R)$-equivariant $\Hom$-space between continuous representations.

For $\eta \in \mathcal{N}_\pi(X^+)$ and $v \in V_\pi$, we call $\eta(v) \in C^\infty(X^+)$ a \emph{generalize matrix coefficient} of $\pi$ on $X^+(\R)$, with values in half-densities.

\begin{remark}\label{rem:eta-0}
	Let $C^\infty(X^+; \CC)$ denote the usual topological vector space of $C^\infty$-functions on $X^+(\R)$. It is more common to consider scalar-valued generalized matrix coefficients arising from $\Hom_{G(\R)}(\pi, C^\infty(X^+; \CC))$, yet there is little difference: Lemma \ref{prop:avoid-density} furnishes the isomorphism
	\begin{equation*}\begin{aligned}
			\Hom_{G(\R)}(\pi, C^\infty(X^+)) & \rightiso  \Hom_{G(\R)}(\pi, C^\infty(X^+; \CC)) \\
			\eta & \mapsto \eta_0 := \eta \cdot |\phi|^{1/4} |\Omega|^{-1/2}
	\end{aligned}\end{equation*}
	with $\phi$, $\Omega$ as in Lemma \ref{prop:avoid-density}.
\end{remark}

Recall the following
\begin{theorem}
	The $\CC$-vector space $\mathcal{N}_\pi(X^+)$ is finite-dimensional.
\end{theorem}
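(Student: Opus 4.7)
The plan is to invoke the general finite multiplicity theorem for real spherical spaces, due to Kobayashi--Oshima. Hypothesis~\ref{hyp:PVS} requires $X^+$ to be absolutely spherical, hence in particular real spherical: there exists an open $P_0(\R)$-orbit in $X^+(\R)$ for any minimal parabolic $P_0 \subset G$. Their theorem asserts that for every Casselman--Wallach (equivalently, SAF) representation $\pi$ of $G(\R)$, the space $\Hom_{G(\R)}(\pi, C^\infty(X^+;\CC))$ is finite-dimensional.

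To transfer this to the half-density setting, one uses Lemma~\ref{prop:avoid-density} and the construction in Remark~\ref{rem:eta-0}: the choice of a non-degenerate relative invariant $\phi$ of eigencharacter $(\det\rho)^2$ together with $\Omega \in \topwedge \check{X}\smallsetminus\{0\}$ yields a $G(\R)$-equivariant trivialization of $\mathcal{L}^{1/2}$ over $X^+(\R)$, whence a $G(\R)$-equivariant topological isomorphism $C^\infty(X^+) \simeq C^\infty(X^+;\CC)$. Composing with $\eta \in \mathcal{N}_\pi(X^+)$ gives the isomorphism $\mathcal{N}_\pi(X^+) \rightiso \Hom_{G(\R)}(\pi, C^\infty(X^+;\CC))$, reducing everything to the scalar-valued statement.

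The substantive obstacle is of course the Kobayashi--Oshima theorem itself, whose proof combines the structure theory of real spherical spaces with asymptotic analysis of generalized matrix coefficients and boundary value maps. If one prefers an argument tailored to the absolutely spherical case, a viable route is: reduce to irreducible $\pi$ using the finite length of SAF representations; embed $\pi$ into a minimal principal series $\Ind_{P_0(\R)}^{G(\R)}\sigma$ via Casselman's subrepresentation theorem; then bound the dimension of continuous $H$-invariant functionals on this principal series, where $H$ is the stabilizer in $G$ of a base point of $X^+$. By Bruhat's theory of semi-invariant distributions one filters the dual of the principal series by the $(P_0(\R), H(\R))$-double cosets in $G(\R)$, and the number of such cosets is finite precisely by real sphericity; this yields a crude bound depending only on $\sigma|_{M \cap H}$ and on the set of double cosets.
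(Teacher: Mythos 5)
Your argument matches the paper's proof in all essentials: reduce to scalar-valued functions via the equivariant trivialization of $\mathcal{L}^{1/2}$ over $X^+(\R)$ (Lemma~\ref{prop:avoid-density}, Remark~\ref{rem:eta-0}), then invoke the Kobayashi--Oshima finite multiplicity theorem for real spherical spaces. One point to tighten: \cite[Theorem A]{KO13} is stated for \emph{irreducible} $\pi$ and for a single homogeneous space, so one should first decompose $X^+(\R)$ into its finitely many open-and-closed $G(\R)$-orbits and then handle general SAF $\pi$ --- either by the finite-length dévissage you sketch (using left-exactness of $\Hom_{G(\R)}(-, C^\infty)$) or, as the paper does, by passing to the Harish-Chandra module $V_\pi^{K\text{-fini}}$ and citing \cite{KS16}.
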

\begin{proof}
	It suffices to show that $\Hom_{G(\R)}(\pi, C^\infty(O))$ is finite-dimensional for each $G(\R)$-orbit $O$, which is closed and open in $X^+(\R)$. This property is covered by \cite[Theorem A]{KO13} if $\pi$ is irreducible. For the general case, one can fix a maximal compact subgroup $K$, replace $C^\infty(O)$ by its scalar version $C^\infty(O; \CC)$ (see above) and pass to the Harish-Chandra module $V_\pi^{K\text{-fini}}$; the main result of \cite{KS16} then implies finiteness.
\end{proof}

Let $C^\infty_c(X^+) := C^\infty_c(X^+, \mathcal{L}^{1/2})$. As explained in \cite[\S 4.1]{LiLNM} or \cite[\S 13]{Tr67}, $C^\infty_c(X^+)$ carries a natural topology through
\[ C^\infty_c(X^+) = \varinjlim_{\substack{\Omega \subset X^+(\R) \\ \text{compact}}} C^\infty_\Omega(X^+) \]
where $C^\infty_\Omega(X^+) := \left\{ u \in C^\infty_c(X^+): \Supp(u) \subset \Omega \right\}$ carries the semi-norms given by suprema of derivatives, using any continuous metric on $\mathcal{L}^{1/2}$. It then becomes a smooth $G(\R)$-representation on an LF-space (= strict inductive limit of Fréchet spaces). The inclusion $C^\infty_c(X^+) \hookrightarrow \Schw(X)$ is equivariant and continuous. Upon choosing a volume form, these facts reduce to the well-known setting of scalar-valued functions. Elements of $C_c^\infty(X^+)^\vee$ are nothing but \emph{distributions} on the open subset $X^+(\R)$ of $X(\R)$, which fits into the classical picture if we choose volume forms.

Notice that $G(\R)$ acts linearly on $C_c^\infty(X^+)^\vee$, and the inclusion map $C^\infty(X^+) \hookrightarrow C^\infty_c(X^+)^\vee$ is $G(\R)$-equivariant.

Since $\dim_{\CC} \mathcal{N}_\pi(X^+)$ is finite, one can talk about holomorphic or meromorphic families inside $\mathcal{N}_\pi(X^+)$ unambiguously. Fix a maximal compact subgroup $K \subset G(\R)$.

\begin{definition}
	For any commutative ring $A$, set $\Lambda_A := \mathbf{X}^*_\rho(G) \otimes_{\Z} A$.
\end{definition}

\begin{lemma}\label{prop:holomorphy-test}
	Let $\{\eta_\omega\}_{\omega \in \Omega}$ be a family of elements in $\mathcal{N}_\pi(X^+)$, where $\Omega$ is a connected complex manifold. Let $\lambda \in \Lambda_{\CC}$. The following are equivalent:
	\begin{enumerate}[(i)]
		\item $\{\eta_\omega\}_{\omega \in \Omega}$ is a holomorphic family in $\mathcal{N}_\pi(X^+)$;
		\item $\int_{X^+(\R)} \eta_\omega(v)|f|^\lambda \xi$ is holomorphic in $\omega$ for all $v \in V_\pi^{K\text{-fini}}$ and $\xi \in C^\infty_c(X^+)$.
	\end{enumerate}
\end{lemma}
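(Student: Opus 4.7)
The plan is to prove (i) $\Rightarrow$ (ii) by composition with a continuous linear functional, and to prove (ii) $\Rightarrow$ (i) by exploiting the finite-dimensionality of $\mathcal{N}_\pi(X^+)$ together with a separation argument. For (i) $\Rightarrow$ (ii): on $X^+(\R)$ every $f_i$ is nowhere zero, hence $|f|^\lambda$ is $C^\infty$ there; if $\xi \in C_c^\infty(X^+)$, then $|f|^\lambda \xi$ is a compactly supported $C^\infty$-density on $X^+(\R)$, and integration against it defines a continuous $\CC$-linear functional on the Fréchet space $C^\infty(X^+)$. Composing $\omega \mapsto \eta_\omega$, evaluation at $v$, and this functional gives holomorphy in $\omega$.

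For the converse, the key observation is that $|f|^{-\lambda}$ is likewise $C^\infty$ on $X^+(\R)$, so multiplication by $|f|^\lambda$ is a topological automorphism of $C_c^\infty(X^+)$. Consequently (ii) is equivalent to saying that $\omega \mapsto \int_{X^+(\R)} \eta_\omega(v)\, \xi$ is holomorphic for every $v \in V_\pi^{K\text{-fini}}$ and every $\xi \in C_c^\infty(X^+)$. Since $\mathcal{N}_\pi(X^+)$ is finite-dimensional, a map $\omega \mapsto \eta_\omega$ is holomorphic if and only if $\omega \mapsto \ell(\eta_\omega)$ is holomorphic for every $\ell$ in a family of linear functionals that spans $\mathcal{N}_\pi(X^+)^\vee$, equivalently, that separates points. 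It therefore suffices to show that the functionals
\[ \ell_{v, \xi}: \eta \mapsto \int_{X^+(\R)} \eta(v)\, \xi, \qquad v \in V_\pi^{K\text{-fini}}, \; \xi \in C_c^\infty(X^+), \]
separate $\mathcal{N}_\pi(X^+)$.

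To verify separation, suppose $\eta \in \mathcal{N}_\pi(X^+)$ satisfies $\ell_{v, \xi}(\eta) = 0$ for all admissible $v, \xi$. Fixing $v$, the $C^\infty$ half-density $\eta(v)$ pairs trivially with every $\xi \in C_c^\infty(X^+)$; non-degeneracy of the density/half-density pairing then forces $\eta(v) = 0$ in $C^\infty(X^+)$. This holds for every $K$-finite $v$, and since $V_\pi^{K\text{-fini}}$ is dense in $V_\pi$ by the Casselman--Wallach theory of SAF representations while $\eta$ is continuous, we conclude $\eta = 0$. The argument presents no real obstacle; the substantive inputs are the density of $K$-finite vectors in an SAF representation and the finite-dimensionality of $\mathcal{N}_\pi(X^+)$, both already in force before the lemma.
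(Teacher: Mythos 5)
Your proof is correct and follows essentially the same strategy as the paper: show that the functionals $\eta \mapsto \int_{X^+(\R)} \eta(v)|f|^\lambda \xi$ separate points of the finite-dimensional space $\mathcal{N}_\pi(X^+)$ (hence span its dual), and conclude holomorphy componentwise. The only cosmetic difference is that you first absorb the nowhere-vanishing factor $|f|^\lambda$ by observing that multiplication by it is an automorphism of $C_c^\infty(X^+)$, whereas the paper keeps the factor and works directly with $Z_\lambda$; the separation argument (via $\eta(v)=0$ for $K$-finite $v$ and continuity) is identical.
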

\begin{proof}
	Clearly, (i) implies (ii). Now assume (ii) and write $Z_\lambda(\eta, v, \xi) := \int_{X^+(\R)} \eta(v)|f|^\lambda \xi$ (temporarily, but see Proposition \ref{prop:zeta-restricted}). Observe that if $\eta \in \mathcal{N}_\pi(X^+)$ satisfies $Z_\lambda(\eta, v, \xi) = 0$ for all $(v, \xi) \in V_\pi^{K\text{-fini}} \times C^\infty_c(X^+)$, then $\eta(v) = 0$ for all $v \in V_\pi^{K\text{-fini}}$, hence $\eta = 0$ by its continuity.
	
	Since $\dim_{\CC} \mathcal{N}_\pi(X^+)$ is finite, these linear functionals generate $\mathcal{N}_\pi(X^+)^\vee$ and there is a finite subset $F \subset V_\pi^{K\text{-fini}} \times C^\infty_c(X^+)$ such that
	\begin{align*}
		\mathcal{N}_\pi(X^+) & \hookrightarrow \CC^F \\
		\eta & \mapsto \left( Z_\lambda(\eta, v, \xi) \right)_{(v, \xi) \in F}.
	\end{align*}
	As $\omega \mapsto Z_\lambda(\eta_\omega, v, \xi)$ is holomorphic for each $(v, \xi)$, the property (i) follows at once.
\end{proof}

\begin{corollary}\label{prop:holomorphy-twist}
	Let $\{\eta_\omega\}_{\omega \in \Omega}$ be a holomorphic family of elements inside $\mathcal{N}_\pi(X^+)$, where $\Omega$ is a connected complex manifold. For every $\lambda \in \Lambda_{\CC}$, the family $\{ \eta_\omega |f|^\lambda \}_{\omega \in \Omega}$ inside $\mathcal{N}_{\pi \otimes |\omega|^\lambda}(X^+)$ is also holomorphic.
\end{corollary}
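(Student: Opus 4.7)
The plan is to reduce the claim directly to the testing criterion provided by Lemma \ref{prop:holomorphy-test}, applied to both $\{\eta_\omega\}$ and to the twisted family. The key is that multiplication by $|f|^\lambda$ is continuous on $C^\infty(X^+)$, so if one agrees on a preliminary bookkeeping of the twist, the statement is essentially an identity of integrals.

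First I would verify that $\eta_\omega|f|^\lambda$ actually lies in $\mathcal{N}_{\pi\otimes|\omega|^\lambda}(X^+)$. Continuity of $V_\pi \to C^\infty(X^+)$, $v \mapsto \eta_\omega(v)|f|^\lambda$, is immediate from the continuity of multiplication by the smooth function $|f|^\lambda$ on $X^+(\R)$. For the equivariance, the relation $f_i(xg) = \omega_i(g)f_i(x)$ gives $|f|^\lambda(xg) = |\omega|^\lambda(g)|f|^\lambda(x)$, where $|\omega|^\lambda := \prod_i |\omega_i|^{\lambda_i}$, and a direct one-line computation then shows that $\eta_\omega|f|^\lambda$ intertwines $\pi\otimes|\omega|^\lambda$ with the natural $G(\R)$-action on $C^\infty(X^+)$. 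The twisted representation $\pi\otimes|\omega|^\lambda$ is again SAF (a continuous character twist preserves smoothness, admissibility, moderate growth and the Fréchet structure). Moreover $|\omega|^\lambda$, being a continuous character $G(\R) \to \R_{>0}^{\,}$ extended by $\exp(\lambda\cdot\log|\omega|)$, is trivial on the compact group $K$, so
\[
V_{\pi\otimes|\omega|^\lambda}^{K\text{-fini}} = V_\pi^{K\text{-fini}}.
\]

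Now I would apply the direction (i)$\Rightarrow$(ii) of Lemma \ref{prop:holomorphy-test} to the hypothesis $\{\eta_\omega\}$ with the test parameter $\lambda$: for every $v \in V_\pi^{K\text{-fini}}$ and $\xi \in C^\infty_c(X^+)$, the integral
\[
\int_{X^+(\R)} \eta_\omega(v)\,|f|^\lambda\,\xi
\]
is holomorphic in $\omega \in \Omega$. But this integral is equal to $\int_{X^+(\R)} (\eta_\omega|f|^\lambda)(v)\,\xi$, which is precisely the condition of Lemma \ref{prop:holomorphy-test}(ii) for the family $\{\eta_\omega|f|^\lambda\}_{\omega\in\Omega}$ inside $\mathcal{N}_{\pi\otimes|\omega|^\lambda}(X^+)$, with test parameter $0 \in \Lambda_{\CC}$. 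The direction (ii)$\Rightarrow$(i) of the Lemma then yields the desired holomorphy.

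There is no serious obstacle here: the only thing that might look subtle is making sure the twist is absorbed on the correct side and that the $K$-finite vectors, the test integral, and the testing parameter all match up after the twist. Once the bookkeeping is in place, the result is an immediate consequence of the lemma; no estimates or new analysis are needed.
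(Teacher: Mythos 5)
Your proposal is correct and is essentially the paper's own argument, just with the bookkeeping (membership of $\eta_\omega|f|^\lambda$ in $\mathcal{N}_{\pi\otimes|\omega|^\lambda}(X^+)$, the triviality of $|\omega|^\lambda$ on $K$, and the matching of test parameters $\lambda$ versus $0$) spelled out explicitly. The paper's one-line proof --- ``Apply the characterization (ii) in Lemma \ref{prop:holomorphy-test}'' --- encapsulates exactly this reduction.
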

\begin{proof}
	Apply the characterization (ii) in Lemma \ref{prop:holomorphy-test}.
\end{proof}

Using the embedding $X^+ \hookrightarrow X$, we let $\mathcal{D}(X^+)$ act on the left of $C^\infty(X^+)$ as follows.

\begin{definition}
	Let $u = u_0 |\Omega|^{1/2} \in C^\infty(X^+)$ where $u_0 \in C^\infty(X^+; \CC)$ and $\Omega \in \topwedge \check{X} \smallsetminus \{0\}$. For $D \in \mathcal{D}(X^+)$, set
	\[ Du := (Du_0) \cdot |\Omega|^{1/2}. \]
	This makes $C^\infty(X^+)$ into a left $\mathcal{D}(X^+)$-module, independently of the choice of $\Omega$. The recipe is compatible with \eqref{eqn:derivation-Schwartz}.
\end{definition}

Write $D \mapsto {}^g D = gDg^{-1}$ for the left action of $g \in G$ on differential operators, and similarly for functions, volume forms, etc. It is routine to see that ${}^g u = {}^g u_0 \cdot |{}^g \Omega|^{1/2}$ and
\begin{align*}
	{}^g (Du) & = {}^g (Du_0) \cdot |{}^g \Omega|^{1/2} = ({}^g D) ({}^g u_0) \cdot |{}^g \Omega|^{1/2} \\
	& = ({}^g D) ({}^g u).
\end{align*}
The case of real-analytic differential operators is completely analogous.

\begin{definition}\label{def:D-action-N}
	Make $\mathcal{N}_\pi(X^+)$ into a left $\mathcal{D}(X^+)^G$-module by setting $D\eta$ to be $v \mapsto D(\eta(v))$, for all $\eta \in \mathcal{N}_\pi(X^+)$ and $D \in \mathcal{D}(X^+)^G$. More generally, $\mathcal{N}_\pi(X^+)$ is a left module under the ring of $G(\R)$-invariant real-analytic differential operators on $X^+(\R)$.
\end{definition}

All the foregoing constructions apply to the dual triplet $(G, \check{\rho}, \check{X})$ as well. By Proposition \ref{prop:dual-triplet} and the canonicity of density bundles, for any given $\pi$ we have can take a non-degenerate relative invariant $f$ to obtain an isomorphism
\[\begin{tikzcd}[row sep=tiny]
	\mathcal{N}_\pi(X^+) \arrow[r, "\sim"] & \mathcal{N}_\pi(\check{X}^+) \\
	\eta \arrow[mapsto, r] & (f^{-1} \dd f)_* \circ \eta
\end{tikzcd}\]
where $(f^{-1} \dd f)_*: C^\infty(X^+) \rightiso C^\infty(\check{X}^+)$ is the transport of structure applied to half-densities.

\subsection{Statement of the main theorems}\label{sec:main-thm}
The following constructions and statements are extracted from \cite{LiLNM, Li18}.

\begin{definition}
	Choose basic relative invariants $f_1, \ldots, f_r \in \R[X]$ as in \S\ref{sec:relative-invariants}, with eigencharacters $\omega_1, \ldots, \omega_r$. For every $\lambda = \sum_{i=1}^r \omega_i \otimes \lambda_i \in \Lambda_{\CC}$, we write
	\[ |f|^\lambda := \prod_{i=1}^r |f_i|^{\lambda_i}, \quad |\omega|^\lambda := \prod_{i=1}^r |\omega_i|^{\lambda}, \]
	so that $|f|^\lambda: X(\R) \to \R_{\geq 0}$ has $G(\R)$-eigencharacter $|\omega|^\lambda$.
\end{definition}

For $\lambda = \sum_{i=1}^r \omega_i \otimes \lambda_i \in \Lambda_{\CC}$ and $\kappa = \sum_{i=1}^r \omega_i \otimes \kappa_i \in \Lambda_{\R}$, the notation $\Re(\lambda) \relgeq{X} \kappa$ signifies that $\Re(\lambda_i) \geq \kappa_i$ for all $i$; the notation $\Re(\lambda) \relgg{X} 0$ signifies that $\Re(\lambda_i) \gg 0$ for all $i$.

\begin{definition}[Generalized zeta integral]\label{def:zeta}
	Let $\pi$ be an SAF representation of $G(\R)$. For all $\eta \in \mathcal{N}_\pi(X^+)$, $v \in V_\pi$, $\xi \in \Schw(X)$ and $\lambda \in \Lambda_{\CC}$ with $\Re(\lambda) \relgg{X} 0$ (see the discussion below), set
	\[ Z_\lambda(\eta, v, \xi) := \int_{X^+(\R)} \eta(v) |f|^\lambda \xi. \]
	The integrand is a density on $X^+(\R)$, hence the integral makes sense. If we write $\xi = \xi_0 |\Omega|^{1/2}$ and $\eta(v) = \eta_0(v) |\phi|^{-1/4} |\Omega|^{1/2}$ (as in Remark \ref{rem:eta-0}), arrange that $|\phi|^{1/4} = |f|^{\lambda_0}$ for some $\lambda_0 \in \frac{1}{4} \Lambda_{\Z}$, and consider the invariant measure $\dd \mu := |\phi|^{-1/2} |\Omega|^{1/2}$ on $X^+(\R)$, then
	\begin{align*}
		Z_\lambda(\eta, v, \xi) & = \int_{X(\R)} \eta_0(v) |f|^{\lambda - \lambda_0} \xi_0 |\Omega| \\
		& = \int_{X^+(\R)} \eta_0(v) |f|^{\lambda + \lambda_0} \xi_0 \dd \mu.
	\end{align*}
\end{definition}

We will also view $Z_\lambda(\eta, \cdot, \cdot)$ as a family of bilinear forms in $(v, \xi)$. Implicit in the definition above is the convergence of $Z_\lambda(\eta, v, \xi)$ for $\Re(\lambda) \relgg{X} 0$. This is made precise in the following main result.

\begin{theorem}\label{prop:convergence}
	There is a constant $\kappa = \kappa(\pi)\in \Lambda_{\R}$, depending only on $\pi$ and $(G, \rho, X)$, such that the integral in Definition \ref{def:zeta} converges whenever $\Re(\lambda) \relgeq{X} \kappa$, for all $v, \xi$ and $\eta \in \mathcal{N}_\pi(X^+)$. Inside this range of convergence,
	\begin{enumerate}[(i)]
		\item $Z_\lambda(\eta, v, \xi)$ is jointly continuous in $(v, \xi)$;
		\item $\lambda \mapsto Z_\lambda(\eta, v, \xi)$ is holomorphic, when viewed as a function valued in $\mathrm{Bil}(V_\pi, \Schw(X)) \simeq \left(V_\pi \hat{\otimes} \Schw(X) \right)^\vee$;
		\item $Z_\lambda(\eta, v, \xi)$ is bounded in vertical strips as a function in $\lambda$ for any pair $(v, \xi)$.
	\end{enumerate}
	Here $\mathrm{Bil}(V_\pi, \Schw(X))$ stands for the space of jointly continuous bilinear forms, $\hat{\otimes}$ stands for the completed tensor product for nuclear spaces, and $(\cdots)^\vee$ stands for the continuous dual.
\end{theorem}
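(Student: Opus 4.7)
By Lemma \ref{prop:avoid-density}, $\mathcal{L}^{1/2}$ is trivialized over $X^+(\R)$ by the $G(\R)$-invariant half-density $|\phi|^{-1/4}|\Omega|^{1/2}$. Through the isomorphism of Remark \ref{rem:eta-0} I replace $\eta$ by a scalar-valued $\eta_0 \in \Hom_{G(\R)}(\pi, C^\infty(X^+; \CC))$ and write $\xi = \xi_0|\Omega|^{1/2}$. Then, up to a shift of $\lambda$ by some $\lambda_0 \in \tfrac14 \Lambda_{\Z}$ absorbing the power of $\phi$,
\[ Z_\lambda(\eta, v, \xi) = \int_{X^+(\R)} \eta_0(v)(x)\, |f(x)|^{\lambda+\lambda_0}\, \xi_0(x)\, \dd\mu(x), \]
where $\dd\mu$ is a $G(\R)$-invariant measure on $X^+(\R)$. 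The entire theorem reduces to producing a pointwise bound for the integrand that is uniform in $(v,\xi)$ and locally uniform in $\lambda$.

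\textbf{Uniform moderate growth of generalized matrix coefficients.} The central ingredient is the following estimate: there exist a continuous semi-norm $p$ on $V_\pi$, an integer $N \geq 0$, and $\mu = \mu(\pi) \in \Lambda_{\R}$, such that uniformly as $\eta$ ranges over a fixed basis of $\mathcal{N}_\pi(X^+)$,
\[ |\eta_0(v)(x)| \leq p(v)\, (1+\|x\|)^N\, |f(x)|^{-\mu}, \qquad v \in V_\pi,\ x \in X^+(\R), \]
where $\|\cdot\|$ denotes any fixed norm on $X(\R)$. This simultaneously quantifies the at-most-polynomial growth at infinity in $X(\R)$ and the at-most-polynomial blow-up along the hypersurface $\partial X$; it is the ``soft bound'' alluded to as Proposition \ref{prop:soft-bound}. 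Uniform bounds of this shape for generalized matrix coefficients on (real) spherical spaces are furnished by \cite{KKS17} or \cite{Li19}, and the automatic continuity available for SAF representations upgrades the estimate on $V_\pi^{K\text{-fini}}$ to the asserted continuous semi-norm on all of $V_\pi$.

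\textbf{Deduction of (i)--(iii).} For each $M \in \Z_{\geq 0}$ the Schwartz decay of $\xi_0$ yields a continuous semi-norm $q_M$ on $\Schw(X)$ with $|\xi_0(x)| \leq q_M(\xi)\,(1+\|x\|)^{-M}$. Choosing $M \gg N$ kills the growth at infinity, and choosing $\kappa \in \Lambda_{\R}$ componentwise large enough that $|f|^{\Re(\lambda)+\lambda_0-\mu}$ is integrable against $\dd\mu$ in a neighborhood of $\partial X$ for all $\Re(\lambda) \relgeq{X} \kappa$ controls the behavior along the boundary. Combining these with the moderate growth bound gives
\[ |Z_\lambda(\eta, v, \xi)| \leq C(\Re(\lambda))\, p(v)\, q_M(\xi), \]
which proves absolute convergence and joint continuity in $(v,\xi)$, settling (i). Since the dominating bound depends only on $\Re(\lambda)$, it is uniform on vertical strips, giving (iii). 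Holomorphy in $\lambda$ as a map into $\mathrm{Bil}(V_\pi, \Schw(X))$ follows from Morera's theorem combined with dominated convergence, as the integrand is entire in $\lambda$ and the dominant is locally uniform on compact subsets of $\{\Re(\lambda) \relgeq{X} \kappa\}$.

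\textbf{Main obstacle.} The only substantive step is the uniform moderate growth estimate of step 2. The improvement over \cite[\S 6.6]{Li18}, where a tailored coset decomposition was needed for essentially symmetric spaces, is that for our general Hypothesis \ref{hyp:PVS} such ad hoc geometry can be bypassed by importing the general spherical-variety bounds of \cite{KKS17, Li19}; once these are in hand, everything else is elementary integral bookkeeping.
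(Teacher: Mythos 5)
Your proposal follows the same route as the paper: reduce to scalar coefficients via Lemma \ref{prop:avoid-density}, establish a uniform moderate-growth bound on $\eta_0(v)$ controlling both the growth at infinity and the blow-up along $\partial X$, and then combine with Schwartz decay of $\xi_0$ to read off convergence, joint continuity, vertical-strip boundedness and (via Morera plus dominated convergence) holomorphy. The one place where you compress the argument is in asserting that Proposition \ref{prop:soft-bound} together with \cite{KKS17, Li19} directly yields the factored shape $|\eta_0(v)(x)| \leq p(v)(1+\|x\|)^N |f(x)|^{-\mu}$: the paper's Proposition \ref{prop:soft-bound} only delivers a Nash-function majorant $p$ on $X^+(\R)$ (obtained from a subanalytic weight function via a compactification and Łojasiewicz's inequality), and the separation into polynomial growth on $X(\R)$ times $|f|^{-\mu}$ requires a further semialgebraic comparison, namely \cite[Lemma 6.6.5]{Li18} ($|f|^\mu p \leq p_1$ with $p_1$ Nash on $X(\R)$, then $p_1$ dominated by a polynomial). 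This is a genuine intermediate step, not purely a matter of citing KKS17/Li19, but it does not change the architecture of the proof and your conclusion is correct.
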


For the meaning of holomorphy for $\left(V_\pi \hat{\otimes} \Schw(X) \right)^\vee$-valued functions, see \S\ref{sec:conventions}.

\begin{remark}
	The theory is vacuous unless $\mathcal{N}_\pi(X^+) \neq \{0\}$, i.e.\ unless the representation $\pi$ is \emph{distinguished} by $X^+$.
\end{remark}

\begin{theorem}\label{prop:meromorphy}
	The zeta integrals $Z_\lambda$ extends meromorphically to all $\lambda \in \Lambda_{\CC}$. More precisely, fix $\eta \in \mathcal{N}_\pi(X^+)$ and $a \in \Lambda_{\Z}$ with $a \relg{X} 0$, then there exist
	\begin{itemize}
		\item $t \in \Z_{\geq 1}$ and affine hyperplanes $H_1, \ldots, H_t$ in $\Lambda_{\CC}$ whose vectorial parts $\vec{H_i}$ are all $\Q$-rational,
		\item a holomorphic function $\lambda \mapsto L(\eta, \lambda)$ on $\Lambda_{\CC}$, not identically zero,
	\end{itemize}
	such that
	\begin{itemize}
		\item the function in $\lambda$
		\[ LZ_\lambda(\eta, v, \xi) := L(\eta, \lambda) Z_\lambda(\eta, v, \xi), \quad (v, \xi) \in V_\pi \times \Schw(X), \]
		initially defined only for $\Re(\lambda) \relgeq{X} \kappa$, extends to a holomorphic function
		\[ \left[ \lambda \mapsto LZ_\lambda(\eta, \cdot, \cdot) \right]: \Lambda_{\CC} \to \mathrm{Bil}(V_\pi, \Schw(X)) \simeq \left( V_\pi \hat{\otimes} \Schw(X) \right)^\vee \]
		which yields the meromorphic continuation of $Z_\lambda$;
		\item the polar set of $Z_\lambda$ is a union of translates $H_i - ma$, for various $1 \leq i \leq t$ and $m \in \Z_{\geq 1}$.
	\end{itemize}
	Furthermore,
	\begin{enumerate}[(i)]
		\item one can take $L(\eta, \lambda) = \prod_{i=1}^m \Gamma(\alpha_i(\lambda))^{-1}$ where $\alpha_1, \ldots, \alpha_m$ are certain affine functions on $\Lambda_{\CC}$, whose gradients are among $\vec{H}_1, \ldots, \vec{H}_t$;
		\item the jointly continuous bilinear form $LZ_\lambda(\eta, \cdot, \cdot)$ on $(\pi \otimes |\omega|^\lambda) \times \Schw(X)$ is $G(\R)$-invariant for all $\lambda \in \Lambda_{\CC}$.
	\end{enumerate}
\end{theorem}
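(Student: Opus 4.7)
The plan is to apply the Bernstein--Sato $b$-function machinery, following the template of \cite{BD92, Li18}. First I would reduce to scalar-valued matrix coefficients via Lemma \ref{prop:avoid-density}: the integral becomes, up to a shift $\lambda \mapsto \lambda - \lambda_0$ with $\lambda_0 \in \frac{1}{4}\Lambda_{\Z}$, a Lebesgue integral of $\eta_0(v) \cdot |f|^{\lambda - \lambda_0} \cdot \xi_0$ against $|\Omega|$, where $\eta_0: V_\pi \to C^\infty(X^+;\CC)$ is continuous and $G(\R)$-equivariant. Fix once and for all $a \in \Lambda_{\Z}$ with $a \relg{X} 0$, so $|f|^a$ is polynomial on $X$.

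The essential input, furnished by \cite{Li19}, is that for each $v \in V_\pi^{K\text{-fini}}$ the real-analytic function $\eta_0(v)$ generates a holonomic $\mathscr{D}_{X^+_{\CC}}$-module. Since $\pi$ is SAF, $V_\pi^{K\text{-fini}}$ is finitely generated over $U(\mathfrak{g})$, so $\mathcal{M} := \mathscr{D}_{X^+_{\CC}}\cdot \eta_0(V_\pi^{K\text{-fini}})$ is a coherent, hence holonomic, $\mathscr{D}_{X^+_{\CC}}$-module. Bernstein's theorem with parameters should then produce a nonzero polynomial $b \in \CC[\lambda_1, \ldots, \lambda_r]$ and a $\lambda$-polynomial family $P_\lambda$ of global sections of $\mathscr{D}_{X^+_{\CC}}$ satisfying
\[
P_\lambda \bigl( u \cdot |f|^{\lambda + a} \bigr) \;=\; b(\lambda)\, u\, |f|^{\lambda}
\]
simultaneously for every section $u$ of $\mathcal{M}$, and in particular for $u = \eta_0(v)$ uniformly in $v$. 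The irreducible factors of $b$ may be taken affine with $\Q$-rational gradients since the polar data of $(G,\rho,X)$ descends to $\Q$ by Galois descent.

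Integrating against $\xi \in \Schw(X)$ for $\Re(\lambda) \relgg{X} 0$ and transferring $P_\lambda$ by formal adjoint yields
\[
b(\lambda)\, Z_\lambda(\eta, v, \xi) \;=\; Z_{\lambda + a}\bigl(\eta, v, P_\lambda^*\, \xi \bigr),
\]
the factor $|f|^{\lambda+a}$ absorbing any singularities of $P_\lambda^*\xi$ along $\partial X$. Iterating $N$ times produces $\prod_{k=0}^{N-1} b(\lambda+ka) \cdot Z_\lambda(\eta, v, \xi) = Z_{\lambda+Na}(\eta, v, Q_{N,\lambda}\xi)$ with $Q_{N,\lambda}$ polynomial in $\lambda$ and continuous on $\Schw(X)$. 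For any target $\lambda \in \Lambda_{\CC}$, choosing $N$ with $\Re(\lambda+Na) \relgg{X} 0$ puts the right-hand side into the range of convergence of Theorem \ref{prop:convergence}, giving meromorphic continuation with poles confined to $\bigcup_{k \geq 0}\{b(\lambda + ka) = 0\}$, i.e.\ translates $H_i - ka$ of the affine hyperplanes cut out by the factors of $b$.

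To convert the denominator into inverse $\Gamma$-factors, write $b = c \prod_j \alpha_j$ with $\alpha_j$ affine; the identity $\Gamma(z+1) = z\Gamma(z)$ makes each $\prod_{k=0}^{N-1} \alpha_j(\lambda+ka)$ a ratio of $\Gamma$-values, so setting $L(\eta, \lambda) := \prod_j \Gamma(\alpha_j(\lambda))^{-1}$ (after an affine rescaling matched to $a$) produces an $LZ_\lambda$ that extends to an entire $\mathrm{Bil}(V_\pi, \Schw(X))$-valued function. Joint continuity in $(v,\xi)$, boundedness on vertical strips and holomorphy are all inherited from the convergent representative at $\lambda + Na$; the extension from $V_\pi^{K\text{-fini}}$ to the full Fréchet space $V_\pi$ is handled by Casselman--Wallach automatic continuity. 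The $G(\R)$-invariance claim on $(\pi \otimes |\omega|^\lambda) \times \Schw(X)$ is trivial in the range of absolute convergence and propagates by analytic continuation. The main obstacle is producing the pair $(P_\lambda, b)$ uniformly in $v$ — precisely the point at which the $\mathcal{Z}(\mathfrak{g})$-finiteness of the SAF module $V_\pi$ (to control finite generation of $\mathcal{M}$) and the holonomicity furnished by \cite{Li19} (to apply Bernstein's existence theorem) are both indispensable.
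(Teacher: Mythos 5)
Your proposal identifies the right overall strategy — Bernstein--Sato $b$-functions applied to the holonomic $\mathscr{D}$-module generated by $\eta_0(v)$, with the holonomicity furnished by \cite{Li19} (the paper's Proposition \ref{prop:holonomicity}) — and this matches the paper's first stage, which follows \cite[Appendice]{BD92} and \cite[Theorem 6.8.2]{Li18}. However, your treatment of the second stage, passing from $K$-finite vectors to all of $V_\pi$, has a genuine gap.

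The issue is concentrated in your sentence asserting that $Q_{N,\lambda}$ is ``polynomial in $\lambda$ and continuous on $\Schw(X)$''. Bernstein's theorem for a holonomic $\mathscr{D}_{X^+_{\CC}}$-module produces an operator $P_\lambda$ with coefficients in $\R[X^+][\lambda] = \R[X][1/(f_1\cdots f_r)][\lambda]$, not in $\R[X][\lambda]$; its formal adjoint $P_\lambda^*$ therefore may blow up along $\partial X$, and $P_\lambda^*\xi$ need not lie in $\Schw(X)$ for $\xi \in \Schw(X)$. You acknowledge this by appealing to ``$|f|^{\lambda+a}$ absorbing the singularities,'' but that phrase only justifies convergence of the iterated integral; it does not produce an element of $\Schw(X)$ to feed into $Z_{\lambda+Na}$, so you cannot simply ``inherit'' joint continuity in $(v,\xi)$ from Theorem \ref{prop:convergence} applied at $\lambda + Na$. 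Controlling precisely this singular-coefficient issue is what the paper delegates to the Gelfand--Shilov principle \cite[Proposition 6.8.3]{Li18}, which you do not invoke. Relatedly, the appeal to ``Casselman--Wallach automatic continuity'' to extend from $V_\pi^{K\text{-fini}}$ to $V_\pi$ is a misapplication: automatic continuity concerns $(\mathfrak{g},K)$-module maps into SAF globalizations, not scalar-valued linear functionals with unknown growth. The paper instead writes $v = \sum_i \pi(\Xi_i)v_i$ via the Dixmier--Malliavin factorization $V_\pi = \pi(\mathcal{S}(G))V_\pi^{K\text{-fini}}$ and defines $LZ_\lambda(\eta, v, \xi) := \sum_i LZ_\lambda(\eta, v_i, \check{\Xi}_i\xi)$ with $\check{\Xi}_i\xi \in \Schw(X)$, and the Gelfand--Shilov estimate is needed to show that this is well-defined (independent of the factorization) and jointly continuous. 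That mechanism, not automatic continuity, is what fills the gap you left open.
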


In view of the results above, $Z_\lambda(\eta, v, \xi)$ may now be viewed as a meromorphic family of trilinear forms in $(\eta, v, \xi)$ on the whole $\Lambda_{\CC}$, which are jointly continuous (recall that $\dim_{\CC} \mathcal{N}_\pi(X^+) < +\infty$).

For the dual triplet $(G, \check{\rho}, \check{X})$, we also form the space $\mathcal{N}_\pi(\check{X}^+)$. Since $\mathbf{X}^*_\rho(G) = \mathbf{X}^*_{\check{\rho}}(G)$, the same $\Lambda_{\CC}$ parameterizes both zeta integrals $Z_\lambda$ (on $X$) and $\check{Z}_\lambda$ (on $\check{X}$). Fix basic relative invariants $\check{f}_1, \ldots, \check{f}_r$ for $(G, \check{\rho}, \check{X})$ and write $|\check{f}|^\lambda := \prod_{i=1}^r |\check{f}_i|^{\lambda_i}$, by which we define the zeta integrals $\check{Z}_\lambda$. Observe that $|\omega|^\lambda: G(\R) \to \R^\times_{>0}$ does not depend on the eigencharacters $\omega_1, \ldots, \omega_r$: it is determined by $\lambda \in \Lambda_{\CC} \subset \mathbf{X}^*(G) \otimes \CC$, thus the notation works uniformly for both $(G, \rho, X)$ and $(G, \check{\rho}, \check{X})$.

Now comes the local functional equation. For $\pi$ as above and $\eta \in \mathcal{N}_\pi(X^+)$, put
\begin{equation}\label{eqn:eta-lambda}
	\eta_\lambda: v \mapsto \eta(v) |f|^\lambda
\end{equation}
which belongs to $\mathcal{N}_{\pi \otimes |\omega|^\lambda}(X^+)$ for all $\lambda \in \Lambda_{\CC}$. The same definition applies to $\check{X}^+$ as well.

\begin{theorem}\label{prop:LFE}
	Assume that $\pi$ is an SAF representation with central character. There is a meromorphic family of linear maps $\gamma(\pi, \lambda): \mathcal{N}_\pi(\check{X}^+) \to \mathcal{N}_\pi(X^+)$ (i.e.\ its matrix entries are meromorphic in $\lambda$), depending on $\psi$, such that
	\[ \check{Z}_\lambda\left( \check{\eta}, v, \mathcal{F}\xi \right) = Z_\lambda\left( \gamma(\lambda, \pi)(\check{\eta}), v, \xi \right). \]
	for all $\check{\eta} \in \mathcal{N}_\pi(\check{X}^+)$, $v \in V_\pi$, $\xi \in \Schw(X)$ and all $\lambda \in \Lambda_{\CC}$ off the poles of $Z_\lambda$ and $\check{Z}_\lambda$.
	
	Moreover:
	\begin{enumerate}[(i)]
		\item $\gamma(\pi, \lambda)$ is uniquely characterized by this equality;
		\item if $L(\check{\eta}, \lambda)$ is as in Theorem \ref{prop:meromorphy}, then $L(\check{\eta}, \lambda) \gamma(\pi, \lambda)$ is holomorphic in $\lambda$;
		\item $\gamma(\pi, \lambda + \mu)(\check{\eta})_\lambda = \gamma(\pi \otimes |\omega|^\lambda, \mu)(\check{\eta}_\lambda)$ for all $\mu, \lambda, \check{\eta}$; see \eqref{eqn:eta-lambda}.
	\end{enumerate}
\end{theorem}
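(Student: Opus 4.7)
My proof will follow the two-step path indicated in the introduction. First I will prove a \emph{weak functional equation} in which the test function $\xi$ is restricted to $C^\infty_c(X^+)$; this already produces the map $\gamma(\pi,\lambda)$ and pins it down uniquely, yielding parts (i)-(iii) of the statement. I will then upgrade this to the full equation on all of $\Schw(X)$ by means of Capelli operators on $\check{X}$ and an analytic twist depending on $\lambda$. Throughout, Lemma \ref{prop:holomorphy-test} will be my workhorse for detecting holomorphy and equality of meromorphic families in the finite-dimensional space $\mathcal{N}_\pi(X^+)$.

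\textbf{Construction of the $\gamma$-factor.} Fix $\check{\eta}\in\mathcal{N}_\pi(\check{X}^+)$ and let $L(\check{\eta},\lambda)$ be the holomorphic denominator of Theorem \ref{prop:meromorphy}. For $v\in V_\pi^{K\text{-fini}}$ I view
\[ T_\lambda(v): \xi \longmapsto L(\check\eta,\lambda)\check{Z}_\lambda(\check{\eta}, v, \mathcal{F}\xi), \qquad \xi\in\Schw(X), \]
as a tempered distribution on $X(\R)$, holomorphic in $\lambda\in\Lambda_{\CC}$. Because $\pi$ has a central character and $v$ is $K$-finite, $T_\lambda(v)$ is both $\mathcal{Z}(\mathfrak{g})$-finite and $K$-finite; by picking an elliptic element of $\mathcal{Z}(\mathfrak{g})$ and applying the elliptic regularity theorem, the restriction of $T_\lambda(v)$ to $X^+(\R)$ lies in $C^\infty(X^+)$. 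The map $v\mapsto T_\lambda(v)|f|^{-\lambda}$ then defines a $(\mathfrak{g},K)$-homomorphism into $C^\infty(X^+)$; automatic continuity (as provided by \cite{Li19}) extends it uniquely to an element $\gamma_L(\lambda)(\check\eta)\in\mathcal{N}_\pi(X^+)$. To check holomorphy of this element in $\lambda$, I combine Lemma \ref{prop:holomorphy-test} with the decomposition in Proposition \ref{prop:rho-decomposition} and the orbitwise restriction statement in Proposition \ref{prop:N-restriction}; dividing through by $L(\check{\eta},\lambda)$ yields the desired meromorphic family $\gamma(\pi,\lambda)$. Uniqueness (i) holds because $C^\infty_c(X^+)$-test vectors already separate points of the finite-dimensional $\mathcal{N}_\pi(X^+)^\vee$; part (ii) is built into the construction; and part (iii) follows from the identity $\check{Z}_{\lambda+\mu}(\check\eta,v,\mathcal{F}\xi)=\check{Z}_\mu(\check\eta_\lambda,v,\mathcal{F}\xi)$ combined with uniqueness.

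\textbf{Extension from $C^\infty_c(X^+)$ to $\Schw(X)$ via Capelli twists.} Set
\[ \Delta_\lambda(\check\eta,v,\xi) := \check{Z}_\lambda(\check\eta,v,\mathcal{F}\xi) - Z_\lambda(\gamma(\pi,\lambda)(\check\eta),v,\xi); \]
by the first step $\Delta_\lambda$ vanishes on $C^\infty_c(X^+)$ test functions. Choose a polynomial relative invariant $h\in\R[X]$ cutting out $\partial X$, with eigencharacter $\theta$, as in Corollary \ref{prop:rel-invariant-pair}. For $M\gg 0$, elements of $h^M\Schw(X)$ are sufficiently vanishing along $\partial X$ that $\Delta_\lambda(\check\eta,v,h^M\xi)=0$ for every $\xi\in\Schw(X)$. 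Lemma \ref{prop:F-commute} converts multiplication by $h^M$ on $\Schw(X)$ into the action of the constant-coefficient operator $\check h^M$ on $\Schw(\check X)$, and the $G$-invariance coming from the Capelli operator $\mathcal{C}\in\mathcal{D}(\check X)^G$ lets me rewrite the vanishing, together with the uniqueness in the weak functional equation, as
\[ \Delta_{\lambda-M\theta}({}^\lambda\check\eta,v,\xi)=0, \qquad \xi\in\Schw(X), \]
where $\check\eta\mapsto{}^\lambda\check\eta$ is the holomorphic endomorphism of $\mathcal{N}_\pi(\check X^+)$ given by the analytic twist $\mathcal{C}_\lambda$ of the Capelli operator acting through Definition \ref{def:D-action-N}. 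The full functional equation follows as soon as ${}^\lambda(\cdot)$ is generically invertible, since then any $\check\eta$ is of the form ${}^\lambda\check\eta'$ after a suitable shift.

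\textbf{Main obstacle.} The serious difficulty is the generic invertibility of the Capelli twist on $\mathcal{N}_\pi(\check X^+)$. I will decompose this finite-dimensional space into generalized eigenspaces for the commutative algebra $\mathcal{D}(\check X^+_{\CC})^{G_{\CC}}$ (a decomposition that exists because each $\eta$ is $\mathcal{Z}(\mathfrak{g})$-finite and hence $\mathcal{D}(\check X^+_{\CC})^{G_{\CC}}$-finite), then compute the eigenvalue of $\mathcal{C}_\lambda$ on each block by means of Knop's Harish-Chandra isomorphism \cite{Kn94, Kn98}. The key input is Knop's leading-term formula in \cite{Kn98}: it expresses the top-degree part of this eigenvalue, as a polynomial in $\lambda$, in terms of a non-degenerate relative invariant of $(G,\check\rho,\check X)$, whose existence is guaranteed by the regularity supplied by Corollary \ref{prop:rel-invariant-pair}. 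This non-vanishing of the top symbol forces the eigenvalue to be a non-zero polynomial in $\lambda$ on each generalized eigenspace, from which generic invertibility, and hence the theorem, follows.
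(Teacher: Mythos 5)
Your proposal follows essentially the same two-step strategy as the paper's proof: first constructing the $\gamma$-factor via the tempered distribution $T_\lambda(v)$, elliptic regularity, automatic continuity, and the decomposition of Proposition \ref{prop:rho-decomposition}/\ref{prop:N-restriction}; then upgrading from $C^\infty_c(X^+)$ to $\Schw(X)$ via the Capelli operator, its analytic twist, and Knop's leading-term formula for generic invertibility. One small imprecision worth flagging: no element of $\mathcal{Z}(\mathfrak{g})$ alone is elliptic on $X^+(\R)$ in general --- the standard argument (as in the reference the paper invokes) combines $\mathcal{Z}(\mathfrak{g})$-finiteness with $K$-finiteness to manufacture the required elliptic operator, and the uniform-in-$\lambda$ order bound on $\Delta_\lambda(\check\eta,v,\cdot)$ that lets you choose a single $M$ working on all of $U$ also deserves explicit justification (the paper does this via Propositions \ref{prop:order-Fourier} and \ref{prop:order-estimate}).
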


The Theorems \ref{prop:convergence}, \ref{prop:meromorphy} and \ref{prop:LFE} are stated as axioms in \cite[Chapter 4]{LiLNM} in an abstract setting; as to the special case of prehomogeneous vector spaces, see also \cite[Chapter 6]{LiLNM}. Theorems \ref{prop:convergence} and \ref{prop:meromorphy} have also appeared in \cite{Li18} when $X^+$ is an ``essentially symmetric'' homogeneous $G$-space. It is routine to see that these assertions are unaffected by the choice of basic relative invariants $f_1, \ldots, f_r$; see \cite[Lemma 4.5.4]{LiLNM} for a precise statement.

The proofs of the Theorems above will occupy the rest of this article.

We close this subsection by addressing the dependence of $\gamma$-factors on the additive character $\psi$ of $\R$. Let $a \in \R^\times$ and define:
\begin{itemize}
	\item $d(\lambda) = \sum_{i=1}^r \lambda_i \cdot \deg \check{f}_i$ for all $\lambda = \sum_{i=1}^r \omega_i \otimes \lambda_i \in \Lambda_{\CC}$;
	\item $m_a: C^\infty(\check{X}^+) \to C^\infty(\check{X}^+)$ is the pull-back along the automorphism $\nu_{a^{-1}}: y \mapsto a^{-1} y$ of $\check{X}^+(\R)$, so that $m_a$ is $G(\R)$-equivariant.
\end{itemize}
We write $\gamma(\pi, \lambda) = \gamma(\pi, \lambda; \psi)$, $\mathcal{F} = \mathcal{F}_\psi$, etc. Denote the $\gamma$-factor defined relative to $(G, \check{\rho}, \check{X})$ and $\psi$ by $\check{\gamma}(\pi, \lambda; \psi)$.

\begin{proposition}\label{prop:gamma-symmetry}
	For any $a \in \R^\times$, let $\psi_a$ be the additive character $x \mapsto \psi(ax)$ of $\R$. Then
	\begin{enumerate}[(i)]
		\item $\check{\gamma}(\pi, \lambda; \psi_{-1})\gamma(\pi, \lambda; \psi) = A(\psi) \cdot \identity_{\mathcal{N}_\pi(\check{X}^+)}$ as meromorphic families in $\lambda$, where $A(\psi) \in \R_{> 0}$ is as in Definition \ref{def:A-psi};
		\item $\gamma(\pi, \lambda; \psi_a) = |a|^{-d(\lambda) - \frac{1}{2} \dim X} \gamma(\pi, \lambda; \psi) \circ m_a$ for all $a \in \R^\times$, as meromorphic families in $\lambda$.
	\end{enumerate}
\end{proposition}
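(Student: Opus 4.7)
The plan is to deduce both parts directly from the defining functional equation in Theorem \ref{prop:LFE} together with the Fourier formulas compiled in \S\ref{sec:Fourier}, invoking uniqueness of $\gamma$-factors to peel off the integrals. Throughout, I would work with $\lambda$ in the common region of holomorphy of all the zeta integrals involved, the general case following by meromorphic continuation.

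For part (i), the key ingredient is Proposition \ref{prop:F-inversion}, which gives $\mathcal{F}_{\psi_{-1}} \circ \mathcal{F}_\psi = A(\psi) \cdot \identity_{\Schw(X)}$ (noting $\psi_{-1} = -\psi$ in the usual additive notation). Starting from $\check{\eta} \in \mathcal{N}_\pi(\check{X}^+)$ and $\xi \in \Schw(X)$, I would apply the functional equation for $(G,\rho,X)$ with $\psi$ to $\check{Z}_\lambda(\check{\eta}, v, \mathcal{F}_\psi \xi)$, producing $Z_\lambda(\gamma(\pi,\lambda;\psi)(\check{\eta}), v, \xi)$, and then apply the dual functional equation for $(G, \check{\rho}, \check{X})$ with $\psi_{-1}$ to $Z_\lambda(\gamma(\pi,\lambda;\psi)(\check{\eta}), v, \mathcal{F}_{\psi_{-1}} \mathcal{F}_\psi \xi)$, using $\mathcal{F}_{\psi_{-1}}\mathcal{F}_\psi \xi = A(\psi)\xi$. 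Comparing the two expressions for $\check{Z}_\lambda\bigl(\check{\gamma}(\pi,\lambda;\psi_{-1}) \gamma(\pi,\lambda;\psi)(\check{\eta}), v, \mathcal{F}_\psi \xi\bigr)$ and invoking the uniqueness clause of Theorem \ref{prop:LFE} (together with the fact that $\mathcal{F}_\psi$ is an isomorphism $\Schw(X) \rightiso \Schw(\check{X})$) yields the identity in (i).

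For part (ii), the starting point is the scaling formula \eqref{eqn:Fourier-a}: $\mathcal{F}_{\psi_a}\xi = |a|^{-\dim X/2}\, \nu_a^*\bigl(\mathcal{F}_\psi \xi\bigr)$. I would then analyze $\check{Z}_\lambda(\check{\eta}, v, \nu_a^* \check{\xi}')$ by change of variables on $\check{X}^+(\R)$ via the diffeomorphism $\nu_a$. Writing the integrand as a density, pulling back by $\nu_{a^{-1}}$, and using that densities are invariant under pullback by diffeomorphisms under integration, the factor $\check{\eta}(v)$ transforms into $\nu_{a^{-1}}^*\check{\eta}(v) = (m_a \check{\eta})(v)$, while $|\check{f}|^\lambda$ transforms into $|a|^{-d(\lambda)}|\check{f}|^\lambda$ by homogeneity of each $\check{f}_i$ (degree $\deg \check{f}_i$). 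This gives
\[ \check{Z}_\lambda(\check{\eta}, v, \mathcal{F}_{\psi_a}\xi) = |a|^{-\dim X/2 - d(\lambda)}\, \check{Z}_\lambda(m_a \check{\eta}, v, \mathcal{F}_\psi \xi) = |a|^{-d(\lambda)-\dim X/2}\, Z_\lambda\bigl(\gamma(\pi,\lambda;\psi)\circ m_a(\check{\eta}), v, \xi\bigr), \]
after applying the functional equation for $\psi$ in the second step. Comparing with the defining equation of $\gamma(\pi,\lambda;\psi_a)$ and using uniqueness in Theorem \ref{prop:LFE} gives the formula in (ii).

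Neither step presents a serious obstacle; the only real work is the bookkeeping in part (ii), where one must carefully verify that the change of variables on the density-valued integrand really produces the factors $|a|^{-d(\lambda)}$ and $(m_a\check{\eta})(v)$ as written, and that $m_a$ is $G(\R)$-equivariant (so that $m_a\check{\eta} \in \mathcal{N}_\pi(\check{X}^+)$). The $G(\R)$-equivariance is immediate because $\nu_{a^{-1}}$ commutes with every $\check{\rho}(g) \in \GL(\check{X})$, and the density calculation is routine once one trivializes $\mathcal{L}^{1/2}$ as in Lemma \ref{prop:avoid-density} (the choice of trivialization being irrelevant by the invariance of the final identity).
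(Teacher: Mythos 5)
Your proposal is correct and follows essentially the same approach as the paper: both assertions rest on the uniqueness of the $\gamma$-factor together with, respectively, the Fourier inversion formula (Proposition \ref{prop:F-inversion}) for (i) and the scaling formula \eqref{eqn:Fourier-a} plus a change of variables on the density-valued integrand for (ii). Your computation in (ii) reproduces the paper's chain of equalities line by line (with correct identification of $\nu_{a^{-1}}^*\bigl(\check{\eta}(v)\bigr) = (m_a\check{\eta})(v)$ and $\nu_{a^{-1}}^*|\check{f}|^\lambda = |a|^{-d(\lambda)}|\check{f}|^\lambda$), and your filled-in argument for (i) is exactly what the paper leaves implicit when it says ``Assertion (i) results from Proposition \ref{prop:F-inversion}.''
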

\begin{proof}
	Both assertions rely on the uniqueness of $\gamma$-factors in Theorem \ref{prop:LFE}. Assertion (i) results from Proposition \ref{prop:F-inversion}. As to (ii), we apply \eqref{eqn:Fourier-a} to see that when $\Re(\lambda) \relgg{\check{X}} 0$,
	\begin{multline*}
		\check{Z}_\lambda\left( \check{\eta}, v, \mathcal{F}_{\psi_a} \xi \right) = |a|^{-\dim X /2} \int_{\check{X}^+(\R)} \check{\eta}(v) |\check{f}|^\lambda \nu_a^* (\mathcal{F}_\psi \xi) \\
		= |a|^{-\dim X /2} \int_{\check{X}^+(\R)} \nu_{a^{-1}}^* \left( \check{\eta}(v) | \check{f} |^\lambda \right) \mathcal{F}_\psi \xi \quad \text{($\because$ change of variables)} \\
		= |a|^{-d(\lambda) - (\dim X /2)} \int_{\check{X}^+(\R)} m_a (\check{\eta}(v)) \cdot |\check{f}|^\lambda \cdot \mathcal{F}_\psi \xi \\
		= Z_\lambda \left( |a|^{-d(\lambda) - (\dim X /2)} \gamma(\lambda, \pi; \psi) \circ m_a (\check{\eta}), v, \xi \right).
	\end{multline*}
	The equality extends by meromorphic continuation, and (ii) follows.
\end{proof}

\begin{remark}\label{rem:sd-version}
	If we define $\gamma^{\mathrm{sd}}(\pi, \lambda; \psi) := A(\psi)^{-1/2} \gamma(\pi, \lambda; \psi)$, i.e.\ by replacing $\mathcal{F}_\psi$ by its ``self-dual'' version $\mathcal{F}^{\mathrm{sd}}_\psi := A(\psi)^{-1/2} \mathcal{F}_\psi$ (Remark \ref{rem:selfdual-F}) in the characterization of $\gamma$-factors, the conclusions become
	\begin{enumerate}[(i)]
		\item $\check{\gamma}^{\mathrm{sd}}(\pi, \lambda; \psi_{-1})\gamma^{\mathrm{sd}}(\pi, \lambda; \psi) = \identity_{\mathcal{N}_\pi(\check{X}^+)}$,
		\item $\gamma^{\mathrm{sd}}(\pi, \lambda; \psi_a) = |a|^{-d(\lambda)} \gamma^{\mathrm{sd}}(\pi, \lambda; \psi) \circ m_a$,
	\end{enumerate}
	where in (ii) we used $A(\psi_a) = |a|^{-\dim X} A(\psi)$. In particular $\gamma^{\mathrm{sd}}(\pi, 0; \psi_a) = \gamma^{\mathrm{sd}}(\pi, 0; \psi) \circ m_a$. This mirrors the behavior of local root numbers over $\R$ described in \cite[(3.6.6)]{Ta79}, which is also formulated in terms of self-dual Haar measures. See also Remark \ref{rem:GJ-gamma-behavior}.
	
	An equivalence way is to keep the formula for $\mathcal{F}_\psi$, but renormalize the Haar measure on $\R$ to be self-dual with respect to $\psi$; this also normalizes the integration of densities. See \cite[Lemma 6.1.4]{LiLNM}.
\end{remark}

\subsection{Examples}\label{sec:examples}

In all the examples below, we fix $\Omega \in \topwedge \check{X}$, $\Psi \in \topwedge X$ with $\lrangle{\Omega, \Psi} = 1$.

\begin{example}[Sato--Shintani]\label{eg:SS}
	Let $(G, \rho, X)$ be as in Hypothesis \ref{hyp:PVS}, but take $\pi = \mathbf{1}$, the trivial representation. Decompose $X^+(\R)$ into $G(\R)$-orbits $\bigsqcup_{i=1}^k O_i$. For $1 \leq i \leq k$, let $c_i$ be the function on $X^+(\R)$ which is $1$ on $O_i$ and zero elsewhere. Take an invariant half-density $|\phi|^{-1/4} |\Omega|^{1/2}$ as in Lemma \ref{prop:avoid-density}, with $|\phi|^{1/4} = |f|^{\lambda_0}$ where $\lambda_0 \in \frac{1}{4} \Lambda_{\Z}$ (see Definition \ref{def:zeta}). Then
	\[\begin{tikzcd}[row sep=tiny]
		\CC^k \arrow[r, "\sim"] & \mathcal{N}_{\mathbf{1}}(X^+) \\
		(0, \ldots, \underbracket{\; 1 \;}_{i\text{-th}}, \ldots, 0) \arrow[mapsto, r] & \eta_i := c_i |\phi|^{-1/4} |\Omega|^{1/2}.
	\end{tikzcd}\]
	By choosing a non-degenerate relative invariant to obtain $X^+ \rightiso \check{X}^+$, the $G(\R)$-orbits in $X^+(\R)$ and $\check{X}^+(\R)$ are in bijection, both labeled by $\{1, \ldots, k\}$. In particular we can define $\check{\eta}_1, \ldots, \check{\eta}_k$.

	For $\phi$, $\lambda_0$ as above, $1 \leq i \leq k$ and $\Re(\lambda) \relgg{X} 0$, we obtain
	\begin{equation*}
		Z_\lambda\left( \eta_i, 1, \xi \right) = \int_{O_i} |f|^{\lambda - \lambda_0} \xi_0 |\Omega|.
	\end{equation*}
	By recalling the definition of $\lambda_0$ and identifying $\Lambda_{\CC}$ with $\CC^r$ by the basis $\omega_1, \ldots, \omega_r$, we see that $Z_\lambda\left( \eta_i, 1, \xi \right)$ equals the Archimedean local zeta integral $Z_i(\lambda + \lambda_0, \xi_0)$ defined in \cite[\S 1.4]{Sa89}.

	For $\check{X}$ we have $\check{Z}_i(\cdots)$ as well. Observe that
	\begin{compactitem}
		\item the avatar of $\lambda_0$ for $\check{X}$ is $-\lambda_0$;
		\item $\mathbf{X}^*_\rho(G) \otimes \CC = \mathbf{X}^*_{\check{\rho}}(G) \otimes \CC$, but their isomorphisms to $\CC^r$ induced by basic eigencharacters differ by $-1$, by Proposition \ref{prop:rel-invariant-opposite}.
	\end{compactitem}

	Use the standard additive character $\psi$ in Example \ref{eg:psi-standard} to perform Fourier transform. Together with the observations above, the Theorem $\mathbf{R}$ in \cite[p.471]{Sa89} gives meromorphic functions $\left( \Gamma_{ij} \right)_{1 \leq i,j \leq k}$ such that for all $\xi = \xi_0 |\Psi|^{1/2} \in \Schw(\check{X})$,
	\[ Z_i \left( \lambda, \mathcal{F} \xi_0 \right) = \sum_{j=1}^k \Gamma_{ij}(\lambda - 2\lambda_0) \check{Z}_j \left( \lambda - 2\lambda_0, \xi_0 \right). \]
	This can be rewritten as
	\[ Z_{\lambda - \lambda_0} \left( \eta_i, 1, \mathcal{F} \xi \right) = \sum_{j=1}^k \Gamma_{ij}(\lambda - 2\lambda_0) \check{Z}_{\lambda - \lambda_0} \left( \check{\eta}_j, 1, \xi \right). \]
	
	In our framework, $\check{\gamma}(\lambda, \mathbf{1}; \psi)$ is thus represented by the matrix $\left( \Gamma_{ij}(\lambda - \lambda_0) \right)_{1 \leq i, j \leq k}$ of meromorphic functions, with respect to the bases $\{\eta_i\}_i$ and $\{\check{\eta}_j\}_j$. Proposition \ref{prop:gamma-symmetry} implies that $\gamma(\lambda, \mathbf{1}; \psi) = \gamma(\lambda, \mathbf{1}, \psi_{-1}) \circ m_{-1}$ is represented by the matrix $\left( \Gamma_{ij}(\lambda - \lambda_0) \right)_{1 \leq i, j \leq k}^{-1} \cdot P_{-1}$, where $P_{-1}$ is the matrix corresponding to the permutation of the $G(\R)$-orbits in $\check{X}^+(\R)$ induced by $y \mapsto -y$.
\end{example}

\begin{example}[Godement--Jacquet]\label{eg:GJ}
	Let $D$ be a central simple $\R$-algebra of dimension $n^2$ and let $G = D^\times \times D^\times$ act on $X := D$, by
	\begin{equation*}\begin{tikzcd}
		x \arrow[mapsto, r, "{(g, h)}"] & h^{-1} x g.
	\end{tikzcd}\end{equation*}
	This is a regular prehomogeneous vector space with $X^+ = D^\times$, which is spherical (in fact, the ``group case'' of a symmetric space). The relative invariants are generated by the reduced norm $\mathrm{Nrd}$, up to $\R^\times$. Accordingly, $\mathbf{X}^*_\rho(G)$ is generated by $(g, h) \mapsto \mathrm{Nrd}(h)^{-1} \mathrm{Nrd}(g)$.
	
	We may identify $X$ with $\check{X}$ via the perfect pairing $(x, y) \mapsto \mathrm{Trd}(xy)$ on $X \times X$, where $\mathrm{Trd}$ is the reduced trace. As discussed in \cite[Lemma 6.4.1]{LiLNM}, $(G, \check{\rho}, \check{X})$ then becomes $X$ with the flipped action $x \xmapsto{(g, h)} g^{-1} x h$, and $(G, \rho, X)$ is regular. In fact $\mathrm{Nrd}$ is non-degenerate, and the induced equivariant isomorphism $X^+ \rightiso \check{X}^+$ is $x \mapsto x^{-1}$; cf.\ \cite[Proposition 6.4.2]{LiLNM}. We still write $X^+$ and $\check{X}^+$ in order to distinguish the $G$-actions. Note that $\mathrm{Nrd}$ is a basic relative invariant for both $X$ and $\check{X}$, but with opposite eigencharacters.

	The irreducible SAF representations $\pi$ with $\mathcal{N}_\pi(X^+) \neq \{0\}$ take the form $\sigma \boxtimes \check{\sigma}$, where $\sigma$ is an irreducible SAF representation of $D^\times(\R)$. Ditto for $\mathcal{N}_\pi(\check{X}^+)$. Note that $|\det|^{-n} |\Omega|$ defines a Haar measure on $D^\times(\R)$. The spaces $\mathcal{N}_{\sigma \boxtimes \check{\sigma}}(X^+)$ and $\mathcal{N}_{\sigma \boxtimes \check{\sigma}}(\check{X}^+)$ are spanned respectively by matrix coefficient maps
	\begin{align*}
		\eta_{\Omega}: v \otimes \check{v} & \mapsto \lrangle{\check{v}, \pi(\cdot) v} |\det|^{-n/2} |\Omega|^{1/2}, \\
		\check{\eta}_{\Psi}: v \otimes \check{v} & \mapsto \lrangle{\check{\pi}(\cdot) \check{v}, v} |\det|^{-n/2} |\Psi|^{1/2}.
	\end{align*}

	With these choices, we write $\xi = \xi_0 |\Omega|^{1/2} \in \Schw(X)$, $\check{\xi} = \check{\xi}_0 |\Psi|^{1/2} \in \Schw(\check{X})$, and let $Z^{\mathrm{GJ}}(\cdots)$ (resp.\ $\gamma^{\mathrm{GJ}}(\cdots)$) stand for the usual Godement--Jacquet zeta integrals in \cite[(15.4.3)]{GH11-2} (resp.\ the usual Godement--Jacquet $\gamma$-factors). Here we choose the standard additive character $\psi$ as in Example \ref{eg:psi-standard}. It turns out that
	\begin{align*}
		Z_\lambda \left( \eta_{\Omega}, v \otimes \check{v}, \xi \right) & = Z^{\mathrm{GJ}}\left( \lambda + \frac{1}{2}, \lrangle{\check{v}, \pi(\cdot) v}, \xi_0 \right), \\
		\check{Z}_\lambda \left( \check{\eta}_{\Psi}, v \otimes \check{v}, \xi \right) & = Z^{\mathrm{GJ}}\left( -\lambda + \frac{1}{2}, \lrangle{\check{\pi}(\cdot) \check{v}, v}, \xi_0 \right), \\
		\gamma(\sigma \boxtimes \check{\sigma}, \lambda)\left( \check{\eta}_{\Psi} \right) & = \gamma^{\mathrm{GJ}}\left( \lambda + \frac{1}{2}, \sigma \right) (\eta_\Omega) ,
	\end{align*}
	so that functional equation in Theorem \ref{prop:LFE} reduces to the usual Godement--Jacquet functional equation. We refer to \cite[\S 6.4]{LiLNM} for detailed explanations when $D$ is split; the general case is analogous.
\end{example}

\begin{remark}\label{rem:GJ-gamma-behavior}
	In the Godement--Jacquet case, the automorphism $\check{\eta} \mapsto m_a \circ \check{\eta}$ of $\mathcal{N}_\pi(\check{X}^+)$ (Proposition \ref{prop:gamma-symmetry}) is given by $\omega_\sigma(a)^{-1} \cdot \identity$, for all $a \in \R^\times$.
\end{remark}

\begin{remark}
	If a general additive character $\psi$ is used, one has to use the self-dual versions $\mathcal{F}^{\mathrm{sd}}_\psi$ and $\gamma^{\mathrm{sd}}$ (Remark \ref{rem:sd-version}) in the functional equation to regain compatibility with Godement--Jacquet.
\end{remark}

\subsection{The complex case}\label{sec:cplx-case}

The Hypothesis \ref{hyp:PVS} and the main Theorems in \S\ref{sec:main-thm} can all be formulated over $\CC$. The complex case can be reduced to the previous case over $\R$ as follows.

Let us write $\Res := \Res_{\CC | \R}$ for the functor of restriction of scalars à la Weil along $\CC | \R$, applied to $\CC$-varieties, etc. If a triplet $(G, \rho, X)$ over $\CC$ satisfies Hypothesis \ref{hyp:PVS}, so does $(\Res G, \Res \rho, \Res X)$; recall that $(\Res X)(\R) = X(\CC)$, $(\Res X^+)(\R) = X^+(\CC)$, $(\Res G)(\R) = G(\CC)$. There are canonical isomorphisms
\[\begin{tikzcd}
	\mathbf{X}^*(\Res G) \arrow[r, "\sim"] & \mathbf{X}^*(G) \\
	\mathbf{X}^*_{\Res \rho}(\Res G) \arrow[r, "\sim"] \arrow[phantom, u, "\subset" description, sloped] & \mathbf{X}^*_\rho(G) \arrow[phantom, u, "\subset" description, sloped] .
\end{tikzcd}\]

If $f \in \CC(X)$ is a relative invariant of eigencharacter $\omega \in \mathbf{X}^*_\rho(G)$, then its norm $f \cdot \overline{f} \in \R(\Res X)$ has the eigencharacter in $\mathbf{X}^*_{\Res \rho}(\Res G)$ corresponding to $\omega$.

Taking contragredient commutes with $\Res$, since the pairing $\Tr_{\CC|\R} \circ \lrangle{\cdot, \cdot}: X(\CC) \times \check{X}(\CC) \to \R$ is perfect. Fix an additive character $\psi$ of $\R$ and take the additive character $\psi_{\CC} := \psi \circ \Tr_{\CC|\R}$ of $\CC$. The Schwartz spaces for $X(\CC)$ and $(\Res X)(\R)$ can be identified, and so do the Fourier transforms.

Finally, an SAF representation $\pi$ of $G(\CC)$ is the same as an SAF representation of $(\Res G)(\R)$ on the same Fréchet space. The recipes above identifies $\mathcal{N}_\pi(X^+)$ (over $\CC$) with $\mathcal{N}_\pi(\Res X^+)$ (over $\R$). Hence the generalized matrix coefficients on $X^+(\CC)$ are the same as those on $(\Res X^+)(\R)$. All in all, the generalized zeta integral (Definition \ref{def:zeta}) for $(G, \rho, X)$ reduces immediately to the case for $(\Res G, \Res \rho, \Res X)$, and the theorems in \S\ref{sec:main-thm} carry over verbatim.

\section{Convergence and meromorphic continuation}\label{sec:conv-mero}
Throughout this section, we fix a triplet $(G, \rho, X)$ as in Hypothesis \ref{hyp:PVS} and an SAF representation $\pi$ of $G(\R)$.

\subsection{Proof of convergence}

Fix $\eta \in \mathcal{N}_\pi(X^+)$ and write $\eta = \eta_0 |\Omega|^{1/2}$ for some chosen $\Omega \in \topwedge \check{X} \smallsetminus \{0\}$.

The argument for Theorem \ref{prop:convergence} is the same as that of \cite[Theorem 6.6.4]{Li18}. It is based on the following result. Some terminologies from real algebraic geometry such as Nash functions will be needed; we refer to \cite[Chapter 8]{BCR98} for details.

\begin{proposition}\label{prop:soft-bound}
	There exist a continuous semi-norm $q: V_\pi \to \R_{\geq 0}$ and a Nash function $p: X^+(\R) \to \R_{\geq 0}$, such that
	\begin{equation*}
		|\eta_0(v)(x)| \leq q(v) p(x), \quad v \in V_\pi, \; x \in X^+(\R).
	\end{equation*}
\end{proposition}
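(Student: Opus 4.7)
The plan is to reduce to scalar-valued matrix coefficients on each $G(\R)$-orbit and then invoke a known moderate-growth estimate for generalized matrix coefficients on (real) spherical homogeneous spaces.

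First, via Lemma \ref{prop:avoid-density} and the discussion in Remark \ref{rem:eta-0}, it suffices to prove the statement for the scalar-valued $\eta_0: V_\pi \to C^\infty(X^+(\R); \CC)$; the trivializing factor $|\phi|^{1/4}|\Omega|^{-1/2}$ relating $\eta$ to $\eta_0$ is Nash on $X^+(\R)$, hence can be absorbed into $p$ at the end. Next, decompose $X^+(\R) = O_1 \sqcup \cdots \sqcup O_k$ into its finitely many $G(\R)$-orbits, each of which is open and closed in $X^+(\R)$. It clearly suffices to establish the bound on each $O_i$ separately and take a maximum, after extending each $p_i$ by zero outside $O_i$ (this will still be semi-algebraic and bounded by a Nash function on $X^+(\R)$ after enlarging).

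Fix such an orbit $O_i$, pick $x_i \in O_i$ with stabilizer $H_i \subset G(\R)$, and identify $O_i \simeq H_i \backslash G(\R)$ via $g \mapsto x_i g$. Then composition of $\eta_0$ with evaluation at $x_i$ produces a continuous $H_i$-invariant linear functional $\ell_i: V_\pi \to \CC$, and the crucial identity
\[ \eta_0(v)(x_i g) = \ell_i\bigl( \pi(g) v \bigr), \qquad g \in G(\R), \; v \in V_\pi, \]
reduces our task to bounding the generalized matrix coefficient $g \mapsto \ell_i(\pi(g) v)$ on $H_i \backslash G(\R)$.

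Here is where sphericity of $X^+$ enters: $H_i \backslash G(\R)$ is a real spherical homogeneous space, so one may invoke the uniform moderate-growth theorem for such matrix coefficients (in the form given in \cite{KKS17} for the symmetric case, or in the general spherical setting treated in \cite{Li19}). This theorem provides a continuous semi-norm $q_i$ on $V_\pi$ and a Nash function $p_i$ on $H_i \backslash G(\R)$, expressible in terms of a standard weight function on the orbit, such that
\[ \bigl| \ell_i(\pi(g) v) \bigr| \leq q_i(v) \, p_i(x_i g), \qquad v \in V_\pi, \; g \in G(\R). \]
Transporting $p_i$ to $O_i$ via the orbit map and setting $q := \max_i q_i$, $p := \max_i \tilde p_i$ (where $\tilde p_i$ is $p_i$ extended by, say, $0$ off $O_i$ and then dominated by a single Nash function on $X^+(\R)$ using quantifier elimination over $\R$) yields the desired estimate.

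The main obstacle is purely the invocation of the moderate-growth input: once one has the estimate for $H_i \backslash G(\R)$ with uniform dependence on $v$ through a continuous semi-norm and with the majorant being Nash on the orbit, the orbit-wise patching and the passage between $\eta$ and $\eta_0$ are formal. The sphericity assumption in Hypothesis \ref{hyp:PVS} is used exactly at this step; it guarantees that each stabilizer $H_i$ is (real) spherical in $G(\R)$, which is precisely the hypothesis of the cited estimates.
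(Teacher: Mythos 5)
The proposal shares the paper's starting point (invoke a uniform moderate-growth estimate for generalized matrix coefficients on a real spherical homogeneous space, e.g.\ \cite{Li19} or \cite{KKS17}), but it glosses over the actual crux of the proof. In your step where you invoke the moderate-growth theorem, you assert that it directly ``provides a continuous semi-norm $q_i$ on $V_\pi$ and a Nash function $p_i$ on $H_i \backslash G(\R)$.'' That is not what these theorems give: what \cite[Theorem 10.5]{Li19} actually produces is a continuous \emph{subanalytic} weight function $w$ on $X^+(\R)$ (with compact sublevel sets) such that $|\eta_0(v)(x)| \leq w(x)q(v)$. Subanalytic is weaker than Nash, and the heart of the paper's argument is precisely the conversion from a subanalytic majorant to a Nash one. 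Concretely, the paper embeds $X$ into a smooth projective compactification $\overline{X}$, uses \cite[Proposition 7.5]{Li19} to extend $1/w$ continuously to $\overline{X}(\R)$ with zero locus exactly $\partial\overline{X}(\R)$, picks a polynomial $p_0 \geq 0$ with the same zero locus (using that $\overline{X}(\R)$ is affine in the real sense), and then invokes Łojasiewicz's inequality for subanalytic functions on the compact space $\overline{X}(\R)$ to get $p_0 \leq C(1/w)^a$, whence $w \leq (Cp_0^{-1})^{1/a}$ is a Nash bound. Without this step your argument is incomplete: you have simply asserted the conclusion of the hard part.

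A secondary remark: the orbit-by-orbit decomposition is unnecessary here, since \cite[Theorem 10.5]{Li19} is already stated for the whole space $X^+(\R)$; the patching you sketch (extending $p_i$ by zero off $O_i$, then dominating by a global Nash function) is also a bit delicate, since extension by zero is not a Nash function and the quantifier-elimination step would need to be spelled out. Both of these are avoidable by working globally from the start, as the paper does.
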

\begin{proof}
	First, by \cite[Theorem 10.5]{Li19} there exists a ``weight function'' $w: X^+(\R) \to \R_{\geq 1}$ together with a continuous semi-norm $q: V_\pi \to \R_{\geq 0}$ such that
	\begin{compactitem}
		\item $w$ is continuous and subanalytic,
		\item $\{x \in X^+(\R): w(x) \leq B \}$ is compact for all $B > 0$,
		\item $|\eta_0(v)(x)| \leq w(x) q(v)$ for all $v \in V_\pi$ and $x \in X^+(\R)$.
	\end{compactitem}
	In fact, this can be deduced from the moderate growth of SAF representations. It is also deducible from the finer results in \cite{KKS17}.

	Next, we embed $X$ as an open dense subset of a smooth projective $\R$-variety $\overline{X}$ and let $\partial \overline{X} := \overline{X} \smallsetminus X^+$. By \cite[Proposition 7.5]{Li19}, $1/w$ extends uniquely to a subanalytic continuous function $\overline{X}(\R) \to \R_{\geq 0}$, still denoted as $1/w$, whose zero locus is exactly $\partial \overline{X}(\R)$.

	Note that $\overline{X}(\R)$ is affine in the \emph{real} sense; see \cite[Theorem 3.4.4]{BCR98}. Therefore $\partial \overline{X}(\R)$ is also the zero locus of some polynomial function $p_0: \overline{X}(\R) \to \R_{\geq 0}$; see \cite[Proposition 2.1.3]{BCR98}. We claim that there exist constants $a \in \Z_{\geq 1}$ and $C \in \R_{> 0}$ such that
	\[ p_0 \leq C \cdot (1/w)^a \quad \text{over}\; \overline{X}(\R). \]
	Indeed, this is due to the compactness of $\overline{X}(\R)$ and Łojasiewicz's inequality for subanalytic functions; see \cite[Theorem 6.4]{Li19}.

	All in all, we have
	\[ |\eta_0(v)(x)| \leq w(x) q(v) \leq \left( C p_0^{-1} \right)^{1/a} q(v), \quad x \in X^+(\R). \]
	Notice that $C p_0^{-1}$ and its $a$-th root are positive Nash functions on $X^+(\R)$. This completes the proof.
\end{proof}

The notations below are the same as those in Theorem \ref{prop:convergence}.

\begin{proof}[Proof of Theorem \ref{prop:convergence}]
	As in Definition \ref{def:zeta}, we write
	\[ Z_\lambda(\eta, v, \xi) = \int_{X(\R)} \eta_0(v) |f|^{\lambda - \lambda_0} \xi_0 |\Omega| \]
	where $\xi_0 \in \Schw_0(X)$.
	
	By \cite[Lemma 6.6.5]{Li18}, whose proof applies under our Hypothesis \ref{hyp:PVS}, there exist $\mu \in \Lambda_{\R}$ and a Nash function $p_1$ on $X(\R)$ such that $|f|^\mu p \leq p_1$. Hence
	\[ |f|^{\Re(\lambda) - \lambda_0} p \leq |f|^{\Re(\lambda) - \lambda_0 - \mu} p_1. \]
	Therefore, Proposition \ref{prop:soft-bound} implies
	\begin{equation}\label{eqn:conv-estimate}
		\left| \eta_0(v) |f|^{\lambda - \lambda_0} \xi_0 \right| \leq q(v) |f|^{\Re(\lambda) - \lambda_0 - \mu} p_1 |\xi_0|.
	\end{equation}
	
	We claim that, when $\Re(\lambda)$ is constrained in some compact subset $C \subset \Lambda_{\R}$ satisfying $\theta \relgeq{X} \lambda_0 + \mu$ for all $\theta \in C$, the function $|f|^{\Re(\lambda) - \lambda_0 - \mu}$ is uniformly bounded by a polynomial function. To see this, take $\lambda^* = \sum_{i=1}^r 2\lambda^*_i \otimes \omega_i \in 2\Lambda_{\Z}$ such that $\lambda^* \relgeq{X} \theta - \lambda_0 - \mu$ for all $\theta \in C$. Then
	\[ |f|^{\Re(\lambda) - \lambda_0 - \mu} \leq \prod_{i=1}^r \left( 1 + |f_i|^{2\lambda^*_i} \right). \]
	
	On the other hand, $p_1$ is Nash over $X(\R)$, and by \cite[Proposition 2.6.2]{BCR98} every Nash function on $X(\R)$ is bounded by some polynomial. It follows that the right hand side of \eqref{eqn:conv-estimate} is integrable over $X(\R)$ after multiplied by $|\Omega|$, when $\Re(\lambda) \relgg{X} 0$. The implied lower bound $\kappa$ of $\Re(\lambda)$ depends only on $\pi$ and $\eta$. As $\dim_{\CC} \mathcal{N}_\pi(X^+)$ is finite, $\kappa$ can even be made uniform in $\eta$.

	When $(v, \xi)$ is fixed, the holomorphy in $\lambda$ and the boundedness in vertical strips in the range of convergence follow from \eqref{eqn:conv-estimate} and the bound on $|f|^{\Re(\lambda) - \lambda_0 - \mu}$ just obtained.

	In view of the topology on $\Schw(X)$, the continuity of $Z_\lambda(\eta, v, \xi)$ in $\xi$ follows easily from \eqref{eqn:conv-estimate}. The continuity in $v$ follows from that of $q(\cdot)$. Since $V_\pi$ and $\Schw(X)$ are Fréchet, joint continuity in $(v, \xi)$ follows.
\end{proof}

\subsection{Proof of meromorphic continuation}
Consider the sheaves $\mathscr{D}_{X^+}$ on $X^+$ and $\mathscr{D}_{X^+_{\CC}}$ on $X^+_{\CC}$ (recall \S\ref{sec:conventions}). Any distribution $u$ on $X^+(\R)$ generates a $\mathscr{D}_{X^+}$-module $\mathscr{D}_{X^+} \cdot u$, which complexifies into $\mathscr{D}_{X^+_{\CC}} \cdot u$ on $X^+_{\CC}$. We refer to \cite{Li18, Li19} for more a more detailed review of algebraic $\mathscr{D}$-modules, including especially the notion of holonomicity.

Fix a maximal compact subgroup $K \subset G(\R)$.

\begin{proposition}\label{prop:holonomicity}
	Let $\eta \in \mathcal{N}_\pi(X^+)$, written as $\eta = \eta_0 |\Omega|^{1/2}$ for some $\Omega \in \topwedge \check{X} \smallsetminus \{0\}$, and let $v \in V_\pi^{K\text{-fini}}$. Then $\mathscr{D}_{X^+_{\CC}} \cdot \eta_0(v)$ is a holonomic $\mathscr{D}_{X^+_{\CC}}$-module on $X^+_{\CC}$.
\end{proposition}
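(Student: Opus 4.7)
The plan is to derive holonomicity from finiteness properties of the Harish--Chandra module $V_\pi^{K\text{-fini}}$, transported to $\eta_0(v)$ via the equivariance of $\eta$, and then to invoke the general holonomicity result for real spherical varieties.

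First I would record the key finiteness input. Since $\pi$ is SAF (hence admissible) and $v$ is $K$-finite, standard Harish--Chandra theory ensures that $v$ is annihilated by an ideal $I \subset \mathcal{Z}(\mathfrak{g})$ of finite codimension, and that $U(\mathfrak{k}) \cdot v$ is finite-dimensional. Differentiating the right $G(\R)$-action on $X^+(\R)$ yields an algebra homomorphism $U(\mathfrak{g}) \to \mathcal{D}(X^+_{\CC})$, and the $G(\R)$-equivariance of $\eta$ translates the relation $I \cdot v = 0$ into the vanishing $I' \cdot \eta_0(v) = 0$, where $I'$ denotes the image of $I$ inside $\mathcal{D}(X^+_{\CC})^{G_{\CC}}$. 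Since $X^+$ is spherical, Knop's theorem (to be recalled in \S\ref{sec:diff-op}) asserts that $\mathcal{D}(X^+_{\CC})^{G_{\CC}}$ is a polynomial algebra and is finitely generated as a module over the image of $\mathcal{Z}(\mathfrak{g})$; consequently $I'$ has finite codimension in $\mathcal{D}(X^+_{\CC})^{G_{\CC}}$, so $\eta_0(v)$ is $\mathcal{D}(X^+_{\CC})^{G_{\CC}}$-finite.

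The remaining step is to combine this $\mathcal{D}(X^+_{\CC})^{G_{\CC}}$-finiteness with the $K$-finiteness (equivalently, $U(\mathfrak{k})$-finiteness) of the generator to conclude that the characteristic variety of $M := \mathscr{D}_{X^+_{\CC}} \cdot \eta_0(v)$ has dimension equal to $\dim X^+_{\CC}$. The cleanest route is to invoke the main holonomicity result of \cite{Li19} (alternatively, the arguments of \cite{AGM16}), which establishes precisely that any $K$-finite generalized matrix coefficient on a real spherical $G$-variety generates a holonomic $\mathscr{D}$-module. Geometrically, one bounds the characteristic variety by the preimage under the moment map $T^* X^+_{\CC} \to \mathfrak{g}^*_{\CC}$ of a finite union of coadjoint orbit closures cut out by the generalized infinitesimal character, using that sphericity of $X^+_{\CC}$ forces this moment image to be coisotropic with fibres of controlled dimension.

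The principal obstacle is precisely this last characteristic-variety estimate, which genuinely uses sphericity; the preceding finiteness steps are routine consequences of the admissibility of $\pi$ and Knop's theorem. Accordingly, I would present the first two steps explicitly and cite \cite{Li19} for the dimension bound.
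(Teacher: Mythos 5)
Your proposal matches the paper's proof in essence: both ultimately reduce to citing the general holonomicity result of \cite[Proposition 10.4]{Li19} (or the arguments of \cite{AGM16}), with $K$-finiteness, $\mathcal{Z}(\mathfrak{g})$-finiteness of $\eta_0(v)$, and sphericity of $X^+_{\CC}$ as the inputs; your intermediate passage to $\mathcal{D}(X^+_{\CC})^{G_{\CC}}$-finiteness via Knop's theorem is a correct but optional elaboration, since the cited result already takes $\mathcal{Z}(\mathfrak{g})$-finiteness as its hypothesis. One small correction: the holonomicity theorem you invoke requires $X^+_{\CC}$ to be spherical (i.e.\ absolute sphericity), not merely real sphericity as your penultimate sentence states — as the introduction of the paper points out, real sphericity suffices for convergence but not for meromorphy, which is where holonomicity enters.
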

\begin{proof}
	Since $\eta_0(v)$ is $K$-finite and $\mathcal{Z}(\mathfrak{g})$-finite, whilst $X^+_{\CC}$ is a spherical homogeneous space, the holonomicity is assured by \cite[Proposition 10.4]{Li19}. As remarked in \textit{loc.\ cit.}, this can also be proved via the arguments of \cite{AGM16}.
\end{proof}

The notations below are the same as those in Theorem \ref{prop:meromorphy}.

\begin{proof}[Proof of Theorem \ref{prop:meromorphy}]
	The argument for meromorphic continuation is exactly the same as \cite[Theorems 6.8.2, 6.8.4]{Li18}. It proceeds in two stages.

	First, for $v \in V_\pi^{K\text{-fini}}$, one employs the method of Bernstein--Sato $b$-functions as explained in \cite[Appendice]{BD92}. The sole input here is the holonomicity established in Proposition \ref{prop:holonomicity}. This step corresponds to \cite[Theorem 6.8.2]{Li18}; it also produces the holomorphic function $L(\eta, \lambda)$.
	
	Secondly, let $\mathcal{S}(G)$ be the algebra of Schwartz measures on $G(\R)$, which acts on $\Schw(X)$ and also on any SAF representation of $G(\R)$. One uses $V_\pi = \pi(\mathcal{S}(G)) V_\pi^{K\text{-fini}}$ to treat general $v \in V_\pi$. This corresponds to \cite[Theorem 6.8.4]{Li18}, the main analytic device being the Gelfand--Shilov principle of \cite[Proposition 6.8.3]{Li18}. Specifically, the recipe is
	\[ v = \sum_{i=1}^m \pi(\Xi_i) v_i \implies LZ_\lambda \left(\eta, v, \xi  \right) := \sum_{i=1}^m LZ_\lambda\left( \eta, v_i, \check{\Xi}_i \xi \right) \]
	where $v_i \in V_\pi^{K\text{-fini}}$, $\Xi_i \in \mathcal{S}(G)$ and $\check{\Xi}_i(g) = \Xi_i(g^{-1})$. In \textit{loc.\ cit.}, this is shown to be well-defined and compatible with the original definition in the range of convergence.

	The joint continuity and $G(\R)$-invariance of the bilinear form $LZ_\lambda(\eta, \cdot, \cdot)$ are also established in \textit{loc.\ cit.}
\end{proof}

\begin{corollary}\label{prop:zeta-translation}
	Let $\lambda \in \Lambda_{\CC}$, $\xi \in \Schw(X)$.
	\begin{enumerate}[(i)]
		\item Define $\eta \mapsto \eta_\lambda$ as in \eqref{eqn:eta-lambda}, then
		\[ Z_{\mu + \lambda}(\eta, v, \xi) = Z_\mu\left( \eta_\lambda, v, \xi \right) \]
		as meromorphic families in $\mu \in \Lambda_{\CC}$.
		\item Suppose that $\lambda \in \Lambda_{\Z}$ and $h \in \R[X]$ is a relative invariant of eigencharacter $\lambda$ satisfying $h = |f|^\lambda$ on $X(\R)$, then
		\[ Z_{\mu + \lambda}(\eta, v, \xi) = Z_\mu\left( \eta, v, h \xi \right). \]
	\end{enumerate}
\end{corollary}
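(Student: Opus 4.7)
Both identities are essentially formal manipulations under the integral sign together with an appeal to uniqueness of meromorphic continuation. The plan is to verify each in the common range of absolute convergence supplied by Theorem \ref{prop:convergence}, then invoke Theorem \ref{prop:meromorphy} to propagate the identity to all $\mu \in \Lambda_{\CC}$.

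\textbf{Step 1: Identity (i) in the convergent range.} Recall from \eqref{eqn:eta-lambda} that $\eta_\lambda(v) = \eta(v) |f|^\lambda$ as an element of $C^\infty(X^+)$, and that $\eta_\lambda \in \mathcal{N}_{\pi \otimes |\omega|^\lambda}(X^+)$. For $\Re(\mu) \relgg{X} 0$ (more precisely, once $\Re(\mu) + \Re(\lambda)$ lies in the half-space of convergence for $Z_{\bullet}(\eta, v, \xi)$ and $\Re(\mu)$ in that for $Z_{\bullet}(\eta_\lambda, v, \xi)$), the defining integrals of both sides make sense absolutely, and the identity
\[
Z_\mu(\eta_\lambda, v, \xi) = \int_{X^+(\R)} \eta(v) |f|^\lambda |f|^\mu \xi = \int_{X^+(\R)} \eta(v) |f|^{\mu + \lambda} \xi = Z_{\mu + \lambda}(\eta, v, \xi)
\]
is immediate. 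Both sides are holomorphic in $\mu$ in this open subset by Theorem \ref{prop:convergence}.

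\textbf{Step 2: Identity (ii) in the convergent range.} Here one uses that $h\xi \in \Schw(X)$ for $h \in \R[X]$ and $\xi \in \Schw(X)$, so $Z_\mu(\eta, v, h\xi)$ makes sense. For $\Re(\mu) \relgg{X} 0$, the hypothesis $h = |f|^\lambda$ on $X(\R)$ gives
\[
Z_\mu(\eta, v, h\xi) = \int_{X^+(\R)} \eta(v) |f|^\mu \, h \, \xi = \int_{X^+(\R)} \eta(v) |f|^{\mu + \lambda} \xi = Z_{\mu + \lambda}(\eta, v, \xi).
\]
Note $\lambda \in \Lambda_{\Z}$, so $|f|^\lambda$ is a genuine polynomial in absolute values of the $f_i$ and the shift $\mu \mapsto \mu + \lambda$ stays inside $\Lambda_{\CC}$.

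\textbf{Step 3: Meromorphic continuation.} By Theorem \ref{prop:meromorphy}, each of the four expressions
\[
Z_{\mu+\lambda}(\eta, v, \xi), \quad Z_\mu(\eta_\lambda, v, \xi), \quad Z_\mu(\eta, v, h\xi)
\]
extends to a meromorphic family in $\mu \in \Lambda_{\CC}$ (for (i) applied to the triplet $(G, \rho, X)$ and the representations $\pi$, $\pi \otimes |\omega|^\lambda$; for (ii) with the same $\pi$ but with test section $h\xi$). Since two meromorphic functions on the connected complex manifold $\Lambda_{\CC}$ that agree on a non-empty open subset agree everywhere, the equalities established in Steps 1 and 2 persist as equalities of meromorphic families, proving (i) and (ii) respectively.

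\textbf{Expected obstacles.} There is essentially nothing hard: the only points to handle with care are the bookkeeping in (i), namely that $\eta_\lambda$ is interpreted as living in $\mathcal{N}_{\pi \otimes |\omega|^\lambda}(X^+)$ so that $Z_\mu(\eta_\lambda, \cdot, \cdot)$ is the zeta integral attached to the twisted representation, and confirming that one may indeed find a non-empty common convergence region for both sides (which follows from the uniform bound $\kappa(\pi)$ in Theorem \ref{prop:convergence} applied to $\pi$ and $\pi \otimes |\omega|^\lambda$, by simply taking $\Re(\mu)$ large enough depending on $\lambda$). No genuine analytic subtlety arises.
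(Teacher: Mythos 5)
Your proof is correct and follows the same strategy as the paper's: verify each identity directly from the definition of the zeta integral in a nonempty common convergence region, then propagate to all $\mu \in \Lambda_{\CC}$ by meromorphic continuation. The paper states this in one sentence; your version fills in the routine details, including the observation that the integrands of $Z_\mu(\eta_\lambda,v,\xi)$ and $Z_{\mu+\lambda}(\eta,v,\xi)$ literally coincide, so the convergence regions do too.
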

\begin{proof}
	For both (i) and (ii), we begin with the case that $\mu$ and $\lambda + \mu$ are both in the range of convergence for zeta integrals. Then the equalities hold by Definition \ref{def:zeta}. The general case follows by meromorphic continuation.
\end{proof}

\subsection{Order of tempered distributions}
The results below will be applied to prove the functional equation.

Choose a basis for the $\R$-vector space $X$. This gives rise to the dual basis of $\check{X}$, the standard volume form $\Omega$ and the standard norm $\|\cdot\|$ on $X(\R) \simeq \R^n$. The elements of $\Schw(X)^\vee \simeq \Schw_0(X)^\vee$ can be viewed as \emph{tempered distributions} on $X(\R)$.

Recall that the topology on $\Schw_0(X)$ is determined by the semi-norms
\[ \|\xi_0\|_{a,b} := \sup_{\substack{|\underline{\alpha}| \leq a \\ |\underline{\beta}| \leq b}} \; \sup_{x \in X(\R)} \left| x^{\underline{\beta}} \cdot \partial^{\underline{\alpha}} \xi(x) \right| , \quad a,b \in \Z_{\geq 0} \]
where $\underline{\alpha} = (\alpha_1, \ldots, \alpha_n)$, $\underline{\beta} = (\beta_1, \ldots, \beta_n)$, $|\alpha| := \sum_i \alpha_i$, and $\partial^{\underline{\alpha}} = \partial_1^{\alpha_1} \cdots \partial_n^{a_n}$, $x^{\underline{\beta}} = x_1^{b_1} \cdots x_n^{b_n}$ are the standard terminologies of multi-indices.

\begin{definition}
	Consider $(a, b) \in \Z_{\geq 0}^2$. We say that a tempered distribution $Z$ on $X$ \emph{has order $\leq (a,b)$} if there exists a constant $C > 0$ such that $|Z(\xi_0)| \leq C \|\xi_0\|_{(a,b)}$ for all $\xi_0 \in \Schw_0(X)$.
\end{definition}

Some basic facts:
\begin{itemize}
	\item Every tempered distribution has order $\leq (a, b)$ for sufficiently large $a, b$.
	\item If $a' \geq a$ and $b' \geq b$ then having order $\leq (a, b)$ implies having order $\leq (a', b')$.
\end{itemize}

The following is also well-known.

\begin{proposition}[See for example {\cite[Theorem 25.1]{Tr67}} and its proof]\label{prop:order-Fourier}
	Suppose that $\check{Z} \in \Schw(\check{X})^\vee$ has order $\leq (a, b)$. Then $\check{Z} \circ \mathcal{F} \in \Schw(X)^\vee$ has order $\leq (b, a+n+1)$.
\end{proposition}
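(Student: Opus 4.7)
The plan is to bound $\|\mathcal{F}\xi_0\|_{(a,b)}$ by a constant times $\|\xi_0\|_{(b,a+n+1)}$; the proposition then follows instantly from the hypothesis $|\check{Z}(\eta_0)|\leq C \|\eta_0\|_{(a,b)}$ applied to $\eta_0 = \mathcal{F}\xi_0$. After fixing a basis and using the explicit formula \eqref{eqn:Fourier-explicit}, this is a completely standard exercise in Fourier analysis on $\R^n$, so the outline below is essentially bookkeeping.

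First I would use Lemma \ref{prop:F-commute} (together with its obvious analogue for differentiation, deduced in the same way by choosing $|\Omega|$ and differentiating under the integral sign) to interchange the roles of multiplication and differentiation under $\mathcal{F}$. Concretely, for any multi-indices $\underline{\alpha},\underline{\beta}\in\Z_{\geq 0}^n$ there is a constant $C_{\underline{\alpha},\underline{\beta}}(\psi)\in\CC^\times$ such that
\[
\check{x}^{\underline{\beta}}\,\partial^{\underline{\alpha}}\,\mathcal{F}_{\psi,|\Omega|}\xi_0
\;=\; C_{\underline{\alpha},\underline{\beta}}(\psi)\cdot
\mathcal{F}_{\psi,|\Omega|}\!\bigl(\partial^{\underline{\beta}}(x^{\underline{\alpha}}\xi_0)\bigr).
\]
Applying the trivial sup-$L^1$ bound $\|\mathcal{F}_{\psi,|\Omega|}f\|_\infty\leq \|f\|_{L^1(X,|\Omega|)}$, this reduces matters to estimating $\|\partial^{\underline{\beta}}(x^{\underline{\alpha}}\xi_0)\|_{L^1}$ for $|\underline{\alpha}|\leq a$, $|\underline{\beta}|\leq b$.

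Second, I would expand $\partial^{\underline{\beta}}(x^{\underline{\alpha}}\xi_0)$ by the Leibniz rule into a finite sum of terms $c_{\underline{\gamma},\underline{\delta}}\,x^{\underline{\gamma}}\partial^{\underline{\delta}}\xi_0$ with $|\underline{\gamma}|\leq a$ and $|\underline{\delta}|\leq b$, then bound each $L^1$ norm by a weighted sup norm via
\[
\|g\|_{L^1}\;\leq\;\Bigl(\int_{\R^n}(1+\|x\|)^{-n-1}\,dx\Bigr)\cdot \sup_{x}(1+\|x\|)^{n+1}|g(x)|,
\]
whose prefactor is finite since $n+1>n$. Since $(1+\|x\|)^{n+1}|x^{\underline{\gamma}}|$ is dominated by a fixed linear combination of monomials $|x^{\underline{\gamma}'}|$ with $|\underline{\gamma}'|\leq a+n+1$, combining everything gives
\[
\sup_{\check{x}}\bigl|\check{x}^{\underline{\beta}}\partial^{\underline{\alpha}}\mathcal{F}\xi_0(\check{x})\bigr|
\;\leq\; C\cdot\|\xi_0\|_{(b,\,a+n+1)},
\]
uniformly in $|\underline{\alpha}|\leq a$, $|\underline{\beta}|\leq b$, which is the desired estimate.

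There is essentially no serious obstacle here; the only care required is in matching conventions so that the order pair $(a,b)$ (meaning $a$ derivatives and $b$ polynomial factors) is correctly swapped to $(b,a+n+1)$ after Fourier inversion: the $a$ derivatives on $\mathcal{F}\xi_0$ become $a$ polynomial factors, to which the weight $(1+\|x\|)^{n+1}$ needed for $L^1$-integrability adds another $n+1$, while the $b$ coordinate factors turn into $b$ derivatives. Since the relevant statement already appears as \cite[Theorem 25.1]{Tr67}, one may alternatively invoke it directly after choosing $|\Omega|$ to trivialize the half-density language.
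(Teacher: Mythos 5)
Your proof is correct and is essentially the argument the paper points to by citing \cite[Theorem 25.1]{Tr67}; the paper gives no proof of its own. The bookkeeping is right: the identity $\check{x}^{\underline{\beta}}\partial^{\underline{\alpha}}\mathcal{F}_{\psi,|\Omega|}\xi_0 = C_{\underline{\alpha},\underline{\beta}}(\psi)\,\mathcal{F}_{\psi,|\Omega|}(\partial^{\underline{\beta}}(x^{\underline{\alpha}}\xi_0))$ combined with the sup--$L^1$ bound, the Leibniz expansion, and the $(1+\|x\|)^{-n-1}$ integrability weight indeed turns $a$ derivatives into at most $a+n+1$ polynomial factors and $b$ polynomial factors into $b$ derivatives, giving $\|\mathcal{F}\xi_0\|_{(a,b)} \leq C\|\xi_0\|_{(b,\,a+n+1)}$ and hence the claim.
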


Next, consider the tempered distributions $Z_\lambda(\eta, v, \cdot)$ for $\lambda$ in the range of convergence.

\begin{proposition}
	Let $\eta \in \mathcal{N}_\pi(X^+)$ and $v \in V_\pi$. For $\kappa$ as in Theorem \ref{prop:convergence}, there exists $b \in \Z_{\geq 0}$ such that $Z_\lambda(\eta, v, \cdot)$ has order $\leq (0, b)$ for all $v \in V_\pi$ and all $\lambda \in \Lambda_{\CC}$ with $\Re(\lambda) \relgeq{X} \kappa$.
\end{proposition}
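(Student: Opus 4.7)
The plan is to pass from the pointwise bound in the proof of Theorem~\ref{prop:convergence} to a Schwartz seminorm estimate, using Corollary~\ref{prop:zeta-translation}(ii) to confine the $\Re\lambda$-dependence to a compact range where the seminorm index $b$ can be chosen once and for all.

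Writing $\xi = \xi_0|\Omega|^{1/2}$, the proof of Theorem~\ref{prop:convergence} produces a continuous seminorm $q$ on $V_\pi$, a polynomial $p_1$ on $X(\R)$, and $\mu, \lambda_0 \in \Lambda_\R$ such that, for every $\Re\lambda \relgeq{X} \kappa$,
\[
|Z_\lambda(\eta, v, \xi)| \leq q(v) \int_{X(\R)} p_1(x)\, |f(x)|^{\Re\lambda - \lambda_0 - \mu}\, |\xi_0(x)|\, |\Omega|(x).
\]
For each such $\lambda$, set $n_i := \lfloor(\Re\lambda_i - \kappa_i)/2\rfloor \in \Z_{\geq 0}$, $\mu_\lambda := \lambda - 2\sum_i n_i\omega_i$, and $h := \prod_i f_i^{2n_i} \in \R[X]$; then $h \geq 0$, $h = |f|^{2\sum_i n_i \omega_i}$ on $X(\R)$, and $\Re\mu_\lambda$ lies in the compact box $B := \prod_i[\kappa_i, \kappa_i + 2)$. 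Corollary~\ref{prop:zeta-translation}(ii) yields the identity
\[
Z_\lambda(\eta, v, \xi) = Z_{\mu_\lambda}(\eta, v, h\xi),
\]
which isolates all the $\Re\lambda$-growth into the polynomial factor $h$ while keeping the exponent parameter $\mu_\lambda$ in a bounded set.

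Over the compact box $B$, the function $p_1 \cdot |f|^{\Re\mu - \lambda_0 - \mu}$ is pointwise dominated by $(1 + \|x\|)^{D_B}$ with $D_B$ depending only on $B$. Taking $b := D_B + n + 1$ (a choice depending only on $\pi$ and $(G, \rho, X)$) and combining the displayed estimate applied to $(\mu_\lambda, h\xi)$ with the elementary Schwartz inequality $|\xi_0(x)|(1 + \|x\|)^b \leq C_{b,n} \|\xi_0\|_{0, b}$, one arrives at a bound
\[
|Z_\lambda(\eta, v, \xi)| \leq C(\lambda, v)\, \|\xi_0\|_{0, b},
\]
with $b$ independent of both $\lambda$ and $v$. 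The hardest step will be the verification that the constant $C(\lambda, v)$ is finite for each individual $\lambda$: the naive estimate brings in a factor $|h(x)|$ whose degree grows linearly in $\Re\lambda$, so one must reassemble $|h(x)|$ with the bounded factor $|f(x)|^{\Re\mu_\lambda - \lambda_0 - \mu}$ back into $|f(x)|^{\Re\lambda - \lambda_0 - \mu}$ inside the integral and invoke the Łojasiewicz-type bound from the proof of Theorem~\ref{prop:convergence} to secure convergence (near $\partial X$ from $\Re\lambda \geq \kappa$, at infinity from the Schwartz weight $(1+\|x\|)^{-b}$). The essential content captured by the proposition is that the \emph{derivative order} of the estimate is uniformly zero in $(v, \lambda)$ and the polynomial index $b$ is fixed by the geometry of $(G, \rho, X)$ alone, while the $\lambda$-growth is transferred entirely to the constant.
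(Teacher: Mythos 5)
Your reduction via Corollary~\ref{prop:zeta-translation}(ii) does not buy what you need, and the step you yourself flag as ``the hardest'' is a genuine gap rather than a technicality. After writing $Z_\lambda(\eta,v,\xi)=Z_{\mu_\lambda}(\eta,v,h\xi)$, any estimate against the seminorm $\|\cdot\|_{0,b}$ has to be applied to $h\xi_0$, not to $\xi_0$; since $\|h\xi_0\|_{0,b}$ is controlled only by $\|\xi_0\|_{0,\,b+\deg h}$ and $\deg h$ grows linearly in $\Re(\lambda)$, the twist merely relocates the growth. Your proposed repair --- reassembling $|h|\,|f|^{\Re\mu_\lambda-\lambda_0-\mu}=|f|^{\Re\lambda-\lambda_0-\mu}$ and letting the weight $(1+\|x\|)^{-b}$ handle infinity --- fails: after reassembly the relevant integral is $\int_{X(\R)}p_1\,|f|^{\Re\lambda-\lambda_0-\mu}(1+\|x\|)^{-b}\,|\Omega|$, and for a \emph{fixed} $b$ this diverges at infinity once $\Re(\lambda)$ is large, because $p_1|f|^{\Re\lambda-\lambda_0-\mu}$ has polynomial growth of degree roughly $\deg p_1+\sum_i \Re(\lambda_i)\deg f_i$. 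So the constant $C(\lambda,v)$ in your final bound is not finite for all $\lambda$ in the cone. In fact no argument can produce a single $b$ valid on all of $\{\Re(\lambda)\relgeq{X}\kappa\}$: already in the rank-one example where $G=\Gm$ acts by scaling on a one-dimensional $X$ and $\pi$ is trivial, the distribution $\xi_0\mapsto\int_{\R}|x|^{s}\xi_0(x)\,\dd x$ does not have order $\leq(0,b)$ once $s>b$ (translate a fixed bump to $x\approx R$ and let $R\to\infty$); the second index must grow roughly like $\deg f\cdot\Re\lambda$, so the $\lambda$-growth cannot be pushed entirely into the constant, contrary to your closing sentence.

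For comparison, the paper's own proof is the direct domination: keep the estimate \eqref{eqn:conv-estimate} and choose an even $b$ with $|f(x)|^{\Re\lambda-\lambda_0-\mu}p_1(x)\leq C(1+\|x\|^2)^{b/2}$ on $X(\R)$. Such a $b$ exists only when $\Re(\lambda)$ is confined to a bounded part of the convergence region, with $b$ depending on the upper bound --- which is the reading compatible with how the result is used later (in Lemma~\ref{prop:Delta-h} the parameter $\lambda$ stays in a relatively compact set $U$). Under that reading your own first display, combined with the compact-box domination you already set up, finishes the proof directly, and the detour through Corollary~\ref{prop:zeta-translation}(ii) is unnecessary; the genuinely important point, which you do capture, is that the derivative index can be taken to be $0$ uniformly in $(v,\lambda)$.
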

\begin{proof}
	This stems from the estimate \eqref{eqn:conv-estimate} in the proof of Theorem \ref{prop:convergence} and the subsequent discussions: with the notations therein, it suffices to take an even $b$ such that
	\[ |f(x)|^{\Re(\lambda) -\lambda_0 - \mu} p_1(x) \leq C (1 + \|x\|^2)^{b/2} \]
	for all $x \in X(\R)$, where $C$ is some constant.
\end{proof}

For the next proposition, take a holomorphic function $L(\eta, \lambda)$ and define $LZ_\lambda(\eta, \cdot, \cdot)$ as in Theorem \ref{prop:meromorphy}. We shall also fix a maximal compact subgroup $K \subset G(\R)$.

\begin{proposition}\label{prop:order-estimate}
	Let $\eta \in \mathcal{N}_\pi(X^+)$ and $v \in V_\pi^{K\text{-fini}}$. For every $c \in \Lambda_{\R}$, there exists $(a, b) \in \Z_{\geq 0}^2$ such that $LZ_\lambda(\eta, v, \cdot)$ has order $\leq (a, b)$ for all $\lambda \in \Lambda_{\CC}$ with $\Re(\lambda) \relgeq{X} c$.
\end{proposition}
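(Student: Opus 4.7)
The plan is to leverage the Bernstein--Sato style functional equation underlying the proof of Theorem \ref{prop:meromorphy}. For $v \in V_\pi^{K\text{-fini}}$, Proposition \ref{prop:holonomicity} together with the argument of \cite[Appendice]{BD92} furnishes a nonzero polynomial $b(\lambda)$ and an algebraic differential operator $P(\lambda) \in \mathcal{D}(X)[\lambda]$ with polynomial coefficients in both $x$ and $\lambda$, such that integration by parts on $X^+(\R)$ yields
\[ b(\lambda) \, Z_\lambda(\eta, v, \xi) = Z_{\lambda + a}\bigl(\eta, v, P(\lambda)^* \xi\bigr) \qquad \text{for } \Re(\lambda) \relgg{X} 0, \]
for some $a \in \Lambda_{\Z}$ with $a \relg{X} 0$. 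Iterating $m$ times gives
\[ b_m(\lambda) \, Z_\lambda(\eta, v, \xi) = Z_{\lambda + ma}\bigl(\eta, v, P_m(\lambda)^* \xi\bigr), \]
where $b_m(\lambda) = \prod_{j=0}^{m-1} b(\lambda + ja)$ and $P_m(\lambda) = P(\lambda + (m-1)a) \cdots P(\lambda)$ is a differential operator whose order $A_m$ and the maximum $x$-degree $B_m$ of whose coefficients depend only on $m$ (while the coefficients themselves are polynomials in $\lambda$).

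Now $L(\eta, \lambda) = \prod_i \Gamma(\alpha_i(\lambda))^{-1}$ is chosen precisely so as to cancel the poles of $Z_\lambda$ produced by the iteration; since $\Gamma(s+n)/\Gamma(s)$ is polynomial in $s$ for $n \in \Z_{\geq 0}$, the quantity $R(\lambda) := L(\eta, \lambda) \cdot L(\eta, \lambda + ma)^{-1} \cdot b_m(\lambda)^{-1}$ is a polynomial in $\lambda$. Multiplying the iterated identity by $L(\eta, \lambda)$ therefore yields
\[ LZ_\lambda(\eta, v, \xi) = R(\lambda) \cdot LZ_{\lambda + ma}\bigl(\eta, v, P_m(\lambda)^* \xi\bigr), \]
valid first for $\Re(\lambda) \relgg{X} 0$ and then for all $\lambda \in \Lambda_{\CC}$ by holomorphic continuation (Theorem \ref{prop:meromorphy}).

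Given $c \in \Lambda_{\R}$, I choose $m \in \Z_{\geq 1}$ so large that $c + ma \relgg{X} \kappa$, where $\kappa$ is the constant from Theorem \ref{prop:convergence}; such an $m$ exists because $a \relg{X} 0$. For every $\lambda$ with $\Re(\lambda) \relgeq{X} c$, the shifted parameter $\lambda + ma$ then lies in the range of convergence, and the previous proposition supplies $b_0 \in \Z_{\geq 0}$ independent of $\lambda$ and a $\lambda$-dependent constant $C(\lambda)$ with
\[ \bigl|Z_{\lambda + ma}(\eta, v, \zeta)\bigr| \leq C(\lambda) \, \|\zeta_0\|_{0, b_0}, \qquad \zeta = \zeta_0 |\Omega|^{1/2} \in \Schw(X). \]
Writing $\xi = \xi_0 |\Omega|^{1/2}$, the Leibniz rule together with the uniform bounds on the order and on the $x$-degrees of the coefficients of $P_m(\lambda)^*$ produces a polynomial $Q(\lambda)$ such that
\[ \bigl\|P_m(\lambda)^* \xi_0\bigr\|_{0, b_0} \leq Q(\lambda) \, \|\xi_0\|_{A_m, \, b_0 + B_m}, \qquad \xi_0 \in \Schw_0(X). \]
Combining these estimates with the factor $|R(\lambda)|$ and the fact that $LZ_{\lambda+ma}$ inherits the order bound of $Z_{\lambda+ma}$ (up to the entire function $L(\eta,\lambda+ma)$) yields a bound of the form $|LZ_\lambda(\eta, v, \xi)| \leq C'(\lambda) \|\xi_0\|_{A_m, b_0 + B_m}$, proving that $LZ_\lambda(\eta, v, \cdot)$ has order $\leq (a, b) := (A_m, b_0 + B_m)$ for all $\Re(\lambda) \relgeq{X} c$, with $(a, b)$ independent of $\lambda$.

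The main obstacle is arranging the shifting identity so that the prefactor $R(\lambda)$ is a \emph{polynomial} rather than merely meromorphic, and so that the differential operator $P_m(\lambda)^*$ has uniformly bounded order and polynomial $x$-degree; the $\Gamma$-function form of $L(\eta,\lambda)$ provided by Theorem \ref{prop:meromorphy} is exactly what makes this work, after which the order estimate is a routine application of the Leibniz rule and the previous proposition.
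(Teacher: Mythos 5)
Your proposal is correct and follows essentially the same strategy as the paper's proof: iterate the Bernstein--Sato functional equation furnished by the holonomicity (Proposition \ref{prop:holonomicity}, via \cite{BD92}) to shift $\lambda$ rightward into the range of convergence, use the order bound from the preceding proposition there, and observe that the shifting operator increases the order by an amount depending only on the number of shifts. You spell out the bookkeeping more explicitly than the paper does, in particular the cancellation $R(\lambda) = L(\eta,\lambda)\,L(\eta,\lambda+ma)^{-1}\,b_m(\lambda)^{-1}$ being pole-free on the region of interest, which the paper leaves implicit in its citation of \cite{Li18, BD92}; this is exactly the point where the $\Gamma$-function form of $L(\eta,\lambda)$ is used.
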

\begin{proof}
	Recall from \cite[Proposition 6.4.3]{Li18} or \cite[Proposition A.1]{BD92} that the meromorphic continuation of $Z_\lambda(\eta, v, \cdot)$ is achieved by applying certain algebraic differential operators to $\eta(v)|f|^\lambda$, with the effect of shifting the domain of $Z_\lambda(\eta, v, \cdot)$ leftwards, and possibly creates poles. Starting from $\{\lambda: \Re(\lambda) \relgeq{X} \kappa \}$ on which $Z_\lambda(\eta, v, \cdot)$ has order $\leq (0, b')$ for some $b' \geq 0$, one covers $\{\lambda: \Re(\lambda) \relgeq{X} c \}$ after a finite number of such shifts. This procedure increases the order by some pair of positive integers. Our assertion follows.
\end{proof}

\section{Invariant differential operators}\label{sec:diff-op}

\subsection{On certain Capelli operators}

In this subsection, we take $G$ to be a connected reductive $\CC$-group, and the algebraic varieties are taken over $\CC$. For any smooth $G$-variety $Z$, there is a natural homomorphism $\mathcal{U}(\mathfrak{g}) \to \mathcal{D}(Z)$ which restricts to $\mathcal{Z}(\mathfrak{g}) \to \mathcal{D}(Z)^G$.

Now consider a finite-dimensional $\CC$-vector space $X$ with a right $G$-action, given by a homomorphism $\rho: G \to \GL(X)$ between algebraic groups.

\begin{definition}\label{def:MF-space}
	For $X$ as above, we say $X$ is \emph{multiplicity-free} if $X$ is spherical as a $G$-variety. Equivalently, the left $G$-module $\CC[X]$ decomposes with multiplicity one.
\end{definition}

Being multiplicity-free implies the existence of an open dense $G$-orbit $X^+ \subset X$, hence $(G, \rho, X)$ is a prehomogeneous vector space. In contrast with Hypothesis \ref{hyp:PVS}, here $X^+$ is not necessarily affine and $(G, \rho, X)$ is not necessarily regular.

\begin{lemma}\label{prop:auto-regularity}
	For a multiplicity-free $G$-space $X$, we have $\mathcal{D}(X^+)^G = \mathcal{D}(X)^G$. Moreover $\mathcal{D}(X)^G$ is commutative and finitely generated as a $\mathcal{Z}(\mathfrak{g})$-module.
\end{lemma}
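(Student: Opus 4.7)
The plan is to derive all three assertions from Knop's theory of invariant differential operators on spherical / multiplicity-free varieties \cite{Kn94, Kn98}. I would tackle them in the order: commutativity, then equality of the two invariant rings, and finally finite generation over $\mathcal{Z}(\mathfrak{g})$.

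\textbf{Commutativity.} I would exploit the multiplicity-free decomposition $\CC[X] = \bigoplus_\sigma \CC[X]_\sigma$ into $G$-isotypic components. Any $D \in \mathcal{D}(X)^G$ preserves each $\CC[X]_\sigma$ and, being $G$-equivariant on a multiplicity-one space, must act on it by a scalar $\chi_\sigma(D)$ by Schur's lemma. Since $X$ is smooth and affine, $\CC[X]$ is a faithful $\mathcal{D}(X)$-module, so $D \mapsto (\chi_\sigma(D))_\sigma$ embeds $\mathcal{D}(X)^G$ into the commutative algebra $\prod_\sigma \CC$. This is essentially the Howe--Umeda argument.

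\textbf{Equality $\mathcal{D}(X^+)^G = \mathcal{D}(X)^G$.} Restriction to the open subset gives an injection. For surjectivity, I would apply Knop's Harish-Chandra homomorphism to both $X$ and $X^+$, both of which are affine and spherical (for $X^+$ by Hypothesis \ref{hyp:PVS}; for $X$ because it is affine and contains the open spherical $G$-orbit $X^+$). Knop's construction attaches to any affine spherical $G$-variety $Y$ an injective algebra homomorphism $\mathcal{D}(Y)^G \hookrightarrow S(\mathfrak{a}_Y)^{W_Y}$, where $\mathfrak{a}_Y$ and the little Weyl group $W_Y$ are defined from the generic cotangent geometry and therefore depend only on the open $G$-orbit. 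Hence $(\mathfrak{a}_X, W_X) = (\mathfrak{a}_{X^+}, W_{X^+})$, and the restriction map intertwines the two Harish-Chandra embeddings into the common target $S(\mathfrak{a})^W$; moreover the image of Knop's homomorphism is characterised intrinsically (again via generic data), so it is the same for $X$ and $X^+$. This forces the restriction to be surjective. The main obstacle I foresee is verifying the naturality of Knop's construction under the open embedding $X^+ \hookrightarrow X$, i.e.\ confirming that the little Weyl group and the image are genuinely computed from the generic points; this is implicit in the cotangent-bundle definition of $\mathfrak{a}_Y$, but needs to be unwound carefully.

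\textbf{Finite generation over $\mathcal{Z}(\mathfrak{g})$.} Continuing with the Harish-Chandra picture, the composition $\mathcal{Z}(\mathfrak{g}) \to \mathcal{D}(X)^G \hookrightarrow S(\mathfrak{a})^W$ factors, via the classical Harish-Chandra isomorphism $\mathcal{Z}(\mathfrak{g}) \cong S(\mathfrak{t})^{W_G}$, through a restriction morphism $S(\mathfrak{t})^{W_G} \to S(\mathfrak{a})^W$ coming from a linear inclusion $\mathfrak{a} \hookrightarrow \mathfrak{t}$. By Chevalley's theorem $S(\mathfrak{a})$ is finite over $S(\mathfrak{a})^W$, and the restriction $S(\mathfrak{t})^{W_G} \to S(\mathfrak{a})^W$ is a finite morphism of finitely generated polynomial algebras. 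Since $\mathcal{D}(X)^G$ embeds into the noetherian ring $S(\mathfrak{a})^W$, which is finite over the image of $\mathcal{Z}(\mathfrak{g})$, it is itself finitely generated as a $\mathcal{Z}(\mathfrak{g})$-module. (This last step is really a corollary of the preceding two: once the equality $\mathcal{D}(X^+)^G = \mathcal{D}(X)^G$ is in hand, Knop's theorem from \cite{Kn94} can be cited directly in its affine spherical form.)
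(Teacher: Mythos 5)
Your plan is workable but more roundabout than the paper's, and the ``main obstacle'' you flag is in fact the crux. The paper bypasses the Harish--Chandra picture entirely for the equality $\mathcal{D}(X^+)^G = \mathcal{D}(X)^G$: it cites Knop's result that the algebra $\mathfrak{U}(Z)$ of completely regular differential operators is a \emph{birational-equivariant} invariant of the smooth $G$-variety $Z$ (so $\mathfrak{U}(X) = \mathfrak{U}(X^+)$ because $X^+ \hookrightarrow X$ is an open dense equivariant embedding), hence so is $\mathfrak{Z}(Z) := \mathfrak{U}(Z)^G$, and that $\mathfrak{Z}(Z) = \mathcal{D}(Z)^G$ whenever $Z$ is spherical. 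Applying this to $Z \in \{X, X^+\}$ gives the equality at once, and both commutativity and finite generation over $\mathcal{Z}(\mathfrak{g})$ are part of the main theorem of \cite{Kn94}, so there is no need to re-derive them via a Howe--Umeda argument plus the finiteness of $S(\mathfrak{t})^{W_G} \to S(\mathfrak{a})^W$ (although your sketches of both steps are correct). Two corrections to your outline are in order. First, $X^+$ is \emph{not} affine in this Lemma: you invoke Hypothesis~\ref{hyp:PVS}, but the statement concerns arbitrary multiplicity-free $G$-spaces, and the paper explicitly warns right after Definition~\ref{def:MF-space} that in this setting $X^+$ need not be affine (Knop's $\mathfrak{Z}(Z)$ is defined for any smooth $G$-variety, so the argument can be repaired, but not as you wrote it). Second, your appeal to ``the image of Knop's homomorphism is characterised intrinsically via generic data'' is exactly the birational invariance of $\mathfrak{U}(Z)$; rather than trying to unwind the cotangent-geometry definition yourself, it is cleanest to quote this from \cite[p.262]{Kn94}, which is precisely what the paper does.
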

\begin{proof}
	We recall from \cite[p.271]{Kn94} that Knop defined an algebra $\mathfrak{Z}(Z) := \mathfrak{U}(Z)^G$ for any smooth $G$-variety $Z$, where $\mathfrak{U}(Z) \subset \mathcal{D}(Z)$ is the subalgebra of ``completely regular differential operators''. A key fact in \cite[p.262]{Kn94} is that $\mathfrak{U}(Z)$ is a birational-equivariant invariant of $Z$, hence so is $\mathfrak{Z}(Z)$; furthermore, $\mathfrak{Z}(Z) = \mathcal{D}(Z)^G$ when $Z$ is spherical, by \cite[pp.254--255]{Kn94}.
	
	Applying these results to $Z \in \left\{ X, X^+ \right\}$, we conclude that $\mathcal{D}(X^+)^G = \mathfrak{Z}(X^+) = \mathfrak{Z}(X) = \mathcal{D}(X^G)$. The commutativity of $\mathcal{D}(X)^G$ and finite generation over $\mathcal{Z}(\mathfrak{g})$ are included in the main Theorem in \textit{loc.\ cit.}
\end{proof}

Fix a Borel subgroup $B \subset G$ and set $T := B/B_{\mathrm{der}}$. Let $X$ be a multiplicity-free $G$-space. There are decompositions
\begin{equation}\label{eqn:P-D} \begin{aligned}
	\CC[X] & = \bigoplus_{\lambda \in \mathbf{X}^*(T)^+} \mathcal{P}_\lambda, \\
	\CC[\check{X}] & = \bigoplus_{\lambda \in \mathbf{X}^*(T)^+} \mathcal{D}_\lambda,
\end{aligned}\end{equation}
where
\begin{compactitem}
	\item $\mathbf{X}^*(T)^+$ denotes the set of dominant weights in $\mathbf{X}^*(T)$,
	\item $\mathcal{P}_\lambda$ is the simple $G$-submodule with lowest weight $-\lambda$, occurring with multiplicity $\leq 1$,
\end{compactitem}
and the decomposition of $\CC[\check{X}]$ is obtained from that of $\CC[X]$ by duality; in particular $\mathcal{D}_\lambda$ is the simple $G$-submodule with highest weight $\lambda$, with multiplicity $\leq 1$. Thus $\check{X}$ is also a multiplicity-free $G$-space.

Note that in \cite{Kn98}, $G$ acts on the left of $X$. One switches between left and right actions on $X$ by $g^{-1} x = xg$, and the left $G$-module $\CC[X]$ remains unaffected. Ditto for $\CC[\check{X}]$.

\begin{theorem}[see {\cite{HU91}} or {\cite{Kn98}}]\label{prop:Capelli}
	For a multiplicity-free $G$-space $X$, we have an isomorphism of $\CC$-algebras
	\[ \begin{tikzcd}[row sep=tiny]
		C: \CC[X \times \check{X}]^G \arrow[equal, r] & \displaystyle\bigoplus_{\lambda \in \mathbf{X}^*(T)^+} \left( \mathcal{P}_\lambda \dotimes{\CC} \mathcal{D}_\lambda \right)^G \arrow[r, "\sim"] & \mathcal{D}(X)^G \\
		& \sum_\lambda p_\lambda \otimes q_\lambda \arrow[mapsto, r] &  \sum_\lambda p_\lambda q_\lambda
	\end{tikzcd}\]
	where we regard $p_\lambda \in \CC[X]$ and $q_\lambda \in \CC[\check{X}]$ as algebraic differential operators on $X$.
\end{theorem}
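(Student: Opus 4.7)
The plan is to follow the approach of Howe--Umeda \cite{HU91} and Knop \cite{Kn98}, exploiting the multiplicity-free structure of $\CC[X]$ and $\CC[\check{X}]$ together with the order filtration on $\mathcal{D}(X)$.

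First, the middle equality $\CC[X \times \check{X}]^G = \bigoplus_\lambda (\mathcal{P}_\lambda \otimes \mathcal{D}_\lambda)^G$ is an immediate consequence of \eqref{eqn:P-D} and Schur's lemma: the simple $G$-module $\mathcal{P}_\mu$ has lowest weight $-\mu$ and is therefore isomorphic to $\mathcal{D}_\mu^\vee$, so $(\mathcal{P}_\mu \otimes \mathcal{D}_\nu)^G \simeq \Hom_G(\mathcal{D}_\mu, \mathcal{D}_\nu)$ vanishes unless $\mu = \nu$, in which case it is one-dimensional.

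Next, equip $\mathcal{D}(X)$ with the order filtration $\mathcal{D}^{\leq k}(X)$. Since $X$ is a vector space, there is a canonical $G$-equivariant isomorphism of graded algebras
\[
\gr \mathcal{D}(X) \simeq \CC[X] \otimes S^\bullet X \simeq \CC[X \times \check{X}],
\]
where $X$ is identified with linear functions on $\check{X}$ through the canonical pairing. Taking $G$-invariants is exact because $G$ is reductive, hence $\gr \mathcal{D}(X)^G \simeq \CC[X \times \check{X}]^G$. For a nonzero $c_\lambda = \sum_i p_{\lambda,i} \otimes q_{\lambda,i} \in (\mathcal{P}_\lambda \otimes \mathcal{D}_\lambda)^G$ with $p_{\lambda,i}, q_{\lambda,i}$ homogeneous of degrees $a_\lambda, b_\lambda$, the image $C(c_\lambda) = \sum_i p_{\lambda,i} q_{\lambda,i}$ is an operator of order $\leq b_\lambda$ whose principal symbol in $\gr_{b_\lambda} \mathcal{D}(X) \simeq \CC[X] \otimes S^{b_\lambda} X$ equals precisely $c_\lambda$ under the identification above, because the commutator corrections $[p_{\lambda,i}, q_{\lambda,i}]$ are of strictly lower order. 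Hence $C$ is filtration-preserving and $\gr C$ coincides with the canonical identification, which forces $C$ to be a linear isomorphism.

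Finally, the algebra structure. Both sides are commutative: the LHS tautologically, and the RHS by Lemma \ref{prop:auto-regularity}. Showing that $C$ is multiplicative, not merely linear, is the so-called \emph{abstract Capelli problem}, and is the main obstacle; crucially, it is \emph{not} a formal consequence of $\gr C$ being a ring isomorphism, since a filtered linear isomorphism inducing the identity on associated gradeds need not respect multiplication. Howe--Umeda \cite{HU91} settle this case by case for the classical multiplicity-free representations via explicit Capelli-type identities. Knop's uniform proof in \cite{Kn98} proceeds through a Harish-Chandra-type isomorphism for multiplicity-free spaces: both $\CC[X \times \check{X}]^G$ and $\mathcal{D}(X)^G$ are embedded into the same polynomial algebra attached to the spherical weights of $X$, and the two embeddings are shown to agree on the Capelli generators $c_\lambda$ by computing the scalar with which $C(c_\lambda)$ acts on each simple summand $\mathcal{P}_\mu \subset \CC[X]$. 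Agreement on these generators then yields multiplicativity of $C$.
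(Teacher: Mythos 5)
The paper itself offers no proof of this Theorem; it is quoted verbatim with pointers to Howe--Umeda and Knop, so there is no internal argument to compare against. With that caveat, your outline correctly identifies the logical structure: the identification $\CC[X\times\check X]^G = \bigoplus_\lambda(\mathcal P_\lambda\otimes\mathcal D_\lambda)^G$ by Schur's lemma, the linear bijectivity of $C$ via the order filtration and the symbol map on $\gr\mathcal D(X)\simeq\CC[X\times\check X]$ (the commutator terms being of strictly lower order, and invariants commuting with $\gr$ by reductivity), and multiplicativity as the genuine content. Your warning that a filtered linear isomorphism whose associated graded is a ring isomorphism need not itself respect products is exactly the right thing to flag; this is the abstract Capelli problem.

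The one place where the sketch does not yet close into a proof is the last sentence. As written, ``the two embeddings are shown to agree on the Capelli generators $c_\lambda$ \dots\ Agreement on these generators then yields multiplicativity'' is not a valid deduction if ``generators'' means a minimal set of algebra generators $c_{\chi_1},\dots,c_{\chi_m}$: for a merely \emph{linear} map $C$, agreement with a ring homomorphism on ring generators gives no control over $C$ applied to products, precisely because multiplicativity of $C$ is what is at stake. The deduction only goes through if one verifies $\beta\circ C = \alpha$ on a \emph{linear basis}, i.e.\ on every $c_\lambda$, $\lambda\in\Lambda(X)^+$, after which $C=\beta^{-1}\circ\alpha$ is automatically a ring isomorphism. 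Separately, the existence of a ring embedding $\alpha\colon\CC[X\times\check X]^G\hookrightarrow\CC[\rho+\mathfrak a_X^*]^{W_X}$ matching the Harish--Chandra image is itself a substantive claim; what is directly available in Knop (and what the present paper quotes in Proposition \ref{prop:top-nonvanishing}) is the map $\overline c$ computing only the \emph{top-degree} part of $\mathrm{HC}(C(E))$, so the full comparison requires more than your sketch acknowledges. In short, the outline is sound in structure, but the final step needs to be tightened from ``agreement on generators'' to ``agreement on a linear basis,'' and the existence and compatibility of the second embedding should be stated as the nontrivial input it is.
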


Notice that if $G$, $X$ descend to a subfield of $\CC$, so does $C$.

We will mainly use the terms with $\dim \mathcal{P}_\lambda = 1 = \dim \mathcal{D}_\lambda$. In other words, we consider relative invariants $p \in \CC[X]$ and $q \in \CC[\check{X}]$ with opposite eigencharacters $-\lambda$ and $\lambda$. Then $p \otimes q$ are automatically $G$-invariant and $C(p \otimes q)$ is an instance of the \emph{Capelli operators} introduced in \cite{HU91}.

\subsection{Knop's Harish-Chandra isomorphism}
For any smooth $G$-variety $X$, Knop \cite[Theorem 6.5]{Kn94} has defined a Harish-Chandra isomorphism which realizes $\mathfrak{Z}(X)$ as the coordinate algebra of some explicitly defined variety. Below we review the simpler case of multiplicity-free spaces, following \cite{Kn98}.

Fix a multiplicity-free $G$-space $X$ (Definition \ref{def:MF-space}) over $\CC$, with open $G$-orbit $X^+$. Fix a Borel subgroup $B \subset G$ and let $T := B/B_{\mathrm{der}}$. Let $W$ be the corresponding abstract Weyl group acting on $T$; see \cite[p.137]{CG10}. Define
\begin{itemize}
	\item $\Lambda(X)^+ := \left\{ \lambda \in \subset \mathbf{X}^*(T): \mathcal{P}_\lambda \neq 0 \right\}$ where $\mathcal{P}_\lambda$ is as in \eqref{eqn:P-D}, and let $\Lambda(X) \subset \mathbf{X}^*(T)$ be the subgroup generated by $\Lambda(X)^+$;
	\item $\mathfrak{a}^*_X := \Lambda(X) \otimes \R$, which is a subspace of $\mathfrak{a} := \mathbf{X}^*(T) \otimes \R$;
	\item $\rho := \frac{1}{2} \sum \alpha \; \in \mathfrak{a}^*$ where $\alpha$ ranges over the positive roots;
	\item $W_X$: the little Weyl group of $X^+$, which is a reflection group acting on $\mathfrak{a}_\mathbf{X}^*$ and embeds into the normalizer $N_W\left( \rho + \mathfrak{a}_\mathbf{X}^* \right)$.
\end{itemize}
By \cite[3.2]{Kn98}, the submonoid $\Lambda(X)^+$ in $\mathbf{X}^*(T)$ is generated by linearly independent elements $\chi_1, \ldots, \chi_m$ in $\mathbf{X}^*(T)$, and its $\Z$-span $\Lambda(X)$ is the group of all weights of $B$-eigenfunctions in $\CC(\check{X})$. We refer to \textit{loc.\ cit.} for further details.

For all $D \in \mathcal{D}(X)^G$ and $\lambda \in \Lambda(X)$, the action of $D$ on $\mathcal{P}_\lambda$ must be a scalar, say $c_D(\lambda)$, due to multiplicity-freeness. We obtain a map $\mathcal{D}(X)^G \to \mathrm{Maps}(\Lambda(X)^+, \CC)$, mapping $D$ to $c_D$.

\begin{theorem}[F.\ Knop {\cite[4.8]{Kn98}}]\label{prop:Knop-HC}
	For every $D \in \mathcal{D}(X)^G$, the function $c_D$ extends uniquely to a polynomial $c_D \in \CC[\mathfrak{a}_\mathbf{X}^*]$, and the map
	\[\begin{tikzcd}[row sep=tiny]
		\mathrm{HC}: \mathcal{D}(X)^G \arrow[r] & \CC\left[ \rho + \mathfrak{a}_\mathbf{X}^* \right] \\
		D \arrow[mapsto, r] & {[ x \mapsto c_D(x - \rho) ]}
	\end{tikzcd}\]
	is an injective homomorphism of $\CC$-algebras, with image equal to $\CC[\rho + \mathfrak{a}_\mathbf{X}^*]^{W_X}$ or equivalently $\CC[\left( \rho + \mathfrak{a}_\mathbf{X}^* \right) /\!/ W_X]$, where $/\!/$ denotes the categorical quotient.
\end{theorem}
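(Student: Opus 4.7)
The plan is to construct $\mathrm{HC}$ in three layers: first extract the scalar eigenvalues $c_D(\lambda)$ from multiplicity-freeness, then show that $c_D$ is polynomial via the Capelli isomorphism, and finally identify the image with the $W_X$-invariants by an associated-graded argument. Since $\CC[X] = \bigoplus_{\lambda \in \Lambda(X)^+} \mathcal{P}_\lambda$ decomposes with multiplicity one and any $D \in \mathcal{D}(X)^G$ commutes with $G$, each $\mathcal{P}_\lambda$ is $D$-stable and Schur's lemma produces a scalar $c_D(\lambda)$. The map $D \mapsto c_D$ is additive, and multiplicative because $\mathcal{D}(X)^G$ is commutative by Lemma \ref{prop:auto-regularity}, so the $\mathcal{P}_\lambda$ are simultaneous eigenlines for the whole algebra. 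Injectivity is immediate: if $c_D \equiv 0$ then $D$ annihilates $\CC[X]$, which is Zariski-dense in $\CC[X^+]$, whence $D$ vanishes in $\mathcal{D}(X^+)^G = \mathcal{D}(X)^G$.

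Next I would prove polynomiality of $c_D$. The Capelli isomorphism (Theorem \ref{prop:Capelli}) spans $\mathcal{D}(X)^G$ by operators $C(p \otimes q)$ with $p \in \mathcal{P}_\nu$ and $q \in \mathcal{D}_\nu$. I would evaluate such an operator on a highest-weight vector $v_\mu$ of $\mathcal{P}_\mu$: multiplication by $p$ sends $\mathcal{P}_\mu$ into $\mathcal{P}_{\mu+\nu}$, and $q$ (viewed as a constant-coefficient differential operator) pairs it back, so the resulting scalar on $v_\mu$ is a polynomial expression in the coordinates of $\mu$ in the basis $\chi_1, \ldots, \chi_m$ of $\Lambda(X)$. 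Since the $\chi_i$ are a basis of $\mathfrak{a}_X^*$ and $\Lambda(X)^+$ is Zariski-dense there, $c_D$ extends uniquely to a polynomial on $\mathfrak{a}_X^*$; the $\rho$-shift is the standard normalization that makes the image symmetric under the little Weyl group.

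For $W_X$-invariance of the image, I would combine the canonical homomorphism $\mathcal{Z}(\mathfrak{g}) \to \mathcal{D}(X)^G$ with the classical Harish-Chandra isomorphism $\mathcal{Z}(\mathfrak{g}) \cong \CC[\mathfrak{a}^*]^W$ and the module-finiteness of $\mathcal{D}(X)^G$ over the image of $\mathcal{Z}(\mathfrak{g})$ (Lemma \ref{prop:auto-regularity}); together with the embedding $W_X \hookrightarrow N_W(\rho + \mathfrak{a}_X^*)$ this forces $W_X$-invariance by restriction from $W$. For surjectivity onto $\CC[\rho + \mathfrak{a}_X^*]^{W_X}$, I would filter $\mathcal{D}(X)^G$ by order of differential operators and pass to the symbol algebra, which is $\CC[T^*X]^G$; Knop's moment-map analysis of spherical cotangent bundles identifies this graded algebra with $\CC[\mathfrak{a}_X^*]^{W_X}$, and matching the leading term of $\mathrm{HC}$ against this identification promotes the graded match to a filtered isomorphism by induction on order.

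The main obstacle is precisely this last surjectivity step, which rests on Knop's nontrivial identification of $\CC[T^*X]^G$ with the invariants under the little Weyl group, developed via standard embeddings and the valuation cone of a spherical variety. I would take this structural result as a black box and cite \cite{Kn94, Kn98} rather than reprove it here, since it lies well outside the analytic scope of the present article.
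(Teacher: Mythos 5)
The paper does not actually prove this theorem: it is stated as a citation to Knop~\cite[4.8]{Kn98} and treated as a black box throughout, with only some surrounding setup (the decomposition of $\CC[X]$ into $\mathcal{P}_\lambda$, the Capelli isomorphism, and the definitions of $\Lambda(X)$, $\mathfrak{a}_X^*$, $W_X$) reproduced in \S\ref{sec:diff-op}. Your decision to ultimately take the surjectivity onto $\CC[\rho + \mathfrak{a}_X^*]^{W_X}$ as a cited structural result is therefore aligned with the paper's stance, and your opening steps --- extracting $c_D(\lambda)$ from multiplicity-freeness via Schur's lemma, deducing injectivity from the faithfulness of $\mathcal{D}(X)$ on $\CC[X]$ --- are correct.

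However, the intermediate reasoning you offer for $W_X$-invariance of the image is a genuine gap. You argue that because $\mathcal{D}(X)^G$ is module-finite over the image of $\mathcal{Z}(\mathfrak{g})$ and $W_X$ embeds in $N_W(\rho + \mathfrak{a}_X^*)$, invariance is ``forced by restriction from $W$.'' This does not follow: restricting $\CC[\mathfrak{a}^*]^W$ to $\rho + \mathfrak{a}_X^*$ produces a subalgebra of the $N_W(\rho + \mathfrak{a}_X^*)$-invariants (hence of the $W_X$-invariants), but an algebra that is merely a finite module over a $W_X$-invariant subalgebra need not itself be $W_X$-invariant. In Knop's actual argument the $W_X$-invariance is established via the geometry of the cotangent bundle and the moment map (and the equality with the full ring of $W_X$-invariants is the hard point); your paragraph as written gives the false impression that a soft module-finiteness argument suffices. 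The polynomiality of $c_D$ via Capelli operators is also not as routine as your sketch suggests --- multiplication by $p \in \mathcal{P}_\nu$ need not send $\mathcal{P}_\mu$ into $\mathcal{P}_{\mu+\nu}$ alone --- and this too requires Knop's degree estimates. Since the paper only cites the theorem, these gaps do not affect the paper, but they do mean your sketch should not be read as a proof outline beyond the first paragraph.
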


\begin{remark}\label{rem:inf-char}
	Whenever $\mathcal{D}(X)^G$ acts on some $\CC$-vector space $V$ and $v \in V$ is a joint generalized eigenvector therein, we may attach an \emph{infinitesimal character} $\chi_v$ to $v$; it is an element of $\left( \rho + \mathfrak{a}_\mathbf{X}^* \right) /\!/ W_X$. Specifically, for all $D \in \mathcal{D}(X)^G$ we have
	\[ N \gg 0 \implies \left(D - \mathrm{HC}(D)(\chi_v) \cdot \identity_V \right)^N v = 0. \]
\end{remark}

As in \S\ref{sec:relative-invariants}, define $\mathbf{X}^*_\rho(G) \subset \mathbf{X}^*(G)$ to be the lattice of eigencharacters of relative invariants. In fact, $\mathbf{X}^*_\rho(G) \subset \Lambda(X)^{W_X}$. Every relative invariant can be viewed as an algebraic differential operator on $X^+$ of order zero.

\begin{remark}
	Let $\lambda \in \mathbf{X}^*_\rho(G) \otimes \CC$. The translation $x \mapsto x + \lambda$ makes sense on $(\rho + \mathfrak{a}_\mathbf{X}^*) /\!/ W_X$ since $\lambda$ is $W_X$-invariant; in fact $\lambda$ is even $W$-invariant.
\end{remark}

\begin{proposition}[Algebraic twists]\label{prop:HC-twist}
	Let $f \in \CC(X)$ be a relative character of eigencharacter $\lambda \in \mathbf{X}^*_\rho(G)$. For all $D \in \mathcal{D}(X)^G$ and $s \in \Z$, the differential operator $D_{f, s} := f^{-s} \circ D \circ f^s \in \mathcal{D}(X^+)$ belongs to $\mathcal{D}(X)^G$; furthermore,
	\[ \mathrm{HC}(D_{f, s})(x) = \mathrm{HC}(D)(x - s\lambda), \quad x \in (\rho + \mathfrak{a}_\mathbf{X}^*) /\!/ W_X . \]
\end{proposition}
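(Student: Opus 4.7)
The plan is to first verify $D_{f,s} \in \mathcal{D}(X)^G$, and then establish the Harish-Chandra identity in stages, starting from a model case and bootstrapping to the general one.

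The $G$-invariance of $D_{f,s}$ is immediate: since ${}^g f^{\pm s} = \lambda(g)^{\pm s} f^{\pm s}$ while ${}^g D = D$, the scalars $\lambda(g)^{\pm s}$ cancel in the conjugation, giving ${}^g D_{f,s} = D_{f,s}$. Thus $D_{f,s} \in \mathcal{D}(X^+)^G$, and Lemma~\ref{prop:auto-regularity} upgrades this to $D_{f,s} \in \mathcal{D}(X)^G$.

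The heart of the argument is the model case where $f \in \CC[X]$ is polynomial and $s \geq 0$. Fix $\mu \in \Lambda(X)^+$. Under Knop's convention that $\mathcal{P}_\nu$ has lowest $T$-weight $-\nu$, the $1$-dimensional $G$-module $\CC f$ has weight $\lambda$, so it sits in $\mathcal{P}_{-\lambda}$; in particular $-\lambda \in \Lambda(X)^+$. The weight computation $g \cdot (f^s p) = \lambda(g)^s f^s \cdot (g \cdot p)$ shows that the $G$-stable subspace $f^s \mathcal{P}_\mu \subseteq \CC[X]$ is irreducible and abstractly isomorphic to $\mathcal{P}_\mu \otimes \CC_{s\lambda}$, hence has lowest weight $-(\mu - s\lambda)$. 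Since $\mu$ and $-s\lambda$ both lie in the monoid $\Lambda(X)^+$, so does $\mu - s\lambda$, and the multiplicity-free decomposition~\eqref{eqn:P-D} of $\CC[X]$ forces the equality $f^s \mathcal{P}_\mu = \mathcal{P}_{\mu - s\lambda}$ of submodules. For any $p \in \mathcal{P}_\mu$ we therefore have
\[ D_{f,s}(p) = f^{-s} D(f^s p) = c_D(\mu - s\lambda) \cdot p, \]
so $c_{D_{f,s}}(\mu) = c_D(\mu - s\lambda)$ for all $\mu \in \Lambda(X)^+$. By Theorem~\ref{prop:Knop-HC} both sides are restrictions of polynomials on $\mathfrak{a}^*_X \otimes \CC$, and $\Lambda(X)^+$, as a full-dimensional submonoid whose $\R$-span is $\mathfrak{a}^*_X$, is Zariski dense there; the polynomial identity therefore holds throughout, which translates into $\mathrm{HC}(D_{f,s})(x) = \mathrm{HC}(D)(x - s\lambda)$ after the $\rho$-shift.

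To remove the restrictions on $s$ and $f$: for polynomial $f$ and $s = -t < 0$, the identity $D = (D_{f,-t})_{f,t}$, combined with the model case applied to $E := D_{f,-t}$, yields $\mathrm{HC}(D)(x) = \mathrm{HC}(D_{f,-t})(x - t\lambda)$, from which the formula for negative $s$ follows. For an arbitrary $f \in \CC(X)$, I would decompose $\lambda = \alpha - \beta$ with $\alpha, \beta \in \Lambda^+_\rho$ and pick polynomial relative invariants $p, q$ of weights $\alpha, \beta$; since $p$ and $q$ commute as zeroth-order operators, one checks $D_{p/q, s} = (D_{p,s})_{q,-s}$, and chaining the two polynomial-case shifts produces the total shift $s\alpha - s\beta = s\lambda$. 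Finally, any $f$ with eigencharacter $\lambda$ differs from $p/q$ by a $G$-invariant rational function, which must be a nonzero constant by prehomogeneity, and conjugation by a constant is trivial; hence $D_{f,s} = D_{p/q, s}$. The central obstacle throughout is the identification $f^s \mathcal{P}_\mu = \mathcal{P}_{\mu - s\lambda}$ inside $\CC[X]$, which rests on the multiplicity-freeness at the core of Knop's theory; everything else is formal conjugation arithmetic followed by a Zariski-density argument.
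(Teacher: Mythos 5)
Your proof is correct and reaches the same identity $c_{D_{f,s}}(\mu) = c_D(\mu - s\lambda)$ via essentially the same computation (apply $D$ to $f^s h$ with $h$ a lowest-weight vector, and use multiplicity-freeness), but the organization is genuinely different. The paper handles arbitrary rational $f$ and arbitrary $s \in \Z$ in one stroke: it picks $\mu$ ``sufficiently positive'' so that both $\mu$ and $\mu - s\lambda$ lie in $\Lambda(X)^+$, and then the rational function $h f^s$ is forced to be a polynomial lowest-weight vector in $\mathcal{P}_{\mu - s\lambda}$ (this uses, implicitly, that the $B$-semi-invariants of a fixed weight in $\CC(X)$ form a line — a standard fact for spherical $X$). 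Your argument instead isolates the clean model case where $f$ is a \emph{polynomial} relative invariant and $s \geq 0$: there $\CC f \cong \mathcal{P}_{-\lambda}$ and $f^s \mathcal{P}_\mu$ is an honest $G$-submodule of $\CC[X]$, so multiplicity-freeness identifies it with $\mathcal{P}_{\mu - s\lambda}$ without any rational-function bookkeeping, and the identity holds for \emph{all} $\mu \in \Lambda(X)^+$ rather than just sufficiently positive ones. You then bootstrap: negative $s$ via the involution $D = (D_{f,-t})_{f,t}$, and rational $f$ via the factorization $f = c\,p/q$ with $p, q$ polynomial relative invariants (valid since $\mathbf{X}^*_\rho(G)$ is generated by the eigencharacters of the codimension-one components of $\partial X$, with polynomial basic invariants, as in Kimura's theory — and if $\partial X$ has no codimension-one components then $\mathbf{X}^*_\rho(G) = 0$ and there is nothing to prove). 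Both routes end with the same Zariski-density argument on $\Lambda(X)^+ \subset \mathfrak{a}_\mathbf{X}^*$. Your version is longer but avoids the implicit ``$h f^s$ is a polynomial'' step; the paper's is shorter because it accepts that rational relative invariants times polynomial lowest-weight vectors can again be polynomial when weights line up.
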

\begin{proof}
	Clearly $D_{f, s}$ is $G$-invariant. It extends to $X$ by Lemma \ref{prop:auto-regularity}. For the remaining assertion, we have to compare $c_D$ and $c_{D_{f, s}}$. Let $\mu \in \Lambda(X)^+$ be ``sufficiently positive'' (see below), and let $h \in \mathcal{P}_\mu$ be a corresponding element of lowest weight $-\mu$. We have
	\[\begin{tikzcd}[row sep=tiny]
		h \arrow[mapsto, r, "f^s"] & hf^s \arrow[mapsto, r, "D"] & c_D(\mu - s\lambda) hf^s \arrow[mapsto, r, "f^{-s}"] & c_D(\mu - s\lambda) h = D_{f, s} h \\
		-\mu & -\mu + s\lambda & -\mu + s\lambda & -\mu
	\end{tikzcd}\]
	where the second row indicates the weights; notice that the functions in the first row are all lowest weight vectors. Here we assume that $\mu, \mu - s\lambda \in \Lambda(X)^+$. For such $\mu$, we infer that
	\[ c_{D_{f, s}}(\mu) = c_D(\mu - s\lambda). \]
	As $\Lambda(X)$ is a full-rank lattice in $\mathfrak{a}_\mathbf{X}^*$, it is then elementary to conclude that $c_{D_{f, s}}(x) = c_D(x - s\lambda)$ for all $x \in \mathfrak{a}_\mathbf{X}^*$.
\end{proof}

\begin{proposition}\label{prop:top-nonvanishing}
	Denote by $c_D^{\mathrm{top}}$ the top homogeneous component of $c_D \in \CC[\mathfrak{a}^*]$, for every $D \in \mathcal{D}(X)^G$. Consider the data
	\begin{itemize}
		\item $f \in \CC[X]$, $\check{f} \in \CC[\check{X}]$: polynomial relative invariants with opposite eigencharacters;
		\item $D := C(f \otimes \check{f}) \in \mathcal{D}(X)^G$;
		\item $\mu \in \mathbf{X}^*_\rho(G) \subset \mathfrak{a}_\mathbf{X}^*$: the eigencharacter of some non-degenerate relative invariant $h \in \CC[X]$ for $(G, \rho, X)$ (recall \S\ref{sec:relative-invariants}).
	\end{itemize}
	Then $c_D^{\mathrm{top}}(-\mu) \neq 0$.
\end{proposition}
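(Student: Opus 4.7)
The plan is to compute the leading term of $c_D$ along the ray $k \mapsto k\mu$ ($k \in \Z_{\geq 1}$) by evaluating $D$ on the explicit vector $h^k$, and then to deduce nonvanishing from the non-degeneracy of $h$. Since $h$ is a $G$-eigenvector of eigencharacter $\mu$, the power $h^k$ spans the one-dimensional submodule $\mathcal{P}_{k\mu} \subset \CC[X]$, so by multiplicity-freeness we have $D(h^k) = c_D(k\mu) \cdot h^k$.

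Set $d := \deg \check{f}$. Viewing $\check{f} \in \CC[\check{X}]$ as a constant-coefficient differential operator on $X$, a direct iteration of the product rule (for any multi-index $\underline{\alpha}$ of length $d$ one has $\partial^{\underline{\alpha}}(h^k) = k^d h^{k-d} \prod_i (\partial_i h)^{\alpha_i} + O(k^{d-1})$) gives the leading asymptotic
\[
D(h^k) \;=\; f \cdot \check{f}(h^k) \;=\; k^d\, h^{k-d}\, f\, \check{f}(\nabla h) \;+\; O(k^{d-1}),
\]
where $\nabla h : X^+ \to \check{X}$ is the gradient and $\check{f}(\nabla h)$ denotes composition with the polynomial $\check{f}$. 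Comparing with $D(h^k) = c_D(k\mu)\, h^k$ yields
\[
c_D(k\mu) \;=\; k^d \cdot \frac{f \cdot \check{f}(\nabla h)}{h^d} \;+\; O(k^{d-1}).
\]

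A direct tally of eigencharacters shows that $f \cdot \check{f}(\nabla h)/h^d$ is $G$-invariant on $X^+$: the factors $f$ and $\check{f}$ contribute characters $\lambda$ and $-\lambda$ (opposite, by hypothesis), while $\nabla h$ transforms with character $\mu$ twisted by $\check{\rho}$, so $\check{f}(\nabla h)$ acquires character $d\mu - \lambda$; the grand total is $d\mu$, cancelling $h^d$. Being a $G$-invariant regular function on the single open orbit $X^+$, this ratio is a constant $c \in \CC$. Together with the filtered version of Theorem \ref{prop:Knop-HC} (so $\deg c_D$ is at most the order $d$ of $D$), this identifies $c_D^{\mathrm{top}}(\mu) = c$, and hence $c_D^{\mathrm{top}}(-\mu) = (-1)^d c$ by homogeneity.

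It remains to show $c \neq 0$, which is the only place the non-degeneracy hypothesis enters. By definition, the logarithmic gradient $h^{-1}\nabla h : X^+ \to \check{X}$ is dominant; since $h$ is nowhere vanishing on $X^+$, multiplication by the scalar function $h$ preserves dominance, so $\nabla h$ itself has Zariski-dense image in $\check{X}$. As $\check{f}$ is a nonzero polynomial, it does not vanish identically on a Zariski-dense subset, whence $\check{f}(\nabla h)$ is a nonzero function on $X^+$ and $c \neq 0$. The only delicate step is the leading-order asymptotic in the first display, which is elementary but requires careful multi-index bookkeeping; the remainder is routine eigencharacter tracking together with the defining property of non-degeneracy.
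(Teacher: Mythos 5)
Your approach is correct in essence and genuinely different from the paper's. The paper invokes Knop's explicit formula \cite[4.5]{Kn98} for the leading symbol, $c_D^{\mathrm{top}} = \overline{c}(f \otimes \check{f})$, which realizes $c_D^{\mathrm{top}}$ as a restriction of $f \otimes \check{f}$ to an affine subspace $\mathfrak{a}_\mathbf{X}^*(v) \subset X \times \check{X}$, and then evaluates at $\chi = -\mu$ using $\phi_{-\mu}(v) = (h^{-1}\dd h)(v) \in \check{X}^+$. You instead recompute the top coefficient along the ray $k \mapsto k\mu$ by applying $D = C(f\otimes \check{f})$ to the explicit eigenvectors $h^k$ and tracking the leading power of $k$. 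This is more hands-on and bypasses the formula for $\overline{c}$, needing only that Knop's Harish-Chandra isomorphism is filtered (order bounds degree) together with the explicit Capelli description of $D$. Both routes ultimately identify the relevant value of $c_D^{\mathrm{top}}$ with the same constant $f \cdot \check{f}(h^{-1}\dd h)$ on $X^+$.

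Two repairs are needed. First, a sign slip: with the paper's convention $\mathcal{P}_\lambda$ has lowest weight $-\lambda$, a relative invariant $h$ of eigencharacter $\mu$ lies in $\mathcal{P}_{-\mu}$, and $h^k$ spans $\mathcal{P}_{-k\mu}$. The identity should read $D(h^k) = c_D(-k\mu)\,h^k$; this in fact simplifies the end of your argument, giving $c_D^{\mathrm{top}}(-\mu) = c$ directly without the extra parity factor. Second, and more substantively, the final justification of $c \neq 0$ has a real gap: it is \emph{false} in general that multiplying a dominant map by a nowhere-vanishing regular scalar preserves dominance. For instance, $\psi(x,y) = (1/x, 1/y)$ on $(\CC^\times)^2 \to \CC^2$ is dominant and $h(x,y)=x$ is a polynomial nonvanishing on $(\CC^\times)^2$, yet $h\cdot\psi = (1, x/y)$ has image in a hyperplane. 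The correct and shorter argument uses homogeneity instead: since $\check{f}$ is homogeneous of degree $d$, one has $\check{f}(\nabla h) = h^d\,\check{f}(h^{-1}\dd h)$; the map $h^{-1}\dd h: X^+ \to \check{X}$ is dominant by the very definition of non-degeneracy (and in fact an isomorphism onto $\check{X}^+$ by \cite[Theorem 2.16]{Ki03}), while $\check{f}$ is a nonzero polynomial, so $\check{f}(h^{-1}\dd h) \not\equiv 0$ on $X^+$ and hence $c = f\cdot\check{f}(h^{-1}\dd h) \neq 0$. With these two fixes your proof is sound and recovers the same leading constant as the paper's.
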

\begin{proof}
	More generally, consider a homogeneous element $E \in (\mathcal{P}_\lambda \otimes \mathcal{D}_\lambda)^G$ with $D := C(E)$. By \cite[4.5]{Kn98}, $c_D^{\mathrm{top}}$ equals $\overline{c}(E)$ where
	\[ \overline{c}: \left( \CC[X] \dotimes{\CC} \CC[\check{X}] \right)^G \to \CC[\mathfrak{a}_\mathbf{X}^*] \]
	is defined as follows. Let $\mathring{X} \subset X^+$ be the open $B$-orbit. There is a well-defined map $\mathfrak{a}_\mathbf{X}^* \times \mathring{X} \xrightarrow{\phi} \check{X}$ given by
	\[ (\chi, v) = \left( \sum_i a_i \lambda_i, v \right) \longmapsto \sum_i a_i \underbracket{\left( f_i^{-1} \dd f_i \right)}_{\text{regular on $\mathring{X}$}}(v) =: \phi_\chi(v), \]
	where $a_i \in \CC$ and $\lambda_i \in \Lambda(X)^+$, with a $B$-eigenfunction $f_i \in \mathcal{P}_{\lambda_i}$; so $\phi$ is linear in $\chi$. For each $v \in \mathring{X}$, set
	\[ \mathfrak{a}_\mathbf{X}^*(v) := \left\{ (v, \phi_\chi(v)) \in X \times \check{X} : \chi \in \mathfrak{a}_\mathbf{X}^* \right\}. \]
	
	By \cite[p.307]{Kn98}, $\chi \mapsto (v, \phi_\chi(v))$ defines an isomorphism from $\mathfrak{a}_\mathbf{X}^*$ onto the affine subspace $\mathfrak{a}_\mathbf{X}^*(v) \subset X \times \check{X}$. Now we put
	\[ \overline{c}(E) := E |_{\mathfrak{a}_\mathbf{X}^*(v)}, \; \text{identified as an element of } \CC[\mathfrak{a}^*_X]. \]

	Next, take $E := f \otimes \check{f}$, noting that relative invariants are homogeneous. It remains to prove that $\overline{c}(E)(-\mu) \neq 0$.

	Let $h \in \CC[X]$ be a non-degenerate relative invariant of eigencharacter $\mu$, so $h \in \mathcal{P}_{-\mu}$. Take $\chi = -\mu$ in the construction above to see that for any $v \in \mathring{X}$,
	\[ \overline{c}(E)(-\mu) = (f \otimes \check{f})(v, \phi_\chi(v)) = f(v) \check{f}(\phi_\chi(v)) \neq 0 \]
	by the non-vanishing of relative invariants on $X^+$ and $\check{X}^+$, since $\phi_\chi(v) = (h^{-1} \dd h)(v) \in \check{X}^+$ by the non-degeneracy of $h$.
\end{proof}

\subsection{Analytic twists}
In this subsection we work primarily over $\R$. Let $(G, \rho, X)$ be as in Hypothesis \ref{hyp:PVS}. Take a pair $f \in \R[X]$, $\check{f} \in \R[\check{X}]$ of relative invariants as furnished by Corollary \ref{prop:rel-invariant-pair}. Theorem \ref{prop:Capelli} then affords us an invariant algebraic differential operator $C(f \otimes \check{f})$ on $X$.

On the other hand, for every $\lambda \in \Lambda_{\CC}$, we view $|f|^\lambda$ as a real-analytic differential operator of order zero on $X^+(\R)$. It makes sense to define the invariant real-analytic differential operator
\[ C_\lambda(f \otimes \check{f}) := |f|^{-\lambda} \circ C(f \otimes \check{f}) \circ |f|^\lambda, \quad \lambda \in \Lambda_{\CC} \]
on $X^+(\R)$. They act on $\mathcal{N}_\pi(X^+)$ in view of Definition \ref{def:D-action-N}, for every SAF representation $\pi$ of $G(\R)$.

\begin{remark}\label{rem:analytic-twist}
	Suppose that $h \in \R(X)$ is a relative invariant with eigencharacter $\mu \in \mathbf{X}^*_\rho(G)$, such that $h > 0$ and $h = |f|^\mu$ on $X^+(\R)$. The analytic twist $C_{s\mu}(f \otimes \check{f})$ then comes from the algebraic twist $C(f \otimes \check{f})_{h, s} \in \mathcal{D}(X)^G$ (Proposition \ref{prop:HC-twist}) for all $s \in \Z$.
\end{remark}

\begin{proposition}\label{prop:Capelli-holomorphy}
	Let $\pi$ be an SAF representation. For every $\eta \in \mathcal{N}_\pi(X^+)$, the family $C_\lambda(f \otimes \check{f})(\eta)$ inside $\mathcal{N}_\pi(X^+)$ is holomorphic in $\lambda \in \Lambda_{\CC}$.
\end{proposition}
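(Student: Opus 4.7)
The plan is to expand $C_\lambda(f \otimes \check{f}) = |f|^{-\lambda} \circ C(f \otimes \check{f}) \circ |f|^\lambda$ explicitly and show that it is in fact \emph{polynomial} in $\lambda$, with coefficients in the ring of $G(\R)$-invariant real-analytic differential operators on $X^+(\R)$. Holomorphy in $\mathcal{N}_\pi(X^+)$ will then be immediate from Lemma \ref{prop:holomorphy-test}.

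The key observation is that on $X^+(\R)$, the function $|f|^\lambda = \prod_i |f_i|^{\lambda_i}$ is nowhere vanishing and smooth, and its logarithmic derivatives are
\[
	|f|^{-\lambda} \partial_j |f|^\lambda \; = \; \partial_j \log |f|^\lambda \; = \; \sum_i \lambda_i \cdot \frac{\partial_j f_i}{f_i},
\]
which are polynomial in $\lambda$ with coefficients regular on $X^+$. Fixing coordinates on $X$ and writing $C(f \otimes \check{f}) = \sum_{|I| \le d} a_I \partial^I$ with $a_I \in \R[X]$, an elementary application of Leibniz's rule expressing $g^{-1} \circ \partial^I \circ g$ in terms of derivatives of $g$ yields the expansion
\[
	C_\lambda(f \otimes \check{f}) \; = \; \sum_{|J| \le d} P_J(\lambda) \, \partial^J,
\]
with each $P_J$ polynomial in $\lambda$ and having coefficients regular on $X^+$. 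The $G(\R)$-invariance of $C_\lambda(f \otimes \check{f})$ is automatic: $C(f \otimes \check{f})$ is $G$-invariant, and the two zeroth-order factors $|f|^{\pm \lambda}$ (each a $G(\R)$-eigenfunction with eigencharacter $|\omega|^{\pm \lambda}$) cancel under conjugation by $G(\R)$.

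For the final step, Definition \ref{def:D-action-N} applies (in the analytic variant) and gives, for each $v \in V_\pi$,
\[
	\bigl(C_\lambda(f \otimes \check{f}) \eta\bigr)(v) \; = \; \sum_{|J| \le d} P_J(\lambda) \cdot \partial^J\bigl(\eta(v)\bigr) \; \in \; C^\infty(X^+),
\]
which is visibly polynomial in $\lambda$. Consequently, for any $\mu \in \Lambda_{\CC}$, $v \in V_\pi^{K\text{-fini}}$ and $\xi \in C^\infty_c(X^+)$, the pairing $\int_{X^+(\R)} \bigl(C_\lambda(f \otimes \check{f}) \eta\bigr)(v) |f|^\mu \xi$ converges (by compact support of $\xi$ in $X^+$) and is polynomial, hence holomorphic, in $\lambda$. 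Lemma \ref{prop:holomorphy-test} then yields the asserted holomorphy inside $\mathcal{N}_\pi(X^+)$. No genuine obstacle arises; the only point that requires care is the verification of $G(\R)$-invariance of $C_\lambda(f \otimes \check{f})$ so that the action on $\mathcal{N}_\pi(X^+)$ is defined at all.
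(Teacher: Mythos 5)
Your proof is correct and rests on the same core observation as the paper's: that $C_\lambda(f\otimes\check f)$ is a differential operator whose coefficients are polynomial in $\lambda$ (with real-analytic coefficients on $X^+(\R)$), so its action on any fixed generalized matrix coefficient depends polynomially on $\lambda$. The only cosmetic difference is the final holomorphy test: you invoke Lemma~\ref{prop:holomorphy-test} directly (integrating against $\xi \in C_c^\infty(X^+)$), whereas the paper uses a variant of that lemma based on pointwise evaluation maps $\mathrm{ev}_x$ on $C^\infty(X^+)$, which sidesteps even the (trivial) convergence check; both routes are equally valid, and your explicit Leibniz expansion makes precise what the paper dismisses as ``obvious by unwinding various definitions.''
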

\begin{proof}
	The argument is a variant of that for Lemma \ref{prop:holomorphy-test}. For each $x \in X^+(\R)$, consider the evaluation map $\mathrm{ev}_x: C^\infty(X^+) \rightiso C^\infty(X^+; \R) \to \CC$ at $x$, where the first isomorphism comes from the trivialization of $\mathcal{L}^{1/2}$ in Lemma \ref{prop:avoid-density}. These maps are continuous and $\bigcap_x \Ker(\mathrm{ev}_x) = \{0\}$.
	
	Now consider the linear functionals $\eta \mapsto \mathrm{ev}_x(\eta(v))$ of $\mathcal{N}_\pi(X^+)$ where $(x,v) \in X^+(\R) \times V_\pi$. Their kernel have trivial intersection, hence they generate the dual of $\mathcal{N}_\pi(X^+)$. Thus there exists a finite subset $F \subset X^+(\R) \times V_\pi$ such that
	\begin{align*}
		\mathcal{N}_\pi(X^+) & \hookrightarrow \CC^F \\
		\eta & \mapsto (\mathrm{ev}_x(\eta(v)))_{(x, v) \in F}.
	\end{align*}
	It suffices to show that for each $(x,v) \in F$, the function $\mathrm{ev}_x\left( C_\lambda(f \otimes \check{f}) \eta(v) \right)$ is holomorphic in $\lambda \in \Lambda_{\CC}$. This is obvious by unwinding various definitions.
\end{proof}

\section{Functional equation}\label{sec:FE}
Throughout this section, $(G, \rho, X)$ is as in Hypothesis \ref{hyp:PVS}. The SAF representation $\pi$ of $G(\R)$ is assumed to have a central character. We also fix a maximal compact subgroup $K \subset G(\R)$.

The notations for $Z_\lambda$, $\check{Z}_\lambda$, etc.\ are as in \S\ref{sec:desiderata}. In particular, the range of convergence for $Z_\lambda$ is given by $\Re(\lambda) \relgg{X} \kappa$ where $\kappa$ is as in Theorem \ref{prop:convergence}.

\subsection{A decomposition}
Fix basic relative invariants $f_1, \ldots, f_r \in \R[X]$ for $(G, \rho, X)$, with eigencharacters $\omega_1, \ldots, \omega_r$.

Let $A_G \subset G$ be the maximal split central torus, and let $A_G(\R)^\circ$ be the identity connected component of $A_G(\R)$. On the other hand, let $\mathfrak{a}_G := \Hom(\mathbf{X}^*(G), \R)$ and $H_G: G(\R) \to \mathfrak{a}_G$ be the Harish-Chandra homomorphism characterized by $\lrangle{\chi, H_G(g)} = |\chi(g)|$ for all $\chi \in \mathbf{X}^*(G)$. Set $G(\R)^1 := \Ker(H_G)$. It is well-known that $H_G: A_G(\R)^\circ \rightiso \mathfrak{a}_G$, and multiplication induces an isomorphism of Lie groups
\[ A_G(\R)^\circ \times G(\R)^1 \rightiso G(\R). \]

Define
\begin{align*}
	G(\R)_\rho & := \left\{ g \in G(\R) : \forall \chi \in \mathbf{X}^*_\rho(G), \; |\chi(g)| = 1 \right\}, \\
	X^+(\R)_\rho & := \left\{ x \in X^+(\R) : \forall 1 \leq i \leq r, \; |f_i(x)|=1 \right\}.
\end{align*}
Therefore $G(\R)_\rho \supset G(\R)^1$ and $G(\R)_\rho$ acts on the right of $X^+(\R)_\rho$.

Note that $\mathfrak{a}_\rho := \Hom(\mathbf{X}^*_\rho(G), \R)$ is a quotient of $\mathfrak{a}_G$. We can and do choose a splitting to realize $\mathfrak{a}_\rho$ as a direct summand of $\mathfrak{a}_G$, and let $A_\rho := H_G^{-1}(\mathfrak{a}_\rho) \subset A_G(\R)^\circ$. Note that $(|\omega_1|, \ldots, |\omega_r|)$ induces $A_\rho \rightiso (\R^\times_{>0})^r$.

\begin{proposition}\label{prop:rho-decomposition}
	With the choices above, we have real-analytic isomorphisms
	\[\begin{tikzcd}[row sep=tiny]
	G(\R)_\rho \times A_\rho \arrow[r, "\sim"] & G(\R) \\
	(g, a) \arrow[mapsto, r] & ga
	\end{tikzcd}\]
	and
	\[\begin{tikzcd}[row sep=tiny]
	X^+(\R)_\rho \times A_\rho \arrow[r, "\sim"] & X^+(\R) \\
	(x, a) \arrow[mapsto, r] & xa \\
	\left( y r(y)^{-1}, r(y) \right) & y \arrow[mapsto, l]
	\end{tikzcd}\]
	where $r: X^+(\R) \to A_\rho$ is the map characterized by $|f_i(y)| = |\omega_i(r(y))|$ for all $1 \leq i \leq r$.
\end{proposition}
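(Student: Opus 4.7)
\medskip

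\noindent\textbf{Proof plan.}

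For the first decomposition, the plan is to reduce everything to the standard factorization $A_G(\R)^\circ \times G(\R)^1 \rightiso G(\R)$. By the choice of $\mathfrak{a}_\rho$ as a direct summand of $\mathfrak{a}_G$, we get a direct product decomposition $A_G(\R)^\circ = A_\rho \times A_\rho'$ where $A_\rho' := H_G^{-1}(\mathfrak{a}_\rho')$ for a complementary summand $\mathfrak{a}_\rho'$. Consider the real-analytic homomorphism
\[
	\Phi: G(\R) \longrightarrow (\R^\times_{>0})^r, \qquad g \longmapsto (|\omega_1(g)|, \ldots, |\omega_r(g)|).
\]
Its kernel is precisely $G(\R)_\rho$, since $\omega_1, \ldots, \omega_r$ generate $\mathbf{X}^*_\rho(G)$. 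The composite $A_\rho \hookrightarrow G(\R) \xrightarrow{\Phi} (\R^\times_{>0})^r$ is the isomorphism stated just before the Proposition, so $\Phi$ is surjective and $A_\rho \cap G(\R)_\rho = \{1\}$. Since $A_\rho \subset A_G$ is central in $G$, this gives a direct product decomposition $G(\R) = G(\R)_\rho \times A_\rho$, and the multiplication map is a real-analytic isomorphism.

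For the second decomposition, I would first define the map $r$. On $X^+(\R)$ the polynomials $f_i$ do not vanish, so each $|f_i|$ is a strictly positive real-analytic function; composing $y \mapsto (|f_1(y)|, \ldots, |f_r(y)|)$ with the inverse of the isomorphism $A_\rho \rightiso (\R^\times_{>0})^r$ induced by $(|\omega_1|, \ldots, |\omega_r|)$ yields a real-analytic map $r: X^+(\R) \to A_\rho$ satisfying $|f_i(y)| = |\omega_i(r(y))|$. The eigencharacter relation $f_i(xa) = \omega_i(a) f_i(x)$ then gives, for $x \in X^+(\R)_\rho$ and $a \in A_\rho$,
\[
	|f_i(xa)| = |\omega_i(a)|,
\]
which immediately shows both that the multiplication map $X^+(\R)_\rho \times A_\rho \to X^+(\R)$ is injective (recover $a$ from $(|f_i(xa)|)_i$, then $x$ by cancellation) and that, setting $x := y r(y)^{-1}$ and $a := r(y)$ for $y \in X^+(\R)$, one has $|f_i(x)| = |f_i(y)|/|\omega_i(r(y))| = 1$, i.e.\ $x \in X^+(\R)_\rho$. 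Thus $y \mapsto (yr(y)^{-1}, r(y))$ lands in $X^+(\R)_\rho \times A_\rho$ and is a two-sided inverse to multiplication.

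Finally, both the forward map $(x,a) \mapsto xa$ (restriction of the $G$-action) and the inverse $y \mapsto (y r(y)^{-1}, r(y))$ (built from $r$ and the group operations, both real-analytic) are real-analytic, so the bijection is a real-analytic isomorphism. There is no serious obstacle here: the only point to watch is the real-analyticity of $r$, which rests on the observation that $|f_i|=\sqrt{f_i^2}$ is real-analytic on the locus $\{f_i\neq 0\} \supset X^+(\R)$. Everything else is formal once the splitting $\mathfrak{a}_\rho \hookrightarrow \mathfrak{a}_G$ and the generating character tuple $(\omega_1,\ldots,\omega_r)$ are in place.
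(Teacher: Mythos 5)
Your proposal is correct and takes essentially the same approach as the paper, whose proof merely notes that $r(xa)=a$ for $(x,a) \in X^+(\R)_\rho \times A_\rho$ and calls the rest "routine"; you have simply fleshed out the routine parts (the kernel/surjectivity argument for $\Phi$, the verification that $y r(y)^{-1} \in X^+(\R)_\rho$, and the real-analyticity of $r$).
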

\begin{proof}
	The decomposition of $G(\R)$ is routine to verify. As for the decomposition of $X^+(\R)$, one observes that $r(xa) = a$ for $(x, a) \in X^+(\R)_\rho \times A_\rho$; it follows readily that the two maps are mutually inverse.
\end{proof}

\begin{example}
	In the Godement--Jacquet case (Example \ref{eg:GJ}), we have
	\begin{align*}
		G(\R)_\rho & = \left\{ (g,h) \in D^\times \times D^\times: |\mathrm{Nrd}(g)| = |\mathrm{Nrd}(h)| \right\}, \\
		X^+(\R)_\rho & = \left\{x \in D(\R): |\mathrm{Nrd}(x)| = 1 \right\}.
	\end{align*}
	Note that $A_G(\R) \simeq \R^\times \times \R^\times$ and $\mathfrak{a}_G \simeq \R^2$ canonically. Take the splitting $\mathfrak{a}_\rho \hookrightarrow \mathfrak{a}_G$ so that
	\[ \mathfrak{a}_\rho := \R \times \{0\}, \quad A_\rho := \left\{ (a, 1): a \in \R^\times_{> 0} \right\}. \]
	Pick $\mathrm{Nrd}$ to be the basic relative invariant. Then the map $r: X^+(\R) \to A_\rho$ above is simply $y \mapsto \left( |\mathrm{Nrd}(y)|^{1/n}, 1 \right)$. The decompositions in Proposition \ref{prop:rho-decomposition} are then evident.
\end{example}

Let $C^\infty(X^+_\rho)$ stand for the Fréchet space of $C^\infty$ half-densities over $X^+(\R)_\rho$, which is a smooth $G(\R)_\rho$-representation. Notice that $A_\rho$ is isomorphic to the vector space $\R^r$ as Lie groups, hence there exists an invariant half-density $\ell \neq 0$ on $A_\rho$.

\begin{proposition}\label{prop:N-restriction}
	Let $\pi$ be an SAF representation of $G(\R)$ with central character. Choose any invariant half-density $\ell \neq 0$ on $A_\rho$. We have an isomorphism of $\CC$-vector spaces
	\begin{align*}
		\mathcal{N}_\pi(X^+) & \rightiso \Hom_{G(\R)_\rho} \left( \pi, C^\infty(X^+_\rho) \right) \\
		\eta & \mapsto \left[ v \mapsto \ell^{-1} \eta(v)|_{X^+(\R)_\rho} \right]
	\end{align*}
\end{proposition}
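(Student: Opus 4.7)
The plan is to combine the decomposition of Proposition~\ref{prop:rho-decomposition} with the fact that $A_\rho \subset A_G(\R)^\circ \subset Z_G(\R)$, so that $A_\rho$ acts on $V_\pi$ through the central character $\omega_\pi$. First I would unpack how the isomorphisms of Proposition~\ref{prop:rho-decomposition} interact with the $G(\R)$-action: for $g = g_\rho a' \in G(\R)_\rho \cdot A_\rho$ and $(x,a) \in X^+(\R)_\rho \times A_\rho$, the centrality of $A_\rho$ gives
\[ (xa)\cdot (g_\rho a') = (xg_\rho)(aa'), \]
and the condition defining $G(\R)_\rho$ shows $xg_\rho \in X^+(\R)_\rho$. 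Hence the right action of $G(\R) \simeq G(\R)_\rho \times A_\rho$ on $X^+(\R) \simeq X^+(\R)_\rho \times A_\rho$ is the product of the $G(\R)_\rho$-action on the first factor and the translation action of $A_\rho$ on the second. Since $(\det\rho)^2 \in \mathbf{X}^*_\rho(G)$, elements of $G(\R)_\rho$ satisfy $|\det\rho|=1$.

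Next I would transport the density bundles. Under the product decomposition one has a canonical isomorphism $\mathcal{L}^{1/2}_{X^+} \simeq \mathcal{L}^{1/2}_{X^+_\rho} \boxtimes \mathcal{L}^{1/2}_{A_\rho}$ of $G(\R)$-equivariant line bundles, where $G(\R)_\rho$ acts trivially on the second factor and $A_\rho$ acts only by translation on the second factor. Since $A_\rho \simeq \R^r$ as Lie groups, a translation-invariant half-density $\ell$ trivializes $\mathcal{L}^{1/2}_{A_\rho}$. With this in hand, the restriction map
\[ R: \eta \mapsto \Bigl[ v \mapsto \ell^{-1}\cdot \eta(v)\big|_{X^+(\R)_\rho \times \{1\}} \Bigr] \]
is well-defined, takes values in $C^\infty(X^+_\rho)$, and is $G(\R)_\rho$-equivariant because $G(\R)_\rho$ preserves $X^+(\R)_\rho \times \{1\}$ and acts trivially on $\ell$. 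Continuity is clear from continuity of $\eta$.

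For the inverse I would exploit $A_\rho$-centrality. Given $\eta' \in \Hom_{G(\R)_\rho}(\pi, C^\infty(X^+_\rho))$, define
\[ \tilde\eta(v)\big|_{X^+(\R)_\rho \times \{a\}} := \omega_\pi(a)\cdot \bigl( \eta'(v) \boxtimes \ell \bigr)\big|_{X^+(\R)_\rho \times \{a\}}, \qquad a \in A_\rho. \]
Smoothness of $\tilde\eta(v)$ on $X^+(\R) \simeq X^+(\R)_\rho \times A_\rho$ follows from smoothness of $\omega_\pi$ on $A_\rho$ and of $\eta'(v)$ on $X^+(\R)_\rho$; continuity of $v \mapsto \tilde\eta(v)$ into $C^\infty(X^+)$ follows from that of $\eta'$ together with the local boundedness of $\omega_\pi$ and its derivatives. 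Equivariance under $G(\R) = G(\R)_\rho \times A_\rho$ can then be checked factor by factor: under $G(\R)_\rho$ it reduces to the equivariance of $\eta'$, since $G(\R)_\rho$ acts only on the first coordinate and fixes both $\ell$ and the scalar $\omega_\pi(a)$; under $A_\rho$, where $\pi(a') v = \omega_\pi(a') v$, the translation $(x,a) \mapsto (x, aa')$ on the half-density $\omega_\pi(a)(\eta'(v) \boxtimes \ell)$ produces exactly the factor $\omega_\pi(a')$ because $\ell$ is translation-invariant. Finally, $R\tilde\eta = \eta'$ by inspection at $a=1$, while $\tilde{R\eta} = \eta$ follows from the fact that any $\eta \in \mathcal{N}_\pi(X^+)$ is determined by its restriction to $X^+(\R)_\rho \times \{1\}$ via the $A_\rho$-equivariance $\eta(\pi(a)v) = \omega_\pi(a)\eta(v) = a\cdot \eta(v)$.

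The main obstacle, and the only real bookkeeping, is verifying that the canonical factorization $\mathcal{L}^{1/2}_{X^+} \simeq \mathcal{L}^{1/2}_{X^+_\rho} \boxtimes \mathcal{L}^{1/2}_{A_\rho}$ is $G(\R)$-equivariant with the stated actions on each factor, and that the $|\det\rho|^{1/2}$-twist built into the action on half-densities is absorbed correctly (it contributes nothing from $G(\R)_\rho$ since $|\det\rho|=1$ there, while the contribution from $A_\rho$ combines with the translation-invariance of $\ell$ in precisely the manner needed to match $\omega_\pi$). Once this compatibility is established, both directions of the bijection are elementary.
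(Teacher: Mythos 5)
Your proof is correct and follows the same route the paper takes, namely exploiting the product decomposition $X^+(\R)\simeq X^+(\R)_\rho\times A_\rho$ from Proposition~\ref{prop:rho-decomposition} together with the fact that $A_\rho$ lies in $A_G(\R)^\circ\subset Z_G(\R)$, so that $\eta(v)$ transforms by $\omega_\pi$ under the $A_\rho$-translation. The paper compresses this to one sentence (``$\eta(v)\in C^\infty(X^+_\rho)\otimes \omega_\pi|_{A_\rho}\ell$ \ldots, the bijectivity is then evident''); what you have written is a careful unpacking of exactly that sentence.

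One small remark: the step you identify as the ``main obstacle'' --- compatibility of the factorization $\mathcal{L}^{1/2}_{X^+}\simeq \mathcal{L}^{1/2}_{X^+_\rho}\boxtimes\mathcal{L}^{1/2}_{A_\rho}$ with the $G(\R)$-action, and the absorption of a $|\det\rho|^{1/2}$ factor --- is actually automatic once you have shown (as you do) that under the diffeomorphism of Proposition~\ref{prop:rho-decomposition} the $G(\R)\simeq G(\R)_\rho\times A_\rho$-action becomes the product action on $X^+(\R)_\rho\times A_\rho$. The canonical $H$-equivariant structure on the intrinsic density bundle of a manifold (\S\ref{sec:densities}) is functorial for equivariant diffeomorphisms, and the K\"unneth isomorphism $\mathcal{L}^t_{Y_1\times Y_2}\simeq\mathcal{L}^t_{Y_1}\boxtimes\mathcal{L}^t_{Y_2}$ is equivariant for product actions; no separate tracking of a $|\det\rho|^{1/2}$ factor is needed at this level. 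Your observation that $|\det\rho|=1$ on $G(\R)_\rho$ is true but is not actually used anywhere in the argument. This is only a matter of exposition and does not affect the correctness of what you have written.
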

\begin{proof}
	Since $\eta(v)$ must transform by $\omega_\pi$ under $A_\rho$, we have $\eta(v) \in C^\infty(X^+_\rho) \otimes \omega_\pi|_{A_\rho} \ell$ with respect to $X^+(\R) \simeq X^+(\R)_\rho \times A_\rho$. The bijectivity is then evident.
\end{proof}

\subsection{The \texorpdfstring{$\gamma$}{gamma}-factor}\label{sec:gamma}

\begin{proposition}\label{prop:zeta-uniqueness}
	Consider $\lambda \in \Lambda_{\CC}$ with $\Re(\lambda) \relgeq{X} \kappa$. If $\eta \in \mathcal{N}_\pi(X^+)$ satisfies
	\[ Z_\lambda(\eta, v, \xi) = 0, \quad v \in V_\pi^{K\text{-fini}}, \; \xi \in C_c^\infty(X^+), \]
	then $\eta = 0$.
\end{proposition}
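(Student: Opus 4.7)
The plan is to show that the hypothesis forces the smooth half-density $\eta(v) \in C^\infty(X^+)$ to vanish for every $K$-finite vector $v$, and then invoke continuity together with Casselman--Wallach to conclude.

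First, I would fix $v \in V_\pi^{K\text{-fini}}$ and observe that, since $f_1, \ldots, f_r$ are the basic relative invariants defining $\partial X$, the function $|f|^\lambda = \prod_i |f_i|^{\lambda_i}$ is $C^\infty$ and nowhere vanishing on $X^+(\R)$. Thus the product $\eta(v) |f|^\lambda$ is a well-defined smooth half-density on $X^+(\R)$. The hypothesis says that
\[ \int_{X^+(\R)} \bigl(\eta(v)|f|^\lambda\bigr) \cdot \xi = 0 \quad \text{for all } \xi \in C^\infty_c(X^+). \]
Trivializing $\mathcal{L}^{1/2}$ on $X^+(\R)$ via Lemma \ref{prop:avoid-density}, this pairing becomes the usual integration of a continuous function against arbitrary compactly supported smooth test functions; the fundamental lemma of the calculus of variations then forces $\eta(v)|f|^\lambda$ to vanish identically on $X^+(\R)$. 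Since $|f|^\lambda$ has no zero on $X^+(\R)$, we conclude $\eta(v) = 0$ in $C^\infty(X^+)$.

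Next I would promote this to all of $V_\pi$. By construction $\eta: V_\pi \to C^\infty(X^+)$ is continuous, and for an SAF representation the $K$-finite part $V_\pi^{K\text{-fini}}$ is dense in $V_\pi$ (this is part of the Casselman--Wallach package recalled in \S\ref{sec:conventions}). The previous paragraph shows $\eta$ vanishes on this dense subspace, whence $\eta = 0$ by continuity.

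I do not expect any genuine obstacle here. The only point requiring a moment's care is that the test functions $\xi$ are half-densities rather than scalars, but this is dispatched by the equivariant trivialization of $\mathcal{L}^{1/2}$, after which the argument reduces to the standard fact that a continuous function annihilated by integration against all $C^\infty_c$ test functions vanishes. The hypothesis $\Re(\lambda) \relgeq{X} \kappa$ is used only to make sense of $Z_\lambda(\eta,v,\xi)$ as a convergent integral in the range guaranteed by Theorem \ref{prop:convergence}; the vanishing argument itself is pointwise on $X^+(\R)$ and insensitive to the specific $\lambda$.
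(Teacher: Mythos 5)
Your argument is correct and is essentially the paper's own proof, merely spelled out: the paper likewise notes that, in the range of convergence, vanishing of $Z_\lambda(\eta,v,\cdot)$ against all of $C^\infty_c(X^+)$ forces $\eta(v)|f|^\lambda = 0$ hence $\eta(v)=0$ for $K$-finite $v$, and then concludes by density of $V_\pi^{K\text{-fini}}$ and continuity of $\eta$. One small remark: the detour through Lemma \ref{prop:avoid-density} is optional, since the pairing of a smooth half-density against a compactly supported smooth half-density already produces a density that can be integrated, and the fundamental lemma applies directly in that setting.
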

\begin{proof}
	Since we are in the range of convergence, $Z_\lambda(\eta, v, \cdot) = 0$ on $C^\infty_c(X^+)$ implies $\eta(v) |f|^\lambda = 0$, thus $\eta(v) = 0$. Since $V_\pi^{K\text{-fini}}$ is dense in $V_\pi$, it follows that $\eta = 0$.
\end{proof}

\begin{proposition}\label{prop:zeta-restricted}
	Let $\xi \in C^\infty_c(X^+)$. Then $Z_\lambda(\eta, v, \xi)$ is given by $\int_{X^+(\R)} \eta(v) |f|^\lambda \xi$ for any $\lambda \in \Lambda_{\CC}$ off the poles.
\end{proposition}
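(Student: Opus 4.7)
The plan is to compare the meromorphic continuation $Z_\lambda(\eta,v,\xi)$ with the naive integral
\[ I_\lambda := \int_{X^+(\R)} \eta(v) |f|^\lambda \xi, \]
taking advantage of the hypothesis that $\xi$ is compactly supported inside the open orbit $X^+(\R)$.

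First, I would observe that the support $K_0 := \Supp(\xi) \subset X^+(\R)$ is compact and disjoint from $\partial X(\R)$. Hence each $f_i$ is nowhere vanishing and continuous on $K_0$, so on this compact set $|f|^\lambda$ is bounded uniformly in $\lambda$ when $\Re(\lambda)$ stays in a compact region of $\Lambda_{\R}$. Combined with the continuity of $\eta(v)$ on $X^+(\R)$ and the fact that $\xi \in C_c^\infty(X^+)$, this shows that $I_\lambda$ converges absolutely for every $\lambda \in \Lambda_{\CC}$, and that the resulting function $\lambda \mapsto I_\lambda$ is entire on $\Lambda_{\CC}$ (differentiation under the integral sign is legitimate, since $\partial_{\lambda_i} |f_i|^{\lambda_i} = \log|f_i| \cdot |f_i|^{\lambda_i}$ stays bounded on $K_0$ locally uniformly in $\lambda$).

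Next, take $L(\eta, \lambda)$ as in Theorem \ref{prop:meromorphy}. By construction, the function $\lambda \mapsto L(\eta, \lambda) Z_\lambda(\eta, v, \xi)$ is holomorphic on $\Lambda_{\CC}$, and it agrees with $L(\eta, \lambda) I_\lambda$ in the range of convergence $\Re(\lambda) \relgeq{X} \kappa$, because there $Z_\lambda(\eta, v, \xi)$ coincides with $I_\lambda$ by Definition \ref{def:zeta}. Since both functions are holomorphic on the connected complex manifold $\Lambda_{\CC}$ and agree on a non-empty open subset, the identity principle gives
\[ L(\eta, \lambda) Z_\lambda(\eta, v, \xi) = L(\eta, \lambda) I_\lambda \qquad \text{for all } \lambda \in \Lambda_{\CC}. \]

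Finally, since $L(\eta, \lambda)$ is not identically zero, dividing by $L(\eta, \lambda)$ yields $Z_\lambda(\eta, v, \xi) = I_\lambda$ wherever $L(\eta, \lambda) \neq 0$, and in particular off the poles of $Z_\lambda$ (which are contained in the zero locus of $L(\eta, \lambda)$). There is no serious obstacle here; the only subtlety is ensuring the identity principle applies, which is immediate once the entire character of $I_\lambda$ has been verified from the compactness of $\Supp(\xi)$ inside $X^+(\R)$.
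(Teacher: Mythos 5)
Your proof is correct and takes the same route as the paper: agreement in the range of convergence $\Re(\lambda)\relgeq{X}\kappa$, then meromorphic continuation (the paper's proof is just a two-line statement of this). Your write-up simply spells out why the naive integral $I_\lambda$ is entire (compact support away from $\partial X$) and why the identity principle applies after multiplying by $L(\eta,\lambda)$, which are exactly the details the paper leaves implicit.
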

\begin{proof}
	Evident when $\Re(\lambda) \relgeq{X} \kappa$. The general case follows by meromorphic continuation.
\end{proof}

Before proving the next result, recall that $C^\infty(X^+)$ and $\Schw(X)$ are both smooth as $G(\R)$-representations; the action of $G(\R)$ (resp.\ $\mathfrak{g}$) on them are denoted as $\xi \mapsto g \cdot \xi$ (resp.\ $\xi \mapsto H \cdot \xi$). The $\mathfrak{g}$-action here is derived from the $G(\R)$-action. It differs from the one derived from $\mathfrak{g} \subset U(\mathfrak{g}) \to \mathcal{D}_X$ together with \eqref{eqn:derivation-Schwartz}, because $|\Omega|^{1/2}$ is not $G(\R)$-invariant.

Observe that for smooth $G(\R)$-representations $\pi_1$, $\pi_2$ and a jointly continuous $G(\R)$-invariant bilinear form $B: V_{\pi_1} \times V_{\pi_2} \to \CC$, we have
\begin{equation}\label{eqn:invariant-bilinear-derivative}
	B(\pi_1(H)v_1, v_2) + B(v_1, \pi_2(H)v_2) = 0, \quad H \in \mathfrak{g},\; v_1 \in V_{\pi_1}, \; v_2 \in V_{\pi_2}.
\end{equation}
The argument for \eqref{eqn:invariant-bilinear-derivative} is well-known: simply compute the derivative at $t=0$ of
\[ B\left( \pi_1(\exp(tH)) v_1, \pi_2(\exp(tH)) v_2 \right) = B(v_1, v_2) \quad (t \in \R) \]
using the joint continuity of $B$ and smoothness of $\pi_1$, $\pi_2$.

\begin{lemma}\label{prop:weak-LFE-aux}
	Let $\check{\eta} \in \mathcal{N}_\pi(\check{X}^+)$. Choose a ``denominator'' $L(\check{\eta}, \lambda)$ as in Theorem \ref{prop:meromorphy} and consider the variant $L\check{Z}_\lambda(\check{\eta}, \cdot, \cdot)$ of zeta integrals on $\check{X}$. Fix $\lambda \in \Lambda_{\CC}$ and set $\pi_\lambda := \pi \otimes |\omega|^\lambda$.
	\begin{enumerate}[(i)]
		\item For all $v \in V_\pi^{K\text{-fini}}$, there exists a unique $T_\lambda(v) \in C^\infty(X^+)$ such that
		\[ L\check{Z}_\lambda(\check{\eta}, v, \mathcal{F}\xi) = \int_{X^+(\R)} T_\lambda(v) \xi, \quad \xi \in C^\infty_c(X^+). \]
		\item $v \mapsto T_\lambda(v)$ extends to an element of $\mathcal{N}_{\pi_\lambda}(X^+)$.
	\end{enumerate}
\end{lemma}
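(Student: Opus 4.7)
The strategy is to realize $T_\lambda(v)$ first as a tempered distribution on $X(\R)$ via Fourier duality, then upgrade it to a $C^\infty$ half-density on $X^+(\R)$ by elliptic regularity, and finally extend $v\mapsto T_\lambda(v)$ from the Harish-Chandra module $V_\pi^{K\text{-fini}}$ to all of $V_\pi$ by automatic continuity.

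For $v \in V_\pi^{K\text{-fini}}$ I would set
\[ T_\lambda(v)(\xi) := L\check{Z}_\lambda(\check{\eta}, v, \mathcal{F}\xi), \quad \xi \in \Schw(X), \]
which defines an element of $\Schw(X)^\vee$ by the continuity of $\mathcal{F}: \Schw(X) \to \Schw(\check{X})$ together with Theorem \ref{prop:meromorphy}. The $G(\R)$-invariance of $L\check{Z}_\lambda(\check{\eta}, \cdot, \cdot)$ on $(\pi \otimes |\omega|^\lambda) \times \Schw(\check{X})$ (Theorem \ref{prop:meromorphy}(ii)), combined with the $G(\R)$-equivariance of $\mathcal{F}$, yields the equivariance identity
\[ T_\lambda(\pi_\lambda(g)v) = g \cdot T_\lambda(v) \quad \text{in}\; \Schw(X)^\vee, \quad g \in G(\R). \]
Differentiating in $g$ via \eqref{eqn:invariant-bilinear-derivative} transfers this to the level of $\mathfrak{g}$. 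Since $v$ is both $K$-finite and $\mathcal{Z}(\mathfrak{g})$-finite (the latter because $\pi$ is SAF with central character), the tempered distribution $T_\lambda(v)$ is likewise $K$-finite and $\mathcal{Z}(\mathfrak{g})$-finite.

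Restrict $T_\lambda(v)$ to the open set $X^+(\R)$, a finite union of $G(\R)$-orbits. On each such orbit $O$, the images of $\mathcal{Z}(\mathfrak{g})$ together with the Casimir of $K$ generate $G(\R)$-invariant analytic differential operators whose combination is elliptic; the joint $K$- and $\mathcal{Z}(\mathfrak{g})$-finiteness of $T_\lambda(v)|_O$ thus produces a finite-codimensional left ideal of annihilators containing an elliptic element. By elliptic regularity (in the spirit of Harish-Chandra's regularity theorem for invariant eigendistributions), $T_\lambda(v)|_{X^+(\R)}$ is a $C^\infty$ half-density, which becomes the required $T_\lambda(v) \in C^\infty(X^+)$. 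The integral formula in (i) is then tautological on $C^\infty_c(X^+) \subset \Schw(X)$ via Proposition \ref{prop:zeta-restricted}, and uniqueness is immediate because $C^\infty_c(X^+)$ separates $C^\infty(X^+)$.

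For assertion (ii), the equivariance identity above now reads inside $C^\infty(X^+)$, showing that $v \mapsto T_\lambda(v)$ is a $(\mathfrak{g}, K)$-module homomorphism from $V_\pi^{K\text{-fini}}$ (with its $\pi_\lambda$-structure) into $C^\infty(X^+)^{K\text{-fini}}$. Its image is contained in a finitely generated admissible $(\mathfrak{g}, K)$-submodule, whose SAF globalization embeds continuously and $G(\R)$-equivariantly into $C^\infty(X^+)$. The Casselman--Wallach automatic continuity theorem (\cite{BK14}) then yields a unique continuous $G(\R)$-equivariant extension $V_\pi \to C^\infty(X^+)$ intertwining $\pi_\lambda$, namely an element of $\mathcal{N}_{\pi_\lambda}(X^+)$. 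The main obstacle is pinpointing the correct form of elliptic regularity on each $G(\R)$-orbit of $X^+(\R)$ and the automatic continuity step with target $C^\infty(X^+)$; both are classical and are implicit in the finiteness and moderate-growth inputs already drawn from \cite{KKS17, Li19}.
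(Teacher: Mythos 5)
Your proposal follows essentially the same route as the paper: define the tempered distribution $\xi\mapsto L\check{Z}_\lambda(\check{\eta},v,\mathcal{F}\xi)$, deduce $K$- and $\mathcal{Z}(\mathfrak{g})$-finiteness from the $G(\R)$-equivariance of $\mathcal{F}$ and the invariance and joint continuity of $L\check{Z}_\lambda$, upgrade to smoothness on $X^+(\R)$ by elliptic regularity, then extend from the Harish-Chandra module by Casselman--Wallach automatic continuity. The one ``obstacle'' you flag is exactly what the paper resolves by invoking $K$-admissibility and the resulting moderate growth at infinity (\cite[Theorem 9.5, Proposition 9.7]{Li19}), which is the precise hypothesis needed to apply \cite[Example 11.1(b), Proposition 11.2]{BK14}; also note that $\mathcal{Z}(\mathfrak{g})$-finiteness of $K$-finite vectors follows from admissibility of $\pi$ rather than from the central-character hypothesis.
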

\begin{proof}
	Let $v \in V_\pi^{K\text{-fini}}$ $g \in K$ and $H \in \mathfrak{g}$. Since $\mathcal{F}: \Schw(X) \to \Schw(\check{X})$ intertwines smooth $G(\R)$-representations and $L\check{Z}_\lambda(\check{\eta}, \cdot, \cdot)$ is $G(\R)$-invariant and jointly continuous on $\pi_\lambda \times \Schw(X)$ by Theorem \ref{prop:meromorphy} (ii), we have
	\begin{equation}\label{eqn:weak-LFE-aux}\begin{aligned}
		L\check{Z}_\lambda\left( \check{\eta}, v, \mathcal{F} (g \cdot \xi)\right) & = L\check{Z}_\lambda\left( \check{\eta}, v, g \cdot (\mathcal{F} \xi)\right) \\
		& = L\check{Z}_\lambda\left( \check{\eta}, \pi_\lambda(g^{-1}) v, \mathcal{F} \xi \right), \\
		L\check{Z}_\lambda\left( \check{\eta}, v, \mathcal{F} (H \cdot \xi)\right) & = L\check{Z}_\lambda\left( \check{\eta}, v, H \cdot (\mathcal{F} \xi)\right) \\
		& = - L\check{Z}_\lambda\left( \check{\eta}, \pi_\lambda(H) v, \mathcal{F} \xi \right) \quad \because\;\text{\eqref{eqn:invariant-bilinear-derivative}},
	\end{aligned}\end{equation}
	for all $\xi \in \Schw(X)$.

	Since $v$ is finite under $\mathcal{Z}(\mathfrak{g})$ and $K$ with respect to $\pi_\lambda$, the distribution $C_c^\infty(X^+) \ni \xi \mapsto L\check{Z}_\lambda(\check{\eta}, v, \mathcal{F}\xi)$ is also finite under $\mathcal{Z}(\mathfrak{g})$ and $K$ by \eqref{eqn:weak-LFE-aux}. The same holds if we choose $\Omega$ and consider the linear functional $\xi_0 \mapsto L\check{Z}_\lambda(\check{\eta}, v, \mathcal{F}(\xi_0 |\Omega|^{1/2}))$ on $\Schw_0(X)$, since the $G(\R)$-actions on $\xi_0$ and $\xi_0 |\Omega|^{1/2}$ only differ by a character.

	It is then a well-known consequence of the elliptic regularity theorem that our distribution is represented by a unique $T_\lambda(v) \in C^\infty(X^+)$: a detailed explanation can be found in \cite[Proposition 9.7]{Li19}. In fact, $T_\lambda(v)$ is $K$-admissible in the sense of \textit{loc.\ cit.}; see also the Example 2.4 therein.
	
	The $K$-admissibility of the distribution $T_\lambda(v)$, or more generally, of $\mathcal{D}_{X^+_{\CC}}$-module $\mathcal{M}$ it generates, actually implies that $T_\lambda(v)$ is of \emph{moderate growth at infinity}; see \cite[Theorem 9.5]{Li19}.
	
	Now vary $v$. It is clear that $v \mapsto T_\lambda(v)$ is linear, and for all $\xi \in C^\infty_c(X^+)$ we have
	\begin{align*}
		\int_{X^+(\R)} T_\lambda\left( \pi_\lambda(g)v \right) \xi & = L\check{Z}_\lambda \left( \check{\eta}, \pi_\lambda(g)v, \mathcal{F}\xi \right) \stackrel{\eqref{eqn:weak-LFE-aux}}{=} L\check{Z}_\lambda \left( \check{\eta}, v, \mathcal{F}(g^{-1} \cdot \xi) \right) \\
		& = \int T_\lambda(v) (g^{-1} \cdot \xi) = \int \left( g \cdot T_\lambda(v) \right) \xi , \\
		\int_{X^+(\R)} T_\lambda\left( \pi_\lambda(H)v \right) \xi & = L\check{Z}_\lambda \left( \check{\eta}, \pi_\lambda(H)v, \mathcal{F}\xi \right) \stackrel{\eqref{eqn:weak-LFE-aux}}{=} - L\check{Z}_\lambda \left( \check{\eta}, v, \mathcal{F}(H \cdot \xi) \right) \\
		& = - \int T_\lambda(v) (H \cdot \xi) \stackrel{\eqref{eqn:invariant-bilinear-derivative}}{=} \int \left( H \cdot T_\lambda(v) \right) \xi,
	\end{align*}
	where we used the fact that $\int: C^\infty(X^+) \times C^\infty_c(X^+) \to \CC$ is $G(\R)$-invariant and jointly continuous. Indeed, invariance follows by change of variables, whilst the joint continuity is easily checked by restricting to $C^\infty(X^+) \times C^\infty_\Omega(X^+)$ and recalling the topologies from \S\S\ref{sec:Fourier}---\ref{sec:coeff-rep}, where $\Omega \subset X^+(\R)$ is any compact subset.
	
	As $\xi$ is arbitrary, we deduce
	\[ T_\lambda\left( \pi_\lambda(g) v \right) = g \cdot T_\lambda(v), \quad T_\lambda\left( \pi_\lambda(H) v \right) = H \cdot T_\lambda(v). \]
	
	Summing up, $T_\lambda: V_{\pi_\lambda}^{K\text{-fini}} \to C^\infty(X^+)^{K\text{-fini}}$ is a map of $(\mathfrak{g}, K)$-modules. We claim that $T_\lambda$ extends to an element of $\mathcal{N}_{\pi_\lambda}(X^+)$. Indeed, this would follow from \cite[Example 11.1 (b) and Proposition 11.2]{BK14} provided that $T_\lambda(v)$ is of \emph{moderate growth} on $X^+(\R)$ for every $v \in V_\pi^{K\text{-fini}}$. To reconcile the aforementioned moderate growth at infinity in \cite{Li19} with that in \cite{BK14}, see the proof of Proposition \ref{prop:soft-bound}.
\end{proof}

\begin{proposition}[Weak functional equation]\label{prop:weak-LFE}
	Let $\pi$ be an SAF representation of $G(\R)$ with central character. There exists a unique meromorphic map $\gamma(\pi, \lambda): \mathcal{N}_\pi(\check{X}^+) \to \mathcal{N}_\pi(X^+)$ (i.e.\ its matrix entries are meromorphic in $\lambda \in \Lambda_{\CC}$), such that for all $\check{\eta} \in \mathcal{N}_\pi(\check{X}^+)$ and all $v \in V_\pi$, we have
	\[ \check{Z}_\lambda\left( \check{\eta}, v, \mathcal{F}\xi \right) = Z_\lambda\left( \gamma(\lambda, \pi)(\check{\eta}), v, \xi \right), \quad \xi \in C^\infty_c(X^+), \]
	for all $\lambda \in \Lambda_{\CC}$ off the poles. Moreover:
	\begin{enumerate}[(i)]
		\item $\gamma(\pi, \lambda)$ is unique: if $\gamma_1(\pi, \lambda)$, $\gamma_2(\pi, \lambda)$ satisfy
		\[ Z_\lambda\left( \gamma_1(\lambda, \pi)(\check{\eta}), v, \xi \right) = Z_\lambda\left( \gamma_2(\lambda, \pi)(\check{\eta}), v, \xi \right) \]
		for all $\check{\eta}, v, \xi$ and all $\lambda$ in an open subset $U \neq \emptyset$ in $\Lambda_{\CC}$, then $\gamma_1(\pi, \lambda) = \gamma_2(\pi, \lambda)$ as meromorphic families in $\lambda$;
		\item if $L(\check{\eta}, \lambda)$ is as in Theorem \ref{prop:meromorphy}, then $L(\check{\eta}, \lambda) \gamma(\pi, \lambda)$ is holomorphic in $\lambda$;
		\item $\left( \gamma(\pi, \lambda + \mu)(\check{\eta}) \right)_\lambda = \gamma(\pi \otimes |\omega|^\lambda, \mu)(\check{\eta}_\lambda)$ for all $\mu, \lambda, \check{\eta}$.
	\end{enumerate}
\end{proposition}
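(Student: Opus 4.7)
The plan is to construct $\gamma(\pi,\lambda)$ directly from the family $T_\lambda \in \mathcal{N}_{\pi_\lambda}(X^+)$ supplied by Lemma~\ref{prop:weak-LFE-aux}, via the formula
\[ \gamma(\pi,\lambda)(\check{\eta})(v) \; := \; \frac{T_\lambda(v)\,|f|^{-\lambda}}{L(\check{\eta},\lambda)}, \quad v \in V_\pi. \]
Since $|f|^{-\lambda}$ transforms under $G(\R)$ by the character $|\omega|^{-\lambda}$, pointwise multiplication by $|f|^{-\lambda}$ intertwines $\mathcal{N}_{\pi_\lambda}(X^+)$ with $\mathcal{N}_\pi(X^+)$, so $\gamma(\pi,\lambda)(\check{\eta}) \in \mathcal{N}_\pi(X^+)$. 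For any $\xi \in C^\infty_c(X^+)$ the product $|f|^{-\lambda}\xi$ again lies in $C^\infty_c(X^+)$ (as $|f|$ is bounded away from $0$ on $\Supp(\xi)$), so Proposition~\ref{prop:zeta-restricted} together with the defining property of $T_\lambda$ gives
\[ Z_\lambda\!\left(\gamma(\pi,\lambda)(\check{\eta}),v,\xi\right) \;=\; \frac{1}{L(\check{\eta},\lambda)}\int_{X^+(\R)} T_\lambda(v)\,\xi \;=\; \check{Z}_\lambda(\check{\eta},v,\mathcal{F}\xi), \]
first for $v \in V_\pi^{K\text{-fini}}$ and then for all $v \in V_\pi$ by joint continuity and the density of $V_\pi^{K\text{-fini}}$ in $V_\pi$.

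For uniqueness (i), given two candidates $\gamma_1,\gamma_2$ agreeing on an open $U \neq \emptyset$, the scalar function $\lambda \mapsto Z_\lambda\!\left((\gamma_1-\gamma_2)(\lambda)(\check{\eta}),v,\xi\right)$ is meromorphic and vanishes on $U$, hence identically. Picking $\lambda_0$ with $\Re(\lambda_0) \relgeq{X} \kappa$ off the poles of $\gamma_1-\gamma_2$, Proposition~\ref{prop:zeta-uniqueness} applied at $\lambda_0$ forces $(\gamma_1-\gamma_2)(\lambda_0)=0$; the meromorphic family must then vanish identically.

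Item (ii) is the principal obstacle: I must show that $\lambda \mapsto L(\check{\eta},\lambda)\gamma(\pi,\lambda)(\check{\eta}) = T_\lambda|f|^{-\lambda}$ is holomorphic as a map $\Lambda_{\CC} \to \mathcal{N}_\pi(X^+)$. The decomposition $X^+(\R) \simeq X^+(\R)_\rho \times A_\rho$ of Proposition~\ref{prop:rho-decomposition} and the restriction isomorphism of Proposition~\ref{prop:N-restriction} are tailored precisely for this: on $X^+(\R)_\rho$ one has $|f|^{-\lambda} \equiv 1$, so the restriction of $T_\lambda|f|^{-\lambda}$ coincides with that of $T_\lambda$, removing the $\lambda$-dependence coming from the twist; holomorphy in $\mathcal{N}_\pi(X^+) \simeq \Hom_{G(\R)_\rho}(\pi, C^\infty(X^+_\rho))$ is then checked by pairing with $\xi' \in C^\infty_c(X^+_\rho)$ extended to $\xi \in C^\infty_c(X^+)$ through a fixed bump on $A_\rho$, which recasts the pairing as $L\check{Z}_\lambda(\check{\eta},v,\mathcal{F}\xi)$, holomorphic in $\lambda$ by Theorem~\ref{prop:meromorphy}. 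An alternative is to apply Lemma~\ref{prop:holomorphy-test} on $X^+$ directly: for fixed $\xi \in C^\infty_c(X^+)$ the family $|f|^{-\mu}\xi \in C^\infty_c(X^+)$ is entire in $\mu$, so $F(\lambda,\mu) := L\check{Z}_\lambda(\check{\eta},v,\mathcal{F}(|f|^{-\mu}\xi))$ is separately, and hence by Hartogs jointly, holomorphic, whence the diagonal $F(\lambda,\lambda) = \int T_\lambda(v)|f|^{-\lambda}\xi$ is holomorphic in $\lambda$.

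Finally, (iii) is a formal consequence of (i) applied to the twisted representation $\pi \otimes |\omega|^\lambda$. By Corollary~\ref{prop:zeta-translation}(i),
\[ Z_\mu\!\left((\gamma(\pi,\lambda+\mu)(\check{\eta}))_\lambda, v, \xi\right) \;=\; Z_{\mu+\lambda}(\gamma(\pi,\lambda+\mu)(\check{\eta}),v,\xi) \;=\; \check{Z}_{\mu+\lambda}(\check{\eta},v,\mathcal{F}\xi) \;=\; \check{Z}_\mu(\check{\eta}_\lambda,v,\mathcal{F}\xi), \]
and the weak functional equation for $\pi \otimes |\omega|^\lambda$ rewrites the last term as $Z_\mu\!\left(\gamma(\pi\otimes|\omega|^\lambda,\mu)(\check{\eta}_\lambda),v,\xi\right)$; the uniqueness statement (i) for $\pi \otimes |\omega|^\lambda$ then identifies the two sides as meromorphic families in $\mu$.
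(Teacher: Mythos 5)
Your proposal is correct and its skeleton---define $L\gamma(\pi,\lambda)(\check{\eta}) = T_\lambda(\cdot)|f|^{-\lambda}$ from Lemma~\ref{prop:weak-LFE-aux}, divide by $L(\check{\eta},\lambda)$, then verify (i)--(iii)---is the paper's. The point worth highlighting is your alternative argument for item (ii). The paper establishes holomorphy of $\lambda \mapsto T_\lambda(\cdot)|f|^{-\lambda}$ by decomposing $X^+(\R) \simeq X^+(\R)_\rho \times A_\rho$ (Propositions~\ref{prop:rho-decomposition}, \ref{prop:N-restriction}), writing the pairing as a product $\int_{X^+(\R)_\rho} U_\lambda(v)\xi_1 \cdot \int_{A_\rho} \omega_\pi |\omega|^\lambda \ell \,\xi_2$, and dividing through by the $A_\rho$-factor locally near each $\lambda_\circ$ after choosing $\xi_2$ with nonvanishing $A_\rho$-integral. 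Your sketch of this route omits that division step (which is the technical crux of the paper's version), but your Hartogs alternative is a genuine simplification: doubling the variable to $F(\lambda,\mu) = L\check{Z}_\lambda(\check{\eta},v,\mathcal{F}(|f|^{-\mu}\xi))$, one gets separate holomorphy in $\lambda$ (Theorem~\ref{prop:meromorphy}) and in $\mu$ (the family $|f|^{-\mu}\xi$ is entire $\Lambda_{\CC} \to C^\infty_c(X^+)$), hence joint holomorphy by Hartogs, and the diagonal $F(\lambda,\lambda) = \int T_\lambda(v)|f|^{-\lambda}\xi$ feeds directly into Lemma~\ref{prop:holomorphy-test}. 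This sidesteps the explicit decomposition entirely. Two small remarks: for (i), after concluding that $Z_\lambda((\gamma_1-\gamma_2)(\check{\eta}),v,\xi)$ vanishes identically, Proposition~\ref{prop:zeta-uniqueness} should be applied at every $\lambda_0$ ranging over a nonempty open set in the convergence range (a single $\lambda_0$ does not force a matrix of meromorphic functions to vanish); the paper instead uses Proposition~\ref{prop:zeta-restricted} directly on $U$, avoiding any detour into the convergence range. Also note linearity of $\check{\eta} \mapsto \gamma(\pi,\lambda)(\check{\eta})$ is not automatic from the formula (both $T_\lambda$ and $L$ depend on $\check{\eta}$), but follows from the characterization via Proposition~\ref{prop:zeta-uniqueness}, since the right-hand side $\check{Z}_\lambda(\check{\eta},v,\mathcal{F}\xi)$ is linear in $\check{\eta}$.
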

\begin{proof}
	Write $\pi_\lambda := \pi \otimes |\omega|^\lambda$ as before. Let $\check{\eta} \in \mathcal{N}_\pi(\check{X}^+)$ and choose a ``denominator'' $L(\check{\eta}, \lambda)$ as in Lemma \ref{prop:weak-LFE-aux} to obtain the family $T_\lambda \in \mathcal{N}_{\pi_\lambda}(X^+)$ in $\lambda \in \Lambda_{\CC}$. Define
	\[ L\gamma(\lambda, \pi)(\check{\eta}) := T_\lambda(\cdot) |f|^{-\lambda} \; \in \mathcal{N}_\pi(X^+). \]
	We contend that $L\gamma(\lambda, \pi)(\check{\eta})$ is a holomorphic family inside $\mathcal{N}_\pi(X^+)$. Lemma \ref{prop:weak-LFE-aux} implies
	\[ L\check{Z}_\lambda(\check{\eta}, v, \mathcal{F}\xi) = \int_{X^+(\R)} L\gamma(\lambda, \pi)(\check{\eta})(v) \cdot |f|^\lambda \xi, \quad v \in V_\pi^{K\text{-fini}}. \]
	The left hand side being holomorphic in $\lambda$ (while $v, \xi$ are kept fixed), our strategy is to repeat the arguments for Lemma \ref{prop:holomorphy-test} to prove our claim. The problem, however, is the presence of $|f|^\lambda$ in the integrand. The workaround is to use the decomposition $X^+(\R) \simeq X^+(\R)_\rho \times A_\rho$ in Proposition \ref{prop:rho-decomposition}. Let
	\begin{compactitem}
		\item $\xi = \xi_1 \otimes \xi_2$ with $\xi_1 \in C^\infty_c(X^+_\rho)$ and $\xi_2 \in C^\infty_c(A_\rho)$;
		\item $\ell$: an invariant, nonzero half-density on $A_\rho$;
		\item $L\gamma(\gamma, \pi)(\check{\eta})(v) = U_\lambda(v) \otimes \omega_\pi \ell$, where $U_\lambda \in \Hom_{G(\R)_\rho}\left(\pi,  C^\infty(X^+_\rho)\right)$.
	\end{compactitem}
	Such a decomposition of $L\gamma(\gamma, \pi)(\check{\eta})(v)$ exists and is unique (Proposition \ref{prop:N-restriction}). We have
	\[ \int_{X^+(\R)} L\gamma(\lambda, \pi)(\check{\eta})(v) \cdot |f|^\lambda \xi = \int_{X^+(\R)_\rho} U_\lambda(v) \xi_1 \cdot \int_{A_\rho} \omega_\pi |\omega|^\lambda \ell \xi_2. \]
	
	The integral $\int_{A_\rho}$ is holomorphic in $\lambda$. For every given $\lambda_\circ \in \Lambda_{\CC}$, we may choose $\xi_2$ such that $\int_{A_\rho} \omega_\pi |\omega|^{\lambda_\circ} \xi_2 \neq 0$, and the non-vanishing propagates to some neighborhood $\mathcal{U}$ of $\lambda_\circ$.
	
	It follows that $\int_{X^+(\R)_\rho} U_\lambda(v) \xi_1$ is holomorphic in $\lambda$ over $\mathcal{U}$, for all $\xi_1 \in C^\infty_c(X^+_\rho)$ and $v \in V_\pi^{K\text{-fini}}$. Hence the arguments for (ii) $\implies$ (i) in Lemma \ref{prop:holomorphy-test} show that $U_\lambda$ is a holomorphic family inside $\Hom_{G(\R)_\rho}(\pi, C^\infty(X^+_\rho))$. Our claim on the holomorphy of $L\gamma(\lambda, \pi)(\check{\eta})$ inside $\mathcal{N}_\pi$ follows from Proposition \ref{prop:N-restriction}.

	Next, consider the meromorphic family in $\lambda \in \Lambda_{\CC}$:
	\[ \gamma(\lambda, \pi)(\check{\eta}) := \frac{L\gamma(\lambda, \pi)(\check{\eta})}{L(\check{\eta}, \lambda)}, \quad \check{\eta} \in \mathcal{N}_\pi(\check{X}^+). \]
	It satisfies $\check{Z}_\lambda \left(\check{\eta}, v, \mathcal{F}\xi \right) = Z_\lambda\left( \gamma(\lambda, \pi)(\check{\eta}), v, \xi \right)$ for all $\xi \in C^\infty_c(X^+)$ and $v \in V_\pi^{K\text{-fini}}$. The equality extends to all $v \in V_\pi$ by continuity.

	Consider the assertion (i). We may assume that $U$ is disjoint from the singularities of $Z_\lambda$. Proposition \ref{prop:zeta-restricted} implies $\gamma_1(\lambda, \pi) = \gamma_2(\lambda, \pi)$ for all $\lambda \in U$, hence determines $\gamma(\lambda, \pi)$ as a meromorphic family in $\lambda \in \Lambda_{\CC}$.

	Assertion (ii) follows from the construction of $\gamma(\pi, \lambda)$. As for (iii), notice that
	\begin{align*}
		\check{Z}_{\lambda + \mu}\left(\check{\eta}, v, \mathcal{F}\xi \right) & = \check{Z}_\mu\left( \check{\eta}_\lambda, v, \mathcal{F}\xi \right) \quad (\because\;\text{Corollary \ref{prop:zeta-translation}}) \\
		& = Z_\mu \left( \gamma(\mu, \pi_\lambda)(\check{\eta}_\lambda), v, \xi \right)
	\end{align*}
	for all $\xi \in C^\infty_c(X^+)$. On the other hand,
	\begin{align*}
		\check{Z}_{\lambda + \mu}\left(\check{\eta}, v, \mathcal{F}\xi \right) & = Z_{\lambda + \mu}\left( \gamma(\pi, \lambda + \mu)(\check{\eta}), v, \xi \right) \\
		& = Z_\mu\left( \left( \gamma(\pi, \lambda + \mu)(\check{\eta}) \right)_\lambda, v, \xi \right) \quad (\because\;\text{Corollary \ref{prop:zeta-translation}}).
	\end{align*}
	When $\Re(\mu) \relgeq{X} \kappa$ and $\mu$ lies off the poles, (iii) follows by applying Proposition \ref{prop:zeta-uniqueness}. The general case of (iii) follows by meromorphic continuation.
\end{proof}

\begin{remark}
	The uniqueness of $\gamma(\pi, \lambda)$ in the weak functional equation has been established in \cite[\S 4.5]{LiLNM} by the same reasoning. The proof above can also be applied in the general setting in \textit{loc.\ cit.} to furnish a $\gamma$-factor together with a weak functional equation, provided that the axioms thereof are satisfied. Since the framework in \textit{loc.\ cit.} is largely conjectural, we confine ourselves to the case of prehomogeneous vector spaces here.
\end{remark}

We close this subsection by the compatibility between $\gamma$-factors and intertwining operators.
\begin{proposition}\label{prop:gamma-vs-intertwining}
	Let $\varphi: \pi \to \sigma$ be a morphism between SAF representations of $G(\R)$. Define $\varphi^*: \mathcal{N}_\sigma(X^+) \to \mathcal{N}_\pi(X^+)$ by $\eta \mapsto \eta \circ \varphi$, and similarly for $\check{X}^+$.
	\begin{enumerate}[(i)]
		\item For all $\eta \in \mathcal{N}_\sigma(X^+)$, $v \in V_\pi$ and $\xi \in \Schw(X)$, we have $Z_\lambda\left( \eta, \varphi(v), \xi \right) = Z_\lambda\left( \varphi^* \eta, v, \xi \right)$.
		\item Suppose that $\pi, \sigma$ have central characters. Then $\gamma(\lambda, \pi) \circ \varphi^* = \varphi^* \circ \gamma(\lambda, \sigma)$.
	\end{enumerate}
\end{proposition}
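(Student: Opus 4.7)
The plan is to prove (i) by direct unwinding of definitions and to deduce (ii) from (i) together with the uniqueness part of Proposition \ref{prop:weak-LFE}.

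For (i), observe that by construction $(\varphi^*\eta)(v) = \eta(\varphi(v))$ as elements of $C^\infty(X^+)$, so for $\lambda$ in the range of convergence $\Re(\lambda) \relgeq{X} \kappa$ one has, directly from Definition \ref{def:zeta},
\[
Z_\lambda(\eta, \varphi(v), \xi) = \int_{X^+(\R)} \eta(\varphi(v))\, |f|^\lambda\, \xi = \int_{X^+(\R)} (\varphi^*\eta)(v)\, |f|^\lambda\, \xi = Z_\lambda(\varphi^*\eta, v, \xi).
\]
The identity then extends to all $\lambda \in \Lambda_{\CC}$ off the poles by meromorphic continuation (Theorem \ref{prop:meromorphy}), applied to both sides with the constant $\kappa$ chosen uniformly for the finite-dimensional spaces $\mathcal{N}_\pi(X^+)$, $\mathcal{N}_\sigma(X^+)$.

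For (ii), assume $\pi$ and $\sigma$ have central characters; then so does $\varphi^*$ respect them, and the $\gamma$-factors of Proposition \ref{prop:weak-LFE} are defined on both sides. Let $\check\eta \in \mathcal{N}_\sigma(\check{X}^+)$, $v \in V_\pi$, $\xi \in C_c^\infty(X^+)$. The weak functional equation for $\sigma$ gives
\[
\check{Z}_\lambda(\check\eta, \varphi(v), \mathcal{F}\xi) = Z_\lambda\bigl(\gamma(\lambda,\sigma)(\check\eta), \varphi(v), \xi\bigr),
\]
and applying (i) (for both the dual and the original side; note (i) holds verbatim for $\check{Z}_\lambda$ by symmetry) this becomes
\[
\check{Z}_\lambda(\varphi^*\check\eta, v, \mathcal{F}\xi) = Z_\lambda\bigl(\varphi^*\gamma(\lambda,\sigma)(\check\eta), v, \xi\bigr).
\]
On the other hand, the weak functional equation for $\pi$ applied to $\varphi^*\check\eta \in \mathcal{N}_\pi(\check{X}^+)$ reads
\[
\check{Z}_\lambda(\varphi^*\check\eta, v, \mathcal{F}\xi) = Z_\lambda\bigl(\gamma(\lambda,\pi)(\varphi^*\check\eta), v, \xi\bigr).
\]

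Comparing the last two displays, the families $\varphi^* \circ \gamma(\lambda,\sigma)$ and $\gamma(\lambda,\pi) \circ \varphi^*$, viewed as meromorphic maps $\mathcal{N}_\sigma(\check{X}^+) \to \mathcal{N}_\pi(X^+)$, both satisfy the characterizing identity of Proposition \ref{prop:weak-LFE} for $\pi$ (after applying $\varphi^*$ to one side). The uniqueness clause (i) of that proposition then forces $\gamma(\lambda,\pi) \circ \varphi^* = \varphi^* \circ \gamma(\lambda,\sigma)$ as meromorphic families in $\lambda \in \Lambda_{\CC}$. No obstacle of substance arises: the statement is a pure naturality consequence of the uniqueness of $\gamma$-factors, and the only mild care required is to verify that the uniqueness in Proposition \ref{prop:weak-LFE} (i) is applied on the image of $\varphi^*$, which is a subspace of the finite-dimensional $\mathcal{N}_\pi(\check{X}^+)$ so causes no difficulty.
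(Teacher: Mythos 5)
Your proof is correct and takes essentially the same route as the paper. The only caveat is the citation at the end of (ii): the uniqueness clause Proposition~\ref{prop:weak-LFE}(i) is stated for meromorphic families $\mathcal{N}_\pi(\check{X}^+) \to \mathcal{N}_\pi(X^+)$, whereas you are comparing two families $\mathcal{N}_\sigma(\check{X}^+) \to \mathcal{N}_\pi(X^+)$; the cleaner reference is Proposition~\ref{prop:zeta-uniqueness} applied to the difference $\gamma(\lambda,\pi)(\varphi^*\check\eta) - \varphi^*\gamma(\lambda,\sigma)(\check\eta) \in \mathcal{N}_\pi(X^+)$ for each fixed $\check\eta$, which is exactly what the paper does (and is also what underlies the uniqueness in Proposition~\ref{prop:weak-LFE}(i)). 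Your remark about ``the image of $\varphi^*$'' is a little off target since $\varphi^*\circ\gamma(\lambda,\sigma)$ need not factor through that image, but this does not affect the validity of the argument once one invokes Proposition~\ref{prop:zeta-uniqueness} directly.
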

\begin{proof}
	Assertion (i) is clear in the range of convergence; the general case follows by meromorphic continuation. As for (ii), it suffices to observe that by (i),
	\begin{multline*}
		Z_\lambda\left( \gamma(\lambda, \pi) \varphi^* \check{\eta}, v, \xi \right) = \check{Z}_\lambda\left( \varphi^* \check{\eta}, v, \mathcal{F}\xi \right) = \check{Z}_\lambda\left( \check{\eta}, \varphi(v), \mathcal{F}\xi \right) \\
		= Z_\lambda \left( \gamma(\lambda, \sigma)\check{\eta}, \varphi(v), \xi \right) = Z_\lambda\left( \varphi^* \gamma(\lambda, \sigma) \check{\eta}, v, \xi \right)
	\end{multline*}
	for all $\check{\eta} \in \mathcal{N}_\sigma(\check{X}^+)$, $v \in V_\pi$ and $\xi \in \Schw(X)$. Now apply Proposition \ref{prop:zeta-uniqueness}.
\end{proof}

\subsection{Consequences of the weak functional equation}
Fix an SAF representation $\pi$ of $G(\R)$ with central character. With the notations of Proposition \ref{prop:weak-LFE}, we define
\begin{equation}\label{eqn:Delta}
	\Delta_\lambda(\check{\eta}, v, \xi) := \check{Z}_\lambda(\check{\eta}, v, \mathcal{F}\xi) - Z_\lambda\left( \gamma(\pi, \lambda)(\check{\eta}), v, \xi \right)
\end{equation}
for all $\check{\eta} \in \mathcal{N}_\pi(\check{X}^+)$, $v \in V_\pi$ and $\xi \in \Schw(X)$. Note that $\Delta_\lambda(\check{\eta}, v, \cdot)$ is a meromorphic family of tempered distribution on $X$. Our Theorem \ref{prop:LFE} amounts to $\Delta_\lambda(\check{\eta}, v, \xi) = 0$, and it suffices to check this for $\lambda$ in any given open subset $U \neq \emptyset$ of $\Lambda_{\CC}$.

\begin{lemma}\label{prop:Delta-h}
	Let $U \subset \Lambda_{\CC}$ be a nonempty open subset such that
	\begin{compactitem}
		\item the closure of $U$ is compact,
		\item $U$ is disjoint from the singularities of $Z_\lambda$, $\check{Z}_\lambda$ and $\gamma(\pi, \lambda)$.
	\end{compactitem}
	For any $h \in \R[X]$ such that $\partial X = \{ x: h(x) = 0 \}$, there exists $M \in \Z_{\geq 0}$ such that
	\[ \Delta_\lambda \left( \check{\eta}, v, h^M \xi \right) = 0, \quad \lambda \in U, \]
	for all $\check{\eta}$, $v$ and $\xi$.
\end{lemma}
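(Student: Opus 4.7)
The plan is to show that $\xi \mapsto \Delta_\lambda(\check\eta, v, \xi)$ is a tempered distribution on $X(\R)$ supported in $\partial X$, of finite order bounded uniformly over $\lambda \in U$, $\check\eta \in \mathcal{N}_\pi(\check{X}^+)$ and $v \in V_\pi$; then approximate $h^M \xi$ by elements of $C_c^\infty(X^+)$ in the corresponding Schwartz seminorm, with $M$ depending only on the order.

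First, Proposition \ref{prop:weak-LFE} gives $\Delta_\lambda(\check\eta, v, \xi) = 0$ for all $\xi \in C_c^\infty(X^+)$, so the distribution in question is supported in $\partial X = \{h = 0\}$. For the uniform order bound, Theorem \ref{prop:meromorphy}(ii) together with the fact that $\mathcal{F}$ is a topological isomorphism shows that
\[ \lambda \mapsto L(\check\eta, \lambda)\, \Delta_\lambda(\check\eta, \cdot, \cdot) \; \in \; \mathrm{Bil}(V_\pi, \Schw(X)) \]
is holomorphic for each $\check\eta$. Holomorphy over the compact $\overline{U}$ renders the image equicontinuous, and since $\mathcal{N}_\pi(\check{X}^+)$ is finite-dimensional, a basis-plus-uniform-boundedness argument produces a continuous seminorm $q$ on $V_\pi$ and indices $(a, b) \in \Z_{\geq 0}^2$ with
\[ |\Delta_\lambda(\check\eta, v, \xi)| \leq q(v) \|\xi\|_{a, b} \]
for all $\lambda \in U$, $\check\eta \in \mathcal{N}_\pi(\check{X}^+)$, $v \in V_\pi$ and $\xi \in \Schw(X)$; note that $L(\check\eta, \lambda)$ is bounded below on $\overline{U}$ by the disjointness of $U$ from the poles.

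With this uniform estimate in hand, the lemma reduces to approximating $h^M \xi$ in the $\|\cdot\|_{a, b}$-seminorm by elements of $C_c^\infty(X^+)$, with $M$ depending only on $a$. To this end I would take cutoffs $\rho_\epsilon \in C^\infty(\R)$ with $\rho_\epsilon = 1$ on $[\epsilon, \infty)$, $\rho_\epsilon = 0$ on $(-\infty, \epsilon/2]$ and $|\rho_\epsilon^{(k)}| \leq C_k \epsilon^{-k}$, together with spatial truncations $\chi_R \in C_c^\infty(X(\R))$, and set
\[ g_{\epsilon, R} := \rho_\epsilon(h^2) \chi_R \cdot h^M \xi \; \in \; C_c^\infty(X^+). \]
The difference $h^M \xi - g_{\epsilon, R}$ is supported in $\{h^2 \leq \epsilon\} \cup \{\|x\| \geq R\}$. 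On $\{h^2 \leq \epsilon\}$ one has $|h|^M \leq \epsilon^{M/2}$, and a Leibniz/Faà di Bruno expansion of $x^{\underline{\beta}} \partial^{\underline{\alpha}}\bigl((1-\rho_\epsilon(h^2)) h^M \xi\bigr)$ for $|\underline{\alpha}| \leq a$, $|\underline{\beta}| \leq b$ shows that each term is bounded by $\epsilon^{(M - a)/2}$ times a polynomial factor controlled by the Schwartz decay of $\xi$; on $\{\|x\| \geq R\}$, Schwartz decay alone handles the remainder. Thus for $M > a$ the approximants converge in the $\|\cdot\|_{a, b}$-seminorm as $\epsilon \to 0^+$ and $R \to \infty$. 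Combining with $\Delta_\lambda(\check\eta, v, g_{\epsilon, R}) = 0$ (weak functional equation) and the uniform bound yields $\Delta_\lambda(\check\eta, v, h^M \xi) = 0$ as desired.

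The main obstacle is the quantitative approximation: verifying that $g_{\epsilon, R} \to h^M \xi$ in the seminorm $\|\cdot\|_{a, b}$ with a threshold on $M$ depending only on $a$, uniformly in $\xi$. This is an explicit Leibniz-type estimate, analytically standard but requiring careful bookkeeping of how the blow-up $\epsilon^{-k}$ of the cutoff's derivatives is dominated by the vanishing $\epsilon^{M/2}$ of $h^M$ on a neighborhood of $\partial X$. Once in place, the rest of the argument follows routinely from the weak functional equation and the equicontinuity extracted from Theorem \ref{prop:meromorphy}.
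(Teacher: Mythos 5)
Your proposal is correct in outline but takes a genuinely different route in both halves of the argument. For the uniform order bound, the paper does not argue by equicontinuity from holomorphy: it invokes the purpose-built Propositions \ref{prop:order-Fourier} and \ref{prop:order-estimate}, the latter of which tracks the increase of order through each shift in the Bernstein--Sato continuation procedure, for $K$-finite $v$ and $\Re(\lambda)$ in a fixed left-bounded region. Your route via Theorem \ref{prop:meromorphy}(ii) together with Banach--Steinhaus (weak boundedness on the compact $\overline{U}$ giving equicontinuity in $(V_\pi\hat{\otimes}\Schw(X))^\vee$) is softer, gets the bound for all $v \in V_\pi$ at once, and avoids reopening the Bernstein--Sato machinery; the paper's version yields more explicit control of the order. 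For the annihilation step, the paper simply cites \cite[Proposition 3.15]{Ki03}; you reprove it with explicit cutoffs $\rho_\epsilon(h^2)\chi_R$, which is more self-contained but loses the clean reference.

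Two technical slips, neither fatal. First, the claim that $L(\check{\eta},\lambda)$ is bounded below on $\overline{U}$ does not follow from $U$ avoiding the singularities of $\check{Z}_\lambda$ or $\gamma(\pi,\lambda)$: the $\Gamma^{-1}$ factors in $L$ may vanish at some $\lambda$ where the zeta integral remains regular, so the division is not justified. The fix is to drop $L$ altogether and apply the equicontinuity argument directly to the holomorphic $\mathrm{Bil}(V_\pi,\Schw(X))$-valued family $\lambda\mapsto\Delta_\lambda(\check{\eta},\cdot,\cdot)$ on an open neighborhood of $\overline{U}$ inside the pole-free region. Second, the Leibniz/Fa\`a di Bruno bookkeeping gives, for a term in which $j$ derivatives fall on $\rho_\epsilon(h^2)$ and $i$ fall on $h^M$ with $i+j\le a$, the bound $\epsilon^{-j}\cdot\epsilon^{(M-i)/2}$; the worst case $j=a$, $i=0$ yields $\epsilon^{(M-2a)/2}$, not $\epsilon^{(M-a)/2}$, so the threshold must be $M>2a$ rather than $M>a$. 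This changes only the constant, not the conclusion.
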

\begin{proof}
	Pick $c, \check{c} \in \Lambda_{\R}$ such that $\Re(\lambda) \relgeq{X} c$ and $\Re(\lambda) \relgeq{\check{X}} \check{c}$ for all $\lambda \in U$. By Propositions \ref{prop:order-Fourier} and \ref{prop:order-estimate}, there exists $(a, b) \in \Z^2_{\geq 0}$ such that $\xi \mapsto \Delta_\lambda \left( \check{\eta}, v, \xi \right)$ has order $\leq (a, b)$ whenever $\lambda \in U$. Furthermore, $\Delta_\lambda \left( \check{\eta}, v, \xi \right) = 0$ for $\xi \in C^\infty_c(X^+)$. The assertion is then well-known; see for instance \cite[Proposition 3.15]{Ki03}.
\end{proof}

Let $h \in \R[X]$, $\check{h} \in \R[\check{X}]$ be a pair of relative invariants as in Corollary \ref{prop:rel-invariant-pair}, with eigencharacters $-\theta$ and $\theta$ respectively. Upon multiplying $\check{h}, h$ by some positive real numbers, we may and do assume that
\begin{equation}\label{eqn:check-h-theta}
	h(x) = |f|^{-\theta}(x), \quad \check{h}(y) = |\check{f}|^\theta (y), \quad x \in X^+(\R), \; y \in \check{X}^+(\R).
\end{equation}

Plug the choice above of $h$ into the setting of Lemma \ref{prop:Delta-h}, and take $U$ and $M$ as in that Lemma. Take a $\check{\kappa} \in \Lambda_{\R}$ associated with $\pi$ and $(G, \check{\rho}, \check{X})$ as in Theorem \ref{prop:convergence}. Observe that for all $\check{\eta}$, $v$ and $\xi \in \Schw(X)$,
\begin{align*}
	\check{Z}_\lambda\left( \check{\eta}, v, \mathcal{F}(h^M \xi) \right) & = c(\psi)^{M \deg h} \check{Z}_\lambda\left( \check{\eta}, v, h^M (\mathcal{F}\xi) \right) \quad \because \text{Lemma \ref{prop:F-commute}} \\
	& = c(\psi)^{M \deg h} \int_{\check{X}^+(\R)} \check{\eta}(v) |\check{f}|^\lambda \cdot h^M (\mathcal{F}\xi) \quad \text{assuming}\; \Re(\lambda) \relgeq{\check{X}} \check{\kappa} \\
	& = (-c(\psi))^{M \deg h} \int_{\check{X}^+(\R)} h^M \left( \check{\eta}(v) |\check{f}|^\lambda \right) \mathcal{F}\xi \quad \\
	& \quad \because\text{integration by parts on $X(\R)$ and \eqref{eqn:derivation-Schwartz}} \\
	& = (-c(\psi))^{M \deg h} \int_{\check{X}^+(\R)} C_\lambda\left( \check{h}^M \otimes h^M \right)(\check{\eta}(v)) \cdot |\check{f}|^{\lambda - M\theta} \cdot \mathcal{F}\xi \quad \because\text{\eqref{eqn:check-h-theta}} \\
	& = (-c(\psi))^{M \deg h} \check{Z}_{\lambda - M\theta} \left( C_\lambda\left( \check{h}^M \otimes h^M \right)\check{\eta}, v, \mathcal{F}\xi \right).
\end{align*}
The first and the last terms are both meromorphic in $\lambda$ when $\check{\eta}$ is fixed (Proposition \ref{prop:Capelli-holomorphy}), hence the equality extends to all $\lambda$.

On the other hand, Corollary \ref{prop:zeta-translation} and \eqref{eqn:check-h-theta} imply
\[ Z_\lambda\left( \gamma(\pi, \lambda)\check{\eta}, v, h^M \xi\right) = Z_{\lambda - M\theta} \left( \gamma(\pi, \lambda)\check{\eta}, v, \xi \right). \]

Let the open subset $U$ be as in Lemma \ref{prop:Delta-h}. For all $\lambda \in U$ we arrive at
\begin{equation}\label{eqn:Delta-h-new}
	(-c(\psi))^{M \deg h} \check{Z}_{\lambda - M\theta} \left( C_\lambda\left( \check{h}^M \otimes h^M \right)\check{\eta}, v, \mathcal{F}\xi \right) = Z_{\lambda - M\theta} \left( \gamma(\pi, \lambda)\check{\eta}, v, \xi \right).
\end{equation}

For every $\check{\eta} \in \mathcal{N}_\pi(\check{X}^+)$, define
\begin{equation}\label{eqn:lambda-eta}
	{}^\lambda \check{\eta} := (-c(\psi))^{M \deg h} C_\lambda\left( \check{h}^M \otimes h^M \right) \check{\eta}.
\end{equation}
It is linear in $\check{\eta}$ and gives a holomorphic family (in $\lambda$) inside $\mathcal{N}_\pi(\check{X}^+)$ by Proposition \ref{prop:Capelli-holomorphy}.

\begin{lemma}\label{prop:consequence-weak-LFE}
	Take $h \in \R[X]$, $\check{h} \in \R[\check{X}]$, $\theta \in \Lambda_{\Z}$, $U \subset \Lambda_{\CC}$ and $M \in \Z_{\geq 0}$ as in the recipe above. Let
	\[ U' := U \smallsetminus \text{singularities of } Z_{\lambda - M\theta}, \check{Z}_{\lambda - M\theta}, \gamma(\pi, \lambda - M\theta) \]
	so that $U'$ is open dense in $U$. Then $\Delta_{\lambda - M\theta}\left( {}^\lambda \check{\eta}, v, \xi \right) = 0$ for $\lambda \in U'$, that is,
	\[ \check{Z}_{\lambda - M\theta} \left( {}^\lambda \check{\eta}, v, \mathcal{F}\xi \right) = Z_{\lambda - M\theta} \left( \gamma\left( \pi, \lambda - M\theta \right)( {}^\lambda \check{\eta} ), v, \xi \right), \quad \lambda \in U' , \]
	where $\check{\eta} \in \mathcal{N}_\pi(\check{X}^+)$, $v \in V_\pi$ and $\xi \in \Schw(X)$ are arbitrary.
\end{lemma}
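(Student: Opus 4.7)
The plan is to bridge between \eqref{eqn:Delta-h-new} and the desired equality via an application of the weak functional equation at the shifted parameter $\lambda - M\theta$, combined with the uniqueness of $\gamma$-factors (cf.\ Proposition \ref{prop:zeta-uniqueness}). First, since ${}^\lambda \check{\eta}$ is a bona fide element of $\mathcal{N}_\pi(\check{X}^+)$ for each $\lambda$ (holomorphic in $\lambda$ by \eqref{eqn:lambda-eta} and Proposition \ref{prop:Capelli-holomorphy}), Proposition \ref{prop:weak-LFE} applied to ${}^\lambda \check{\eta}$ at parameter $\lambda - M\theta$ yields
\[ \check{Z}_{\lambda - M\theta}\left( {}^\lambda \check{\eta}, v, \mathcal{F}\xi \right) = Z_{\lambda - M\theta}\left( \gamma(\pi, \lambda - M\theta)({}^\lambda \check{\eta}), v, \xi \right), \quad \xi \in C^\infty_c(X^+), \]
for all $\lambda \in U'$ (the exclusion of poles of $\gamma(\pi, \lambda - M\theta)$ and $\check{Z}_{\lambda - M\theta}$ being built into $U'$).

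Second, confront this identity with \eqref{eqn:Delta-h-new}: for all $\lambda \in U'$, $v \in V_\pi$ and $\xi \in C^\infty_c(X^+)$,
\[ Z_{\lambda - M\theta}\left( \gamma(\pi, \lambda)\check{\eta} - \gamma(\pi, \lambda - M\theta)({}^\lambda \check{\eta}), v, \xi \right) = 0. \]
By Proposition \ref{prop:zeta-restricted}, this integral is honestly $\int_{X^+(\R)} [\cdots](v) \cdot |f|^{\lambda - M\theta} \xi$, and since $|f|^{\lambda - M\theta}$ is a nowhere vanishing smooth function on $X^+(\R)$ while $\xi$ ranges over $C^\infty_c(X^+)$, the bracketed half-density must vanish for every $v$. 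Hence
\[ \gamma(\pi, \lambda)\check{\eta} = \gamma(\pi, \lambda - M\theta)({}^\lambda \check{\eta}) \quad \text{in } \mathcal{N}_\pi(X^+), \quad \lambda \in U'. \]

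Finally, substituting this equality of generalized matrix coefficients back into \eqref{eqn:Delta-h-new} gives the asserted identity for all $\xi \in \Schw(X)$. The main subtlety is the second step, i.e.\ promoting the integral-vanishing on $C^\infty_c(X^+)$ to pointwise identity in $\mathcal{N}_\pi(X^+)$; this is effectively the same uniqueness argument used to pin down $\gamma$ in Proposition \ref{prop:weak-LFE}(i), but one must keep track that the range of $\lambda - M\theta$ need not lie in the original domain of convergence for $Z$---fortunately, Proposition \ref{prop:zeta-restricted} makes the zeta integral honestly an integral against $|f|^{\lambda-M\theta}$ whenever $\xi \in C^\infty_c(X^+)$, irrespective of convergence.
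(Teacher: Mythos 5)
Your proposal is correct and takes essentially the same route as the paper: apply the weak functional equation (Proposition \ref{prop:weak-LFE}) to ${}^\lambda\check{\eta}$ at the shifted parameter $\lambda - M\theta$, compare the resulting identity against \eqref{eqn:Delta-h-new} on $C^\infty_c(X^+)$, and invoke uniqueness (Proposition \ref{prop:weak-LFE}(i), ultimately via Proposition \ref{prop:zeta-restricted}) to conclude $\gamma(\pi,\lambda)\check{\eta} = \gamma(\pi,\lambda - M\theta)({}^\lambda\check{\eta})$, then substitute back into \eqref{eqn:Delta-h-new}. You have merely made explicit the uniqueness step (unwinding why testing against $C^\infty_c(X^+)$ with $|f|^{\lambda-M\theta}$ non-vanishing suffices outside the range of convergence), which the paper compresses into a citation of Proposition \ref{prop:weak-LFE}(i).
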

\begin{proof}
	In view of \eqref{eqn:Delta-h-new}, \eqref{eqn:lambda-eta} and Proposition \ref{prop:weak-LFE} (i), we deduce
	\[ \gamma\left( \pi, \lambda - M\theta \right)( {}^\lambda \check{\eta} ) = \gamma\left( \pi, \lambda \right)( \check{\eta} ), \quad \check{\eta} \in \mathcal{N}_\pi(\check{X}^+) \]
	as meromorphic families in $\lambda$. Plugging this back into \eqref{eqn:Delta-h-new} yields the asserted equality.
\end{proof}

\subsection{Proof of functional equation}
Fix an SAF representation $\pi$ of $G(\R)$ with central character. Let $h \in \R[X]$, $\check{h} \in \R[\check{X}]$ be a pair of relative invariants as in Corollary \ref{prop:rel-invariant-pair}, satisfying \eqref{eqn:check-h-theta}. Consider the linear map
\[\begin{tikzcd}[row sep=tiny]
	\Phi_\lambda : \mathcal{N}_\pi(\check{X}^+) \arrow[r] & \mathcal{N}_\pi(\check{X}^+) \\
	\eta \arrow[mapsto, r] & C_\lambda\left( \check{h}^M \otimes h^M \right) \circ \eta.
\end{tikzcd}\]
It is holomorphic in $\lambda \in \Lambda_{\CC}$ (i.e.\ its matrix entries are all holomorphic if we fix a basis) by Proposition \ref{prop:Capelli-holomorphy}.

\begin{lemma}\label{prop:det-nonvanishing}
	The holomorphic function $\lambda \mapsto \det \Phi_\lambda$ on $\Lambda_{\CC}$ is not identically zero.
\end{lemma}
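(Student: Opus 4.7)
The plan is to decompose $\mathcal{N}_\pi(\check{X}^+)$ into generalized eigenspaces under the algebra $\mathcal{D}(\check{X}^+_{\CC})^{G_{\CC}}$, read off the generalized eigenvalues of $\Phi_\lambda$ on each block via Knop's Harish-Chandra isomorphism, and conclude via Proposition \ref{prop:top-nonvanishing} applied to the dual triplet $(G, \check{\rho}, \check{X})$.

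First, by Lemma \ref{prop:auto-regularity} the algebra $\mathcal{D}(\check{X}^+_{\CC})^{G_{\CC}} = \mathcal{D}(\check{X}_{\CC})^{G_{\CC}}$ is commutative and finitely generated over $\mathcal{Z}(\mathfrak{g})$. Since $\pi$ has central character, its induced action on the finite-dimensional space $\mathcal{N}_\pi(\check{X}^+)$ (Definition \ref{def:D-action-N}) factors through a finite-dimensional commutative $\CC$-algebra, whence a decomposition
\[
\mathcal{N}_\pi(\check{X}^+) = \bigoplus_{\chi} V_\chi
\]
into generalized eigenspaces indexed by finitely many infinitesimal characters $\chi \in (\rho + \mathfrak{a}_{\check{X}, \CC}^*)/\!/W_{\check{X}}$ (cf.\ Theorem \ref{prop:Knop-HC}, Remark \ref{rem:inf-char}). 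Note that $\Lambda_{\CC} = \mathbf{X}^*_{\check\rho}(G)\otimes\CC$ embeds in $\mathfrak{a}_{\check{X},\CC}^*$ via $W_{\check X}$-invariant weights, so translation by $\lambda$ is well-defined on infinitesimal characters.

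Set $D := C(\check{h}^M \otimes h^M) \in \mathcal{D}(\check{X})^G$. Expanding $|\check{f}|^{-\lambda}\circ D\circ |\check{f}|^\lambda$ on $\check{X}^+(\R)$ via Leibniz shows that, as an endomorphism of $\mathcal{N}_\pi(\check X^+)$, the family $\Phi_\lambda$ depends polynomially on $\lambda \in \Lambda_{\CC}$ of degree at most $\mathrm{ord}(D)$. On the Zariski-dense cone $\{\lambda = \sum_i 2 s_i \omega_i : s_i \in \Z_{\geq 0}\}$, the weight $|\check{f}|^\lambda = \prod_i \check{f}_i^{2s_i}$ is a positive polynomial relative invariant on $\check{X}^+(\R)$, and $\Phi_\lambda$ is the algebraic twist of $D$ in the sense of Remark \ref{rem:analytic-twist}. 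Proposition \ref{prop:HC-twist} then yields $\Phi_\lambda \in \mathcal{D}(\check{X}_{\CC})^{G_{\CC}}$ with $\mathrm{HC}(\Phi_\lambda)(x) = \mathrm{HC}(D)(x + \lambda)$; in particular $\Phi_\lambda$ preserves each $V_\chi$ and acts there as $\mathrm{HC}(D)(\chi + \lambda)\cdot\identity + N_\lambda$ with $N_\lambda$ nilpotent, giving $\det(\Phi_\lambda|_{V_\chi}) = \mathrm{HC}(D)(\chi + \lambda)^{\dim V_\chi}$. Both sides of this identity are polynomial in $\lambda$; by Zariski density of the cone they agree on all of $\Lambda_{\CC}$, whence
\[
\det\Phi_\lambda \;=\; \prod_{\chi} \mathrm{HC}(D)(\chi + \lambda)^{\dim V_\chi}
\qquad\text{as a polynomial on }\Lambda_{\CC}.
\]

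It remains to show each factor is nonzero. Let $c_D^{\mathrm{top}}$ be the top homogeneous part of $\mathrm{HC}(D) \in \CC[\rho + \mathfrak{a}_{\check X}^*]$. Applying Proposition \ref{prop:top-nonvanishing} to $(G, \check\rho, \check X)$ with the pair $(\check h^M, h^M)$ and a non-degenerate polynomial relative invariant on $\check X$ (which exists by the regularity of the dual triplet, Proposition \ref{prop:dual-triplet}), we obtain $\check\mu \in \mathbf{X}^*_{\check\rho}(G) = \mathbf{X}^*_\rho(G)$ with $c_D^{\mathrm{top}}(-\check\mu) \neq 0$. Consequently $c_D^{\mathrm{top}}$ does not vanish identically on the subspace $\Lambda_{\R} \subset \mathfrak{a}_{\check X, \R}^*$, so for each $\chi$ the polynomial $\lambda \mapsto \mathrm{HC}(D)(\chi + \lambda)$ on $\Lambda_{\CC}$ has nonzero leading term along the direction $-\check\mu$ and is therefore not identically zero. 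This gives $\det \Phi_\lambda \not\equiv 0$. The main obstacle is the middle step: justifying that the algebraic Harish-Chandra twist formula of Proposition \ref{prop:HC-twist} governs the analytic twist $\Phi_\lambda$ uniformly in $\lambda \in \Lambda_{\CC}$, but this is handled cleanly by polynomial interpolation on the integer cone.
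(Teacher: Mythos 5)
Your proposal is correct and takes essentially the same approach as the paper. Both decompose $\mathcal{N}_\pi(\check{X}^+)$ into generalized eigenspaces under the commutative algebra $\mathcal{D}(\check{X}_{\CC})^{G_{\CC}}$ (using Lemma~\ref{prop:auto-regularity}, Theorem~\ref{prop:Knop-HC} and Remark~\ref{rem:inf-char}), identify the generalized eigenvalue of the analytic twist on each block via Proposition~\ref{prop:HC-twist} together with Remark~\ref{rem:analytic-twist}, and then invoke Proposition~\ref{prop:top-nonvanishing} applied to the dual triplet to rule out identical vanishing. The one point where you deviate: the paper specializes $\lambda$ to a single integer ray $s\mu$ ($s \in \Z$) in the direction of a fixed non-degenerate relative invariant $\check{g}$ and reads off the $s\to\infty$ asymptotics $\mathrm{HC}(D)(\chi - \rho - s\mu) = s^{\deg c_D}\,c_D^{\mathrm{top}}(-\mu) + O(s^{\deg c_D - 1})$, whereas you work over the full integer cone and add the (true, but not strictly necessary) observation that $\lambda\mapsto\Phi_\lambda$ is polynomial, then conclude by Zariski density. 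Since Proposition~\ref{prop:Capelli-holomorphy} already gives holomorphy of $\lambda\mapsto\Phi_\lambda$, it would suffice to exhibit a single lattice point $\lambda_0$ in the cone with $\prod_\chi \mathrm{HC}(D)(\chi\pm\lambda_0)^{\dim V_\chi}\neq 0$, so the polynomiality claim can be dropped if one prefers. One small notational caution: be careful with the sign convention in $|\check{f}|^\lambda$ --- whether it carries eigencharacter $\lambda$ or $-\lambda$ depends on whether the exponents $\lambda_i$ are taken relative to the basis $(\omega_i)$ or $(\omega_i^{-1})$; this flips the sign in your formula $\mathrm{HC}(\Phi_\lambda)(x)=\mathrm{HC}(D)(x+\lambda)$ but does not affect the conclusion, since $c_D^{\mathrm{top}}$ is homogeneous and Proposition~\ref{prop:top-nonvanishing} gives nonvanishing at $-\mu$, hence along the whole ray $\R\mu$.
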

\begin{proof}
	Observe that the commutative $\CC$-algebra
	\[ \mathcal{D}(\check{X}^+_{\CC})^{G_{\CC}} = \mathcal{D}(\check{X}^+)^G \dotimes{\R} \CC \]
	acts on $\mathcal{N}_\pi(\check{X}^+)$ by $\check{\eta} \mapsto D_* \check{\eta} := D \circ \check{\eta}$, where $D \in \mathcal{D}(\check{X}^+_{\CC})^{G_{\CC}}$. Using Theorem \ref{prop:Knop-HC} and Remark \ref{rem:inf-char}, $\mathcal{N}_\pi(\check{X}^+)$ decomposes into joint generalized eigenspaces
	\begin{align*}
		\mathcal{N}_\pi(\check{X}^+) & = \bigoplus_\chi \mathcal{N}_\chi, \\
		\mathcal{N}_\chi & := \left\{ \check{\eta} \in \mathcal{N}_\pi(\check{X}^+) : \text{has infinitesimal character } \chi \right\}
	\end{align*}
	under $\mathcal{D}(\check{X}^+_{\CC})^{G_{\CC}} = \mathcal{D}(\check{X}_{\CC})^{G_{\CC}}$ (Lemma \ref{prop:auto-regularity}), where $\chi$ ranges over $\left( \rho + \mathfrak{a}_\mathbf{X}^* \right) /\!/ W_X$. It suffices to show that $\det \left( \Phi_\lambda \middle| \mathcal{N}_\chi \right)$ is not identically zero, for each $\chi$.

	Take a non-degenerate relative invariant $\check{g} \in \R[\check{X}]$ such that $\check{g} \geq 0$ (for example $\check{g} = \check{h}$). Multiplying by some positive constant, we may assume $\check{g} = |\check{f}|^\mu$ on $\check{X}^+(\R)$ where $\mu \in \Lambda_{\Z}$ is the eigencharacter of $\check{g}$. Set
	\[ D := C \left( \check{h}^M \otimes h^M \right). \]

	Define $D_{\check{g}, s} \in \mathcal{D}(\check{X})^G$ as in Proposition \ref{prop:HC-twist}, where $s \in \Z$. We claim that for all but finitely many $s$, we have
	\begin{equation}\label{eqn:nonvanishing}
		\det \left( (D_{\check{g}, s})_*: \mathcal{N}_\chi \to \mathcal{N}_\chi \right) \neq 0,
	\end{equation}
	This will conclude the proof since Remark \ref{rem:analytic-twist} says that $D_{\check{g}, s} = C_{s\mu}\left( \check{h}^M \otimes h^M \right)$.
	
	To show \eqref{eqn:nonvanishing}, we deduce from Proposition \ref{prop:HC-twist} that $(D_{\check{g}, s})_* \in \End_{\CC}(\mathcal{N}_\chi)$ has the generalized eigenvalue
	\begin{align*}
		\mathrm{HC}(D_{\check{g}, s})(\chi) & = \mathrm{HC}(D)(\chi - s\mu) = c_D(\chi - \rho - s\mu)  \\
		& = s^{\deg c_D} \cdot c_D^{\mathrm{top}}(-\mu) + (\text{lower terms in $s$}).
	\end{align*}
	The top homogeneous component $c_D^{\mathrm{top}}$ satisfies $c_D^{\mathrm{top}}(-\mu) \neq 0$ by Proposition \ref{prop:top-nonvanishing}, because $\check{g} \in \R[\check{X}]$ is non-degenerate. This establishes \eqref{eqn:nonvanishing}.
\end{proof}

\begin{proof}[Proof of Theorem \ref{prop:LFE}]
	Take Lemma \ref{prop:consequence-weak-LFE} as our foothold. Lemma \ref{prop:det-nonvanishing} implies that $\eta \mapsto {}^\lambda \eta$ is a linear automorphism of $\mathcal{N}_\pi(\check{X}^+)$ on an open dense subset $U'' \subset U'$. Hence for any given $\lambda \in U''$,
	\[ \Delta_{\lambda - M\theta}(\check{\eta}, v, \xi) = 0 \]
	holds for all $\check{\eta} \in \mathcal{N}_\pi(\check{X}^+)$, $v \in V_\pi$ and $\xi \in \Schw(X)$; recall that $\Delta$ is defined in \eqref{eqn:Delta}.
	
	Since $U'' - M\theta$ is open nonempty in $\Lambda_{\CC}$ and $\lambda \mapsto \Delta_\lambda(\check{\eta}, v, \xi)$ is known to be meromorphic, the equality extends to the whole $\Lambda_{\CC}$. This proves the functional equation in Theorem \ref{prop:LFE}; the remaining assertions about $\gamma$-factors are already established in Proposition \ref{prop:weak-LFE}.
\end{proof}


\bibliographystyle{abbrv}
\bibliography{Capelli}

\vspace{1em}
\begin{flushleft}
	Wen-Wei \textsc{Li} \\
	E-mail address: \href{mailto:wwli@bicmr.pku.edu.cn}{\texttt{wwli@bicmr.pku.edu.cn}} \\
	School of Mathematical Sciences, Peking University \\
	No.\ 5 Yiheyuan Road, Beijing 100871, People's Republic of China.
\end{flushleft}

\end{document}